\theoremstyle{definition}
\newtheorem{definition}{Definition}
\newtheorem{theorem}[definition]{Theorem}
\newtheorem{lemma}[definition]{Lemma}
\newtheorem{remark}[definition]{Remark}
\newtheorem{example}[definition]{Example}
\newtheorem{conjecture}[definition]{Conjecture}
\newtheorem{proposition}[definition]{Proposition}
\definecolor{q1}{RGB}{160, 0, 0}
\definecolor{q2}{RGB}{0,0,160}
\definecolor{q3}{RGB}{0, 160, 0}
\definecolor{c1}{RGB}{0, 160, 160}
\definecolor{c2}{RGB}{160, 0, 160}
\definecolor{middlecolor}{RGB}{82, 62, 62}
\newcommand{\qbarcheck}[1]{\bar{Q}_{#1}}
\newcommand{\ZZ}{\mathbf Z}
\newcommand{\CC}{\mathbf C}
\newcommand{\PP}{\mathbf P}
\newcommand{\FF}{\mathbf F}
\newcommand{\Rho}{\mathrm{P}}
\newcommand{\pp}{\mathbf{p}}
\newcommand{\bA}{\mathbf{A}}
\newcommand{\bB}{\mathbf{B}}
\newcommand{\bQ}{\mathbf{Q}}
\newcommand{\qq}{\mathbf{q}}
\newcommand{\ba}{\mathbf{a}}
\newcommand{\calG}{\mathcal G}
\newcommand{\row}[1]{\mathbf{r}_{#1}}
\newcommand{\si}{\sigma}
\newcommand{\ratquo}[3]{#1 / #2 \cong_{\textbf{Bir}} #3}
\DeclareMathOperator{\rowspan}{rowspan}
\DeclareMathOperator{\rank}{rank}
\DeclareMathOperator{\diag}{diag}
\DeclareMathOperator{\Gr}{Gr}
\DeclareMathOperator{\GL}{GL}
\DeclareMathOperator{\ED}{ED}
\newcommand{\GammaAqp}{\Gamma_{\bA, \qq , \pp}}
\newcommand{\GammaAbarqp}{\Gamma_{\bar{\bA}, \qq , \pp}}
\newcommand{\GammaAbarp}{\Gamma_{\bar{\bA} , \pp}}
\newcommand{\Gammaqbarp}{\Gamma_{\bar{\qq} , \pp}}
\newcommand{\Rhoaqp}{\Rho_{\ba, \qq, \pp}}
\newcommand{\qqbar}{\bar{\qq}}
\newcommand{\AAbar}{\bar{\bA}}
\newcommand{\Qbar}{\bar{\bQ}}
\newcommand{\Ifoc}[1]{{I}_{#1}}
\newcommand{\foc}{\Ifoc{}}
\newcommand{\sixB}{\Ifoc{6..12} (\bB)}
\newcommand{\sixBstar}{\Ifoc{6..12} ( \bB^\star)}
\newcommand{\mB}{\Ifoc{m}(\bB )}
\newcommand{\mBstar}{\Ifoc{m} (\bB^\star )}%
\crefname{assumption}{Assumption}{assumption}
\DeclareMathOperator{\vectorize}{vec}
\DeclareMathOperator{\PGL}{PGL}
\title{Algebra and Geometry of Camera Resectioning}
\author{Erin Connelly, Timothy Duff, Jessie Loucks-Tavitas}
\newcommand{\Addresses}{{
  \footnotesize
 \textsc{Department of Mathematics, University of Washington}\par\nopagebreak
   \medskip
  \textit{Email address}: E.~Connelly \texttt{erin96@uw.edu}\par
  \medskip
  \textit{Email address}: T.~Duff \texttt{timduff@uw.edu}\par
  \medskip
  \textit{Email address}: J.~Loucks-Tavitas \texttt{jaloucks@uw.edu}
  \medskip
}}
\begin{document}
\maketitle
\tableofcontents 
\begin{abstract}
We study algebraic varieties associated with the camera resectioning problem.
We characterize these resectioning varieties' multigraded vanishing ideals using Gr\"{o}bner basis techniques. 
As an application, we derive and re-interpret celebrated results in geometric computer vision related to camera-point duality.
We also clarify some relationships between the classical problems of optimal resectioning and triangulation, state
a conjectural formula for the Euclidean distance degree of the resectioning variety, and discuss how this conjecture relates to the recently-resolved multiview conjecture.
\end{abstract}

\section{Introduction}\label{sec:intro}

The \emph{dramatis personae} of the classical pinhole camera model are a full-rank $3\times 4$ matrix $A$ representing a camera, a $4\times 1$ matrix $q$ representing a world point, and a $3\times 1$ matrix $p$ representing its projection into an image.
Image formation may be understood via the projective-linear map
\begin{equation}\label{eq:cam}
\begin{split}
A : \PP^3 &\dashrightarrow \PP^2 \\
q &\mapsto A q,
\end{split}
\end{equation}
and we write $A q \sim p$ if these two vectors represent the same point in $\PP^2.$
The \emph{center} of the camera $A$ is the unique point where the map~\eqref{eq:cam} is undefined.

The pinhole camera, despite its simplicity, remains a good model of physical cameras. 
This explains its importance in modern computer vision applications such as structure-from-motion (SfM) and Simultaneous Localization and Mapping (SLAM).
On the other hand, classical problems associated with 3D reconstruction have been studied long before the advent of computers, and the role played by algebraic methods in their solution has long been apparent.
For instance, Hesse in 1863 formulated the problem of constructing two homographic configurations of 7 lines in space, each prescribed to pass through a configuration of 7 points in a plane~\cite{7pt-Hesse}.
Hesse's reduction of this problem to computing the roots of a cubic equation may be understood as an early instance of the so-called 7 point algorithm. 
Similarly, Grunert's 1841 ``3D Pothenot problem"~\cite{Grunert-1841} is known nowadays as the perspective 3-point (P3P) problem, and his general strategy reducing the problem to a quartic equation remains in use today.

In recent years, the name \emph{algebraic vision}~\cite{https://doi.org/10.48550/arxiv.2210.11443} has been coined to describe a body of interdisciplinary research in which notions from algebra and vision flow freely.
To date, algebraic vision has largely focused on problems which we refer to as the \emph{full reconstruction problem} and \emph{triangulation}.

In the full reconstruction problem, we are given a collection of image points $\tilde{p}_{1 1}, \ldots , \tilde{p}_{m n} $, and our task is to recover a set of cameras $A_1, \ldots , A_m$ and world points $q_1, \ldots , q_n$ that is consistent with these observations.
Hesse's solution treats the ``minimal" case $(m,n) = (2,7)$.
Today, there are many works which solve analogous minimal problems which can be used effectively in SfM pipelines (see eg.~\cite{DBLP:conf/cvpr/LarssonOAWKP18,PLMP,PL1P,kukelova2008automatic,larsson2017efficient}.)

In triangulation, we are given not only image points, but also the cameras that produced them, $\bar{A}_1, \ldots , \bar{A}_m$. We need only recover one or more unknown world points.
Already for $m=2,$ an exact solution to this problem will typically not exist, due to the fact that the lines in $\PP^3$ projecting to generic image points under $\bar{A}_1, \bar{A}_2$ will be skew.
Nevertheless, algebraic methods have led to a wealth of knowledge about the triangulation problem.
For example, the \emph{multiview ideal} associated to $\bar{A}_1, \ldots , \bar{A}_m$ gives rise to a complete set of algebraic constraints on \emph{any} $m$-tuple of image points they produce.
There is a considerable literature related to multiview ideals ~\cite{idealMultiview,Hilb,DBLP:conf/iccv/FaugerasM95,HA97}.
\Cref{thm:multiview-omnibus} collects some important previous results. 

Often regarded as being ``dual" to triangulation is the problem of camera \emph{resectioning}.
Here, we assume $n$ image points are given along with the configuration of world points $\bar{\qq} = (\bar{q}_1, \ldots , \bar{q}_n) \in (\PP^2)^n$ from which they were produced by a single unknown camera $A.$
Grunert's 1841 paper gives a minimal solution for $(m,n) = (1,3)$ under the assumption that $A$ is \emph{Euclidean}.
Without this assumption, $A$ is a general $3\times 4$ matrix, and we need $n\ge 6$.

\subsection{Results and Organization}\label{subsec:results-organization}

In this paper, we aim to bring the general resectioning problem up-to-speed with the latest developments in algebraic vision.
In~\Cref{sec:res-triang}, after recalling some previous results about multiview varieties, we state our first main result,~\Cref{thm:main-ideals}. 
This characterizes a complete set of algebraic constraints for the resectioning problem, under the genericity assumption that no four of the given world points are coplanar.
These constraints are given by $k$-linear polynomials for $6\le k \le 12$ which generate  the \emph{resectioning ideal} $I( \Gammaqbarp^{m,n})$ (\Cref{defn:resectioning}).
Our work is a natural continuation of recent work by Agarwal et al.~\cite{agarwal2022atlas}, and we resolve three of its open questions.
For instance,~\Cref{thm:main-ideals} resolves~\cite[\S 8.1, Q4]{agarwal2022atlas} for generic $\bar{\qq}$ by determining a universal Gr\"{o}bner basis for $I( \Gammaqbarp^{m,n}).$ 

We note that resectioning ideals have several pleasant properties from the point of view of commutative algebra: namely, for generic $\bar{\qq} \in \left( \PP^3 \right)^n$,
\begin{enumerate}
\item For fixed $m$ and $n,$ resectioning ideals are homogeneous with respect to a natural $\ZZ^{mn}$-grading, and have the same $\ZZ^{mn}$-graded Hilbert function as long as no four points are coplanar.
\Cref{prop:set-theoretic} implies that this Hilbert function may be obtained by specializing a combinatorial formula of Li~\cite[Theorem 1.1]{Li-IMRN}, based on the inclusion-exclusion rule.
Our ideal-theoretic result also considerably strengthens Li's set-theoretic description, and reduces the degrees of the equations that are needed.
\item 
The multidegrees of resectioning ideals are always equal to $1.$
A geometric explanation of this phenomenon follows along the lines explained in~\cite[\S 4]{breiding2022line}. 
See also~\cite[Theorem 4.2]{CCF23} for an explanation using multigraded Rees algebras.
\item For any monomial order $<$, the initial ideal $\operatorname{in}_< (I(\Gamma_{\bar{\qq}, \pp}^{m,n}))$ and the multigraded generic initial ideal $\operatorname{gin}_< (I(\Gamma_{\bar{\qq}, \pp}^{m,n}))$, although not equal as in the case of multiview ideals~\cite{Hilb}, are both radical.
In particular, $I(\Gamma_{\bar{\qq}, \pp}^{m,n})$ belongs to the class of \emph{Cartwright-Sturmfels ideals}, recently surveyed by Conca, De Negri, and Gorla~\cite{conca-survey}.
\end{enumerate}

Our first basic insight is that the projection of a point $q\in \PP^3$ under a pinhole camera $A : \PP^3 \dashrightarrow \PP^2$ may be viewed as the projection of a point $\vectorize (A) \in \PP^{11}$ under what we call a ``hypercamera" $Q : \PP^{11} \dashrightarrow \PP^2. $
This is reminiscent, and in fact a generalization, of a well-studied principle in computer vision known as \emph{Carlsson-Weinshall duality}~\cite{DBLP:journals/ijcv/CarlssonW98}.
This is the subject of~\Cref{sec:CWduality}.
Our~\Cref{thm:cw-main-thm} develops a reduced analogue of the ``atlas" of algebraic varieties proposed in~\cite{agarwal2022atlas}.
This addresses~\cite[\S 8.2, Q2]{agarwal2022atlas}.
The \emph{reduced joint image} and its dual, recently studied by Trager, Ponce, and Hebert, are two members of this atlas.
Carlsson-Weinshall duality amounts to a simple linear isomorphism between these two varieties.
In~\Cref{ex:hypersurface-case-1} and~\Cref{subsec:algebraic-consequences}, we explain how our perspective unifies previous approaches to resectioning constraints in the computer vision literature~\cite{DBLP:journals/ijcv/CarlssonW98, DBLP:conf/cvpr/TragerHP19,quan,DBLP:conf/eccv/SchaffalitzkyZHT00}, which can all be obtained from the ideal $I(\Gammaqbarp^{m,n})$ by specialization.

\begin{figure}[h]
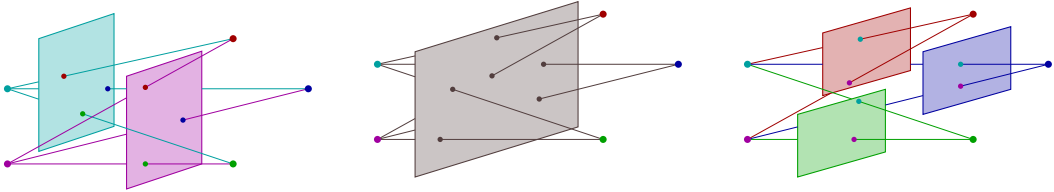

    \centering
    \include{CW-duality-illustration}
    \caption{Two reduced cameras viewing three 3D points (left) are Carlsson-Weinshall dual to three reduced cameras viewing two 3D points (right). See~\Cref{subsec:geometric-formulation} for details.}
    \label{fig:cw-duality-illustration}
\end{figure}

\Cref{thm:reduced-scheme-theoretic} in~\Cref{subsec:algebraic-consequences} shows that reduced resectioning varieties for generic point configurations are scheme-theoretically cut out by bilinear forms.
This stands in stark contrast to the high degree polynomials in~\Cref{thm:main-ideals}, whose proof we complete in~\Cref{sec:ideals}.
Finally, in~\Cref{sec:resectioning}, we address~\cite[\S 8.1, Q6]{agarwal2022atlas} by investigating the \emph{Euclidean distance degree} of the resectioning variety in affine pixel coordinates.
This is a number that quantifies the algebraic complexity of a natural Euclidean distance optimization formulation of the camera resectioning problem.
Our main contribution, based on evidence supplied by computational experiments, is~\Cref{conjecture:ED}, giving a formula for this quantity as a cubic polynomial in $n.$
The statement is analogous to, and inspired by, the \emph{multiview conjecture}, recently resolved by Maxim, Rodriguez, and Wang~\cite{MRW20}.
We conclude with a short discussion in~\Cref{sec:conclusion}.

\subsection{Notation and conventions}\label{subsec:notation}

Our notation largely follows that established in~\cite{agarwal2022atlas}.
Our basic algebro-geometric objects are affine and projective varieties over the field of complex numbers $\CC .$ The symbol $\PP^n$ denotes complex $n$-dimensional projective space, which we may also identify with the projectivization $\PP (V)$ of any $(n+1)$-dimensional complex vector space $V$.
As in the introduction, known quantities will usually be designated with a bar $\bar{\bullet}.$
This bar is also used to denote the \emph{Zariski closure} of a set: its usage will be clear from the context.
If we wish to emphasize that given quantities in certain scenarios may be ``noisy" due to deviations from the pinhole model or erroneous measurements, we instead use $\widetilde{\bullet }.$

\section{Resectioning vs Triangulation}\label{sec:res-triang}

Let us recall a ``universal" version of the imaging map~\eqref{eq:cam}.
This is the map which sends $m$ cameras $A_1, \ldots , A_m \in \PP \left( \operatorname{Hom}_\CC  (\CC^4, \CC^3) \right) \cong \PP^{11}$ and $n$ points $q_1, \ldots , q_n \in \PP^3$ to $mn$ points in $\PP^2.$
The graph of this rational map is an incidence correspondence, dubbed the \emph{image formation correspondence} in~\cite{agarwal2022atlas},
\begin{equation}\label{eq:image-formation-correspondence}
\GammaAqp^{m,n} = \overline{\{
(\bA, \qq, \pp )
\in (\PP^{11})^m \times (\PP^3)^n \times (\PP^2)^{mn}
\mid 
A_i q_j \sim p_{i j} \quad \forall i \in [m], \, j \in [n]
\}}.
\end{equation}
Given a generic camera arrangement $\bar{\bA} = (\bar{A}_1, \ldots , \bar{A}_m) \in \left( \PP^{11} \right)^m,$ one may also consider the associated \emph{multiview variety}.
In the notation of~\cite{agarwal2022atlas}, this may be defined as
\begin{equation}\label{eq:multiview-variety}
\GammaAbarp^{m,n} = \{
\pp 
\in (\PP^2)^{mn}
\mid
(\bar{\bA}, \qq , \pp ) \in \GammaAqp^{m,n}
\text{ for some }
\qq \in \left( \PP^3 \right)^n  
\}.
\end{equation}
Multiview varieties and their vanishing ideals are well-understood objects.
Our present study of camera resectioning is based on the following definition, which parallels~\eqref{eq:multiview-variety} in that the role of cameras and 3D points are switched.
\begin{definition}\label{defn:resectioning}
The $m$-camera \emph{resectioning variety} associated to a given point arrangement $\bar{\qq}\in \left(\PP^3 \right)^n$ is the multiprojective variety \begin{equation}\label{eq:resect-definition}
\Gammaqbarp^{m,n} = \left\{ \pp \in  \left( \PP^2 \right)^{mn} \mid (\bA, \bar{\qq}, \pp) \in \GammaAqp^{m,n} \text{ for some } \bA \in \left( \PP^{11} \right)^m\right\}.
\end{equation}
The vanishing ideal $I(\Gamma_{\bar{\qq}, \pp}^{m,n})$ is the \emph{resectioning ideal} of $\bar{\qq}.$ 
\end{definition}

\begin{remark}\label{remark:six-points}
It turns out that $\Gamma_{\bar{\qq}, \pp}^{m,n} = (\PP^2)^{mn}$ if and only if $n < 6,$ assuming $\bar{\qq} \in \left( \PP^3 \right)^n$ is sufficiently generic.
Thus we assume $n\ge 6$ throughout this section.
\end{remark}
To better explain the analogy between resectioning and triangulation, we collect several previous results about the multiview ideals $I(\GammaAbarp^{m,n})$ in~\Cref{thm:multiview-omnibus} below.
Our first main result,~\Cref{thm:main-ideals}, involves certain multilinear \emph{focal polynomials} which belong to the resectioning ideal $I( \Gamma_{\bar{\qq}, \pp}^{m,n})$.
These are structurally very similar to the classically-known focal polynomials belonging to $I( \Gamma_{\bar{\bA}, \pp}^{m,n})$.
We briefly recall a derivation of these constraints.
Suppose we are given a camera arrangement 
$\bar{\bA} \in \left( \PP^{11}\right)^n$. Consider a generic point 
\[
(\bar{\bA}, \qq, \pp) = (\bar{A}_1, \ldots , \bar{A}_m , q_1, \ldots , q_n, p_{1 1}, \ldots , p_{m n}) \in \GammaAqp^{m,n}.
\]
Fixing representatives for this point in homogeneous coordinates, there exist nonzero scalars $\lambda_{1 1} , \ldots , \lambda_{m n} \in \CC$ which satisfy the equations
\begin{equation}\label{eq:multiview-in-iamge-R-lambda}
\bar{A}_i q_j = \lambda_{i j} p_{i j} , \quad 1 \le i \le m, \, 1 \le j \le n.
\end{equation}
From these conditions, one may obtain certain multilinear polynomials in $\bar{A}_i, p_{i j}$ alone, known in various sources as $k$-focals or $k$-multilinearities.
Specifically, for each $j=1, \ldots , n$ and any subset $\sigma = \{ \sigma_1, \ldots , \sigma_k \} \subset [m]$ of size $\ge 2$, the matrix
\begin{equation}\label{eq:A-foca}
\begin{bmatrix} \bar{A}_{\sigma_1} & p_{\sigma_1} & \cdots & 0\\
\vdots & \vdots & \ddots & \vdots \\
\bar A_{\sigma_k} & 0 & \cdots &  p_{\sigma_k} 
\end{bmatrix}
\end{equation}
must be rank-deficient.
The maximal $(4+ k) \times (4+ k)$ minors of these matrices are the \emph{$k$-focals} associated with the camera arrangement $\bar{\bA}.$

In~\Cref{thm:multiview-omnibus}, we collect several previous results which make the relationship between $\GammaAbarqp^{m,n}$ and the $k$-focals more precise.
These results impose progressively stronger genericity assumptions on the camera arrangement $\bar{\bA}.$
\begin{theorem}\label{thm:multiview-omnibus} 
Let $\bar{\bA} = (\bar{A}_1,\ldots , \bar{A}_m)$, for $m\ge 2,$ be a fixed camera arrangement.
\begin{enumerate}
    \item \cite[Theorem 2.1]{Hilb} If all maximal $4\times 4$ minors of the matrix $\left[ \hspace{-.4em} \begin{array}{c|c|c}
\bar{A}_1^T & \cdots & \bar{A}_m^T
    \end{array} \hspace{-.4em}\right]$ are nonzero,
    then the $k$-focals for $k\in \{ 2,3,4 \}$ form a universal Gr\"{o}bner basis for $I(\GammaAbarp^{m,n}).$
    \item \cite[Theorem 3.7]{idealMultiview} If $\bar{\bA}$ is such that the camera centers are distinct, then the $k$-focals for $k \in \{ 2, 3 \}$ generate the vanishing ideal $I (\Gammaqbarp^{m,n}).$
    \item \cite[Theorem 5.6]{idealMultiview} If $\bar{\bA}$ is such that the camera centers are distinct and do not lie in a common plane, then the $2$-focals determine $\Gammaqbarp^{m,n}$ as a subscheme of $\left(\PP^2 \right)^{mn}$.
\end{enumerate}
\end{theorem}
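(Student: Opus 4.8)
Since the three parts are theorems of \cite{Hilb} and \cite{idealMultiview}, what follows is a plan for (re)proving them rather than new work. The starting point, common to all three parts, is a reduction to $n=1$: the conditions $\bar{A}_iq_j\sim p_{ij}$ cutting out $\GammaAqp^{m,n}$ involve disjoint blocks of image coordinates for distinct $j$, so under the genericity hypotheses considered here the fiber $\GammaAbarp^{m,n}$ is identified with the product $\prod_{j=1}^{n}\GammaAbarp^{m,1}$ under the obvious grouping of coordinates, and the $k$-focals of the $n$-point problem are exactly those of the single-point multiview variety $V:=\GammaAbarp^{m,1}\subseteq(\PP^2)^m$, one block of equations per $j$. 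As the multigraded ideal of such a product is generated by the ideals of the factors in their respective variable blocks --- and this is compatible with initial ideals and Gr\"{o}bner bases --- it suffices to prove each statement for $V$, an irreducible threefold whose $\ZZ^m$-graded Hilbert function is known combinatorially (e.g.\ by specializing the inclusion-exclusion formula of \cite{Li-IMRN}).

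For part (1) I would follow \cite{Hilb}. That the $k$-focals for $k\in\{2,3,4\}$ lie in $I(V)$ is immediate from the rank-deficiency of the matrix in \eqref{eq:A-foca}: a generic point of $V$ comes with scalars $\lambda_i$ satisfying $\bar{A}_iq=\lambda_ip_i$, so $(q,\lambda_{\sigma_1},\ldots,\lambda_{\sigma_k})$ is a nonzero kernel vector and every maximal minor --- i.e.\ every $k$-focal --- vanishes. To upgrade this to ``universal Gr\"{o}bner basis'', fix an arbitrary term order $<$, let $M_<$ be the monomial ideal generated by the $<$-leading terms of a suitable finite set of $k$-focals, and verify (a) $M_<\subseteq\operatorname{in}_<(I(V))$, which holds by construction, and (b) the quotient by $M_<$ has the same $\ZZ^m$-graded Hilbert function as $V$. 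Step (b) is the combinatorial heart: one identifies $M_<$ with the Stanley-Reisner ideal of a shellable simplicial complex and matches its face numbers against the Hilbert function of $V$, the strong hypothesis on the $4\times 4$ minors of $[\bar{A}_1^T\mid\cdots\mid\bar{A}_m^T]$ being exactly what makes this count uniform in $<$. Once (a) and (b) hold, equality of Hilbert functions forces $M_<=\operatorname{in}_<(I(V))$, so the chosen $k$-focals are a Gr\"{o}bner basis of $I(V)$ for every $<$; in particular $I(V)$ is a Cartwright-Sturmfels ideal in the sense of \cite{conca-survey}.

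For parts (2) and (3) I would follow \cite{idealMultiview}. Under the weaker hypothesis that the centers $c_1,\ldots,c_m$ are merely distinct, the $4$-focals are no longer needed to generate $I(V)$; to see that the $2$- and $3$-focals suffice, stratify $\PP^3$ by the incidence type of a point $q$ with the centers $c_i$ and the lines $\overline{c_ic_j}$, show that the ideal $J$ generated by these focals is radical and equidimensional of dimension $3$ and that $V(J)=V$ set-theoretically on each stratum --- distinctness of the centers being precisely what rules out spurious components --- and then invoke the Hilbert-function comparison of part (1) to conclude $J=I(V)$. For part (3), the sharper claim that the $2$-focals alone define $V$ as a subscheme, under the additional hypothesis that the centers are not coplanar, is argued more locally: at a smooth point of $V$ the Jacobian criterion shows the $2$-focals already generate the full ideal, so what remains is to exclude embedded primes and excess-dimensional behaviour elsewhere, which is where non-coplanarity of the centers enters, ruling out the planar degeneration in which the $2$-focals would be insufficient. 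In all three parts the genuine obstacle is this final move --- passing from a set-theoretic or purely numerical (Hilbert-function) statement to the assertion that the displayed polynomials carry the full saturated ideal, respectively cut out the reduced scheme --- and it is exactly this step that uses the genericity hypotheses and forces the three parts to be stated separately.
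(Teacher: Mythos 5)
The paper offers no proof of this theorem: it is an omnibus collecting three results proved elsewhere, and the text simply cites \cite[Theorem~2.1]{Hilb} for part (1) and \cite[Theorems~3.7 and~5.6]{idealMultiview} for parts (2) and (3). So there is nothing in this paper against which to compare a proof --- the question is only whether your sketch accurately reflects what those cited works do, and at the level of detail given it largely does.

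Your opening reduction is sound and is indeed the standard first move: for fixed cameras the constraints on the $n$ world points involve disjoint blocks of $\pp$-variables, so $\GammaAbarp^{m,n}\cong\prod_{j=1}^{n}\GammaAbarp^{m,1}$, and because the generators coming from distinct blocks have coprime leading terms (they live in disjoint sets of variables), a union of Gr\"{o}bner bases for the factor ideals is a Gr\"{o}bner basis for the sum with respect to \emph{any} term order, not just product orders. Your description of the strategy in \cite{Hilb} --- compare $\operatorname{in}_<$ of a candidate Gr\"{o}bner basis against a combinatorially computable $\ZZ^m$-graded Hilbert function, with the minor-genericity hypothesis making the count uniform in $<$ --- is an accurate high-level account; the paper itself alludes to this when it says the multiview ideal belongs to the Cartwright--Sturmfels class. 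Your description of \cite[Theorems~3.7, 5.6]{idealMultiview} as a stratification-plus-Jacobian argument under progressively weaker genericity is also a reasonable summary. One thing worth being more careful about in part (2): the conclusion there (that the 2- and 3-focals already \emph{generate}) does \emph{not} follow from simply matching Hilbert functions with part (1), because part (1) uses a strictly stronger hypothesis; the argument in \cite{idealMultiview} is a direct ideal-membership argument, not a specialization from the minor-generic case.

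Finally, a small point about the statement itself: as written in the paper, parts (2) and (3) refer to $\Gammaqbarp^{m,n}$, which is the resectioning variety, but from the hypotheses (a camera arrangement $\bar{\bA}$) and the cited sources it is clear they should read $\GammaAbarp^{m,n}$, the multiview variety. You read it that way, which is the intended reading, but you might flag the typo.
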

Turning now to camera resectioning, suppose we are instead given $\bar{\qq} \in \left( \PP^3\right)^n$.
Similar to~\eqref{eq:multiview-in-iamge-R-lambda}, we wish to obtain conditions involving only $\bar{q}_j$ and $p_{i j}$ from 
\begin{equation}\label{eq:in-iamge-R-lambda}
A_i \bar{q}_j = \lambda_{i j} p_{i j} , \quad 1 \le i \le m, \, 1 \le j \le n.
\end{equation}
To obtain these conditions, we may apply a well-known identity involving the matrix Kronecker product, denoted $\otimes $, and the vectorization operator $\vectorize (\bullet)$, which stacks the columns of a matrix vertically.

\begin{proposition}[See eg.~{\cite[p252, Exercise 22]{horn-topics}}]\label{prop:horn}
For any $M \in \CC^{q \times r},$ $N \in \CC^{r \times s}$,
\begin{equation}\label{eq:amazing-identity}
\vectorize \left(M  N\right) = \left(I_{s \times s} \otimes M\right) \vectorize \left(N\right),
\end{equation}
where $I_{s \times s}\in \CC^{s \times s}$ is the identity matrix.
\end{proposition}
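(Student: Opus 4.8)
The plan is to prove \eqref{eq:amazing-identity} by a direct columnwise computation; it is the $B = I_{s\times s}$ special case of the general identity $\vectorize(AXB) = (B^{T}\otimes A)\,\vectorize(X)$, but it is cleaner to argue it from scratch than to invoke the general fact. First I would write $N = \left[\,n_{1}\mid\cdots\mid n_{s}\,\right]$, where $n_{k}\in\CC^{r}$ denotes the $k$-th column of $N$, so that by the definition of the vectorization operator $\vectorize(N)\in\CC^{rs}$ is the column vector obtained by stacking $n_{1},\ldots,n_{s}$ vertically. Since matrix multiplication acts columnwise, $MN = \left[\,Mn_{1}\mid\cdots\mid Mn_{s}\,\right]$, and hence the left-hand side $\vectorize(MN)\in\CC^{qs}$ is the vertical stack of $Mn_{1},\ldots,Mn_{s}$.

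For the right-hand side, I would recall that $I_{s\times s}\otimes M$ is, by the definition of the Kronecker product, the block-diagonal matrix $\diag(M,\ldots,M)$ with $s$ copies of $M$ along the diagonal. Multiplying this block-diagonal matrix into the block vector $\vectorize(N)$, whose $k$-th block is $n_{k}$, produces the block vector whose $k$-th block is $Mn_{k}$ — exactly the left-hand side computed above. A reader preferring an entrywise verification can instead check that, for $1\le i\le q$ and $1\le k\le s$, the coordinate of $\vectorize(MN)$ in position $i+(k-1)q$ equals $(MN)_{ik}=\sum_{\ell=1}^{r}M_{i\ell}N_{\ell k}$, while the corresponding coordinate of $(I_{s\times s}\otimes M)\,\vectorize(N)$ pairs row $i$ of the $k$-th diagonal block (namely $M$) against the $k$-th block $n_{k}$ of $\vectorize(N)$, again yielding $\sum_{\ell=1}^{r}M_{i\ell}N_{\ell k}$.

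There is no substantive obstacle here: the statement is a standard bookkeeping identity (indeed it is \cite[p252, Exercise 22]{horn-topics}), and the only point demanding any care is to keep the conventions consistent — the columns of $N$ stacked vertically under $\vectorize$, and $I_{s\times s}\otimes M$ expanded so that the identity factor indexes the outer block structure — after which the indices line up on the nose. One could alternatively just cite \cite{horn-topics} and omit the proof entirely.
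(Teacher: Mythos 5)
Your proof is correct. The paper itself gives no proof of this proposition, only the citation to Horn and Johnson, so there is no argument to compare against; your columnwise computation (equivalently the entrywise check) is the standard and essentially unique elementary verification, and it correctly exploits that $I_{s\times s}\otimes M$ is the block-diagonal matrix $\operatorname{diag}(M,\ldots,M)$.
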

We apply this identity with $M = \bar{q}_j^\top$ and $N = A_i^\top$.
For the $3\times 12$ matrix $I_{3 \times 3} \otimes \bar{q}_j^\top$, we introduce the notation 
\begin{equation}\label{eq:q-check}
\bar{Q}_j := I_{3 \times 3} \otimes \bar{q}_j^\top = 
\begin{bmatrix}
\bar{q}_j^\top & 0 & 0 \\
0 & \bar{q}_j^\top &  0\\
0 & 0 & \bar{q}_j^\top 
\end{bmatrix}.
\end{equation}
Combining~\eqref{eq:in-iamge-R-lambda} and~\Cref{prop:horn}, we deduce that 
\[
\bar{Q}_j \vectorize \left(A_i^\top \right) = \lambda_{i j} p_{i j}, \quad 1 \le i \le m, \, 1 \le j \le n.
\]
Equivalently, for each $i=1, \ldots , m$ we have
\[
\begin{bmatrix}
\bar Q_1 & p_{i 1} & \cdots & 0\\
\vdots & \ddots & \vdots \\
\bar Q_n & 0& \cdots & p_{i n} 
\end{bmatrix} 
\begin{bmatrix}
\vectorize (A_i^\top) \\
-\lambda_{i 1} \\
\vdots \\ 
-\lambda_{i n}
\end{bmatrix}
=
\begin{bmatrix}
0\\
\vdots \\ 
0
\end{bmatrix}.
\]
Thus, if $\pp \in \Gammaqbarp^{m,n}$, then we have the  rank constraints
\begin{equation}\label{eq:rank-constraint}
\rank \begin{bmatrix}
\bar Q_1& p_{i 1} & \cdots & 0\\
\vdots & \vdots & \ddots & \vdots \\
\bar Q_n & 0 & \cdots & p_{i n} 
\end{bmatrix} < 12 + n.
\end{equation}
We observe that this rank constraint is equivalent to the vanishing of all maximal $(12 + n) \times (12 + n)$ minors.
These minors are homogeneous polynomials in the entries of each $\bar Q_i$ and $p_{i j}$; indeed, for any nonzero scalars $c_1, \ldots , c_n, c_1' , \ldots , c_n',$
\begin{align}
\rank 
\begin{bmatrix} c_1 \bar Q_1& c_1 ' p_{i 1} & \cdots & 0\\
\vdots & \vdots & \ddots & \vdots \\
c_n \bar Q_n & 0 & \cdots & c_n ' p_{i n} 
\end{bmatrix} 
&=
\rank 
\begin{bmatrix} \bar Q_1& c_1^{-1} p_{i 1} & \cdots & 0\\
\vdots & \vdots & \ddots & \vdots \\
\bar Q_n & 0 & \cdots & c_n^{-1} p_{i n} 
\end{bmatrix} \nonumber \\
&=
\rank 
\begin{bmatrix} \bar Q_1& p_{i 1} & \cdots & 0\\
\vdots & \vdots & \ddots & \vdots \\
\bar Q_n & 0 & \cdots & p_{i n} \label{eq:rescale}
\end{bmatrix} .
\end{align}
One may of course consider such rank constraints not only for $3\times 12$ matrices of the form~\eqref{eq:q-check}, but for any given arrangement of surjective linear maps, \[
\bar B_j : \PP^{11} \dashrightarrow \PP^2, \quad j=1, \ldots , n,
\]
represented by generic $3\times 12$ matrices.
To prevent confusion with cameras $A_i,$ we refer to each $\bar{B}_j$ as a \emph{hypercamera}.
We denote a general arrangement of hypercameras by $\bar{\bB} = (\bar{B}_1, \ldots , \bar{B}_n) \in \left( \PP^{35} \right)^n$. 
However, we instead write $\bar{\bQ}$ to denote
the special hypercamera arrangement associated to a point arrangement $\bar{\qq} \in \left( \PP^3 \right)^n$ by the rule~\eqref{eq:q-check}.

Let us also note that rank constraints analogous to~\eqref{eq:rank-constraint} hold for any subset of at least $6$ world points and their corresponding images.
This motivates the following definition, as well as the statement of our first result.

\begin{definition}\label{def:Ifoc}
Fix a hypercamera arrangement $\bar{\bB} = (\bar{B}_1, \ldots , \bar{B}_n)\in \left(\PP^{35}\right)^n$.
For any set $\{ \si_1, \ldots , \si_k \} \subset [n]$ of size $k\ge 6$ and an index $i\in [m]$, a \emph{$k$-focal} polynomial is any maximal $(12+k ) \times (12+k)$ minor of the $3k \times (12 + k)$ matrix 
\begin{equation}\label{eq:k-focal-matrix}
\begin{bmatrix}
\bar{B}_{\si_1} & p_{i \si_1} & \cdots & 0\\
\vdots & \vdots & \ddots & \vdots \\
\bar{B}_{\si_k} & 0 & \cdots & p_{i \si_k} 
\end{bmatrix} .
\end{equation}
From context, it will be clear whether ``focals" refers to the polynomials in~\Cref{def:Ifoc} or their triangulation counterparts.
The ideal in $\CC [\pp ]$ generated by all $k$-focals, $6\leq k\leq m$, is the $m$-camera \emph{focal ideal} $\Ifoc{m} (\bar{\bB}).$ 
For a given point arrangement $\qqbar$, we define its focal ideal $\Ifoc{m} (\bar{\qq})$ to be the focal ideal $\Ifoc{m} (\bar{\bQ})$ for the associated hypercamera arrangement $\Qbar$.
\end{definition}

\begin{theorem}\label{thm:main-ideals}
Let $m,n \ge 1$ be integers. 
For any point arrangement $\qqbar \in (\PP^3)^n$ such that no four points are coplanar, we have
\[
I (\Gammaqbarp^{m,n}) = \Ifoc{m} (\bar{\qq} ),
\]
and the set of all $k$-focals for $6\le k \le 12$ forms a universal Gr\"{o}bner basis for this ideal.
\end{theorem}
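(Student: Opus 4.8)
The plan is to exploit the "hypercamera" reformulation together with Carlsson--Weinshall duality to transport the already-known results about multiview ideals (\Cref{thm:multiview-omnibus}) to the resectioning setting. First I would observe that the rank constraint \eqref{eq:rank-constraint} and the $k$-focal matrices \eqref{eq:k-focal-matrix} are formally identical to the triangulation $k$-focal matrices \eqref{eq:A-foca}, with $\bar{A}_i$ replaced by $\bar{Q}_j$ and the roles of the ``camera index'' and ``point index'' exchanged. The subtlety is that $\bar{Q}_j = I_{3\times3}\otimes \bar{q}_j^\top$ is a very special $3\times 12$ matrix, not a generic one, so the genericity hypotheses in \Cref{thm:multiview-omnibus} do not apply verbatim. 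The key geometric input is therefore a dictionary: the condition ``no four of the $\bar{q}_j$ are coplanar'' in $\PP^3$ should translate, via the duality of \Cref{sec:CWduality}, into exactly the genericity condition on the dual configuration needed to invoke the strongest statement, \cite[Theorem 2.1]{Hilb}, for a universal Gröbner basis.

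The key steps, in order, would be: (1) Establish the containment $\Ifoc{m}(\bar{\qq}) \subseteq I(\Gammaqbarp^{m,n})$ --- this is essentially already done in the excerpt, since \eqref{eq:in-iamge-R-lambda} through \eqref{eq:rank-constraint} show every $k$-focal vanishes on $\Gammaqbarp^{m,n}$, and the rescaling computation \eqref{eq:rescale} shows these are genuinely multihomogeneous in the $p_{ij}$. (2) Reduce to a single camera $m=1$: since \eqref{eq:in-iamge-R-lambda} decouples over $i$, the variety $\Gammaqbarp^{m,n}$ is (the closure of) an $m$-fold fibre product of $\Gammaqbarp^{1,n}$ over the point space, and the focal ideal is generated in each block of $p$-variables separately; a Cartwright--Sturmfels / multigraded argument (cf.\ the $\operatorname{gin}$ being radical, item (3) in \S\ref{subsec:results-organization}) should let me conclude the $m$-camera ideal equals the sum of the one-camera ideals, and that a universal Gröbner basis of the sum is the union. (3) For $m=1$, translate via Carlsson--Weinshall duality (\Cref{thm:cw-main-thm}): the one-hypercamera resectioning variety $\Gamma_{\bar{\bQ},\pp}^{1,n}$ for a configuration $\bar{\qq}\in(\PP^3)^n$ with no four coplanar is linearly isomorphic to a multiview variety $\Gamma_{\bar{\bA},\pp}^{n',1}$ for a dual camera arrangement whose centres satisfy the nonvanishing-$4\times4$-minors hypothesis of \cite[Theorem 2.1]{Hilb}; the no-four-coplanar condition is precisely what makes those $4\times 4$ minors nonzero. (4) Invoke \cite[Theorem 2.1]{Hilb} to get that the $k$-focals for $k\in\{2,3,4\}$ on the dual side form a universal Gröbner basis, then push this isomorphism back to see the $k$-focals for $6\le k\le 12$ (the shift by $12 = \dim\PP^{11}+1$ is exactly the passage from cameras in $\PP^{11}$ to hypercameras in $\PP^{35}$, equivalently the jump in the rank bound from $4$ to $12$) on the resectioning side are a universal Gröbner basis. (5) Check that under this isomorphism the monomial orders correspond, so ``universal'' is preserved, and that the degree bound $k\le 12$ is sharp because $3\times 12$ matrices already have full row rank once $k$ blocks give $3k > 12+k$, i.e.\ $k>6$, but one needs up to $k=12$ blocks to be safe --- more precisely the Hilbert-function stability and the structure of the dual multiview ideal cap the needed focals at $k=12$.

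The main obstacle I expect is Step (3): verifying that the special hypercamera arrangement $\bar{\bQ}$ associated to a no-four-coplanar configuration $\bar{\qq}$ actually lands, under Carlsson--Weinshall duality, in the locus where \cite[Theorem 2.1]{Hilb} applies --- i.e., that every maximal $4\times4$ minor of the relevant dual camera matrix is nonzero. This requires an explicit computation with the duality map of \Cref{sec:CWduality}, identifying which minors of the dual correspond to which $4\times4$ minors of $[\bar{q}_1 \,|\, \cdots \,|\, \bar{q}_n]$ (or to products thereof), and confirming that ``no four coplanar'' $\iff$ ``all those minors nonzero.'' A secondary difficulty is the $m$-camera bookkeeping in Step (2): the naive fibre-product argument only gives a set-theoretic or scheme-theoretic statement immediately, and upgrading it to an equality of \emph{ideals} with an explicit universal Gröbner basis likely needs the Cartwright--Sturmfels machinery (radicality of $\operatorname{gin}$, stability of the multigraded Hilbert function) rather than a bare-hands argument --- this is where I would lean hardest on the commutative-algebra framework advertised in the introduction.
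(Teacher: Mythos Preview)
Your Steps (1), (2), and the $m>1$ reduction are fine and match the paper (for the latter the paper just notes that focals for different camera indices have disjoint $\pp$-support, so their S-polynomials reduce to zero trivially; no Cartwright--Sturmfels machinery is needed). The gap is in Step (3). Carlsson--Weinshall duality as formulated in \Cref{thm:cw-main-thm} is a \emph{birational} equivalence realised through a rational quotient by $\mathcal{G}_m=(\PGL_3)^m\times\operatorname{Stab}_{\PGL_4}(E_5)$; it is not a linear isomorphism of $(\PP^2)^n$ and does not transport Gr\"obner bases or multigraded ideal generators. What it actually identifies (\Cref{thm:reduced-scheme-theoretic}, \eqref{eq:iso-joint-image}) is the \emph{reduced} resectioning variety $\Rho_{\bar{\qq}',\pp}^{m,n}$ with the multiview variety of the special cameras $A(\bar q_j)$ from \eqref{eq:Aq}; but those cameras have many vanishing $4\times4$ minors by construction, so the hypothesis of \cite[Theorem~2.1]{Hilb} fails for them. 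Equivalently, if you try to view $\Gammaqbarp^{1,n}$ directly as the hypercamera multiview variety $\Gamma_{\bar{\bQ},\pp}^{n,1}$ and invoke a $\PP^{11}$-analogue of that theorem, the needed hypothesis is that all $12\times12$ minors of $(\bar Q_1^\top\mid\cdots\mid\bar Q_n^\top)$ are nonzero (``minor-generic'', \Cref{def:minor-generic}), and the Kronecker structure $\bar Q_j=I_3\otimes\bar q_j^\top$ forces many of these to vanish. ``No four coplanar'' is strictly weaker than minor-generic.

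The paper bridges this gap in two separate moves, neither supplied by CW duality. First, it shows (\Cref{lemma:make-minor-generic} and \Cref{lem:rowspan-uniform-2}) that no-four-coplanar implies the $\bar Q_j$ are rowspan-uniform, so a generic \emph{linear} coordinate change $\mathbf{H}=(H_1,\ldots,H_n)\in(\PGL_3)^n$ on the image factors makes $\bar{\bB}=(H_1\bar Q_1,\ldots,H_n\bar Q_n)$ minor-generic; this linear change transports both focal and vanishing ideals faithfully, reducing the equality $\foc(\bar{\qq})=I(\Gammaqbarp)$ to the minor-generic hypercamera case (\Cref{prop:vanishing-ideal-minor-generic}). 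Second, the universal Gr\"obner basis claim for the \emph{original} (pre-$\mathbf{H}$) focals is obtained by working in a symbolic ring $\CC[\bB^\star,\pp]$ that retains the Kronecker sparsity pattern \eqref{eq:Qvee-matrix}, proving the $6$--$12$ focals form a Gr\"obner basis there for product orders (\Cref{prop:gb-six-focals}(2)), and then specialising: the leading $\pp$-coefficients are products of three $4\times4$ determinants in the $\bar q_j$ (\eqref{eq:det-prod}), which are nonzero exactly under the no-four-coplanar hypothesis (\Cref{lem:gb-specialized}(2)).
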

\Cref{thm:main-ideals} is the resectioning analogue of~\Cref{thm:multiview-omnibus} part (1).
Directly adapting the proof of this result is not straightforward.
This is because $\bar{\bQ}$ is a very special hypercamera arrangement.
Nevertheless, the noncoplanarity hypothesis in~\Cref{thm:main-ideals} ensures that $\bar{\bQ}$ is generic enough for Gr\"{o}bner basis arguments to be applied.

In the setting of triangulation, we note that the range of interesting focals $2 \le k \le 4$ is much smaller than in~\Cref{thm:main-ideals}, and in this setting the $k$-focals correspond to well-understood objects in multiview geometry---namely, fundamental matrices, trifocal tensors, and quadrifocal tensors~\cite[cf.~Ch.~17]{HZ04}.
It would seem that the $k$-focals for resectioning are less well-understood.
Nevertheless, in~\Cref{ex:hypersurface-case-1},~\Cref{subsec:algebraic-consequences}, we observe that they do specialize to ``dual" multiview constraints appearing in the literature.

As a warm-up, we establish a set-theoretic variant of~\Cref{thm:main-ideals}.
By analogy with~\eqref{eq:multiview-variety}, let us define for any hypercamera arrangement $\bar{\bB} \in \left(\mathbb{P} ^{35}\right)^{n}$ the variety $\Gamma_{\bar{\bB}, \pp}^{n,m}$ to be the closed image of the associated imaging map $\left( \mathbb{P}^{11} \right)^m \dashrightarrow \left( \PP^2 \right)^{mn}.$
In other words, $\Gamma_{\bar{\bB}, \pp}^{n,m}$ is a hypercamera version of the multiview variety.
When $\bar{\bB}=\Qbar,$ we have the following result.
\begin{proposition}\label{prop:set-theoretic}
Fix $\bar{\qq} = (\bar{q}_1, \dots, \bar{q}_n) \in (\PP^3)^n$ with no four $\bar{q}_i$ coplanar. Then
\[
\Gamma_{\Qbar , \pp}^{n,m} = \Gammaqbarp^{m,n} =  \operatorname{V}(\Ifoc{m} (\qqbar )).
\]
\end{proposition}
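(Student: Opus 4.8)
The plan is to prove the two equalities in turn, treating the first one, $\Gamma_{\Qbar,\pp}^{n,m}=\Gammaqbarp^{m,n}$, as essentially a bookkeeping consequence of \Cref{prop:horn}, and reserving the real work for $\Gammaqbarp^{m,n}=\operatorname{V}(\Ifoc{m}(\qqbar))$. For the first equality I would apply \Cref{prop:horn} with $M=\bar{q}_j^\top$ and $N=A_i^\top$ to get $\bar{Q}_j\,\vectorize(A_i^\top)=\vectorize\big((A_i\bar{q}_j)^\top\big)=A_i\bar{q}_j$. Since $A_i\mapsto\vectorize(A_i^\top)$ is a linear automorphism of $\PP^{11}$, the hypercamera imaging map $(\PP^{11})^m\dashrightarrow(\PP^2)^{mn}$ attached to $\Qbar$ is, in each of its $m$ input coordinates, a linear reparametrization of the rational map $\bA\mapsto(A_i\bar{q}_j)_{i,j}$; taking closed images identifies $\Gamma_{\Qbar,\pp}^{n,m}$ with $\Gammaqbarp^{m,n}$. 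The same computation gives the easy inclusion $\Gammaqbarp^{m,n}\subseteq\operatorname{V}(\Ifoc{m}(\qqbar))$, which is already implicit in the discussion preceding \Cref{def:Ifoc}: for any $\bA$, any $i\in[m]$, and any $\sigma\subseteq[n]$ of size $k\ge6$, setting $p_{i\sigma_\ell}:=A_i\bar{q}_{\sigma_\ell}$ makes the nonzero vector $\big(\vectorize(A_i^\top),-1,\dots,-1\big)$ lie in the kernel of the matrix \eqref{eq:k-focal-matrix} with $\bar{\bB}=\Qbar$, so every $k$-focal vanishes identically on the image of the imaging map and hence on its Zariski closure $\Gammaqbarp^{m,n}$.

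For the reverse inclusion $\operatorname{V}(\Ifoc{m}(\qqbar))\subseteq\Gammaqbarp^{m,n}$ I would first reduce to $m=1$. Each $k$-focal involves only the coordinates $p_{i\bullet}$ of a single camera index, so $\Ifoc{m}(\qqbar)$ is a sum of $m$ copies of the one-camera focal ideal in disjoint blocks of variables, whence $\operatorname{V}(\Ifoc{m}(\qqbar))=\operatorname{V}(\Ifoc{1}(\qqbar))^m$; likewise $\Gammaqbarp^{m,n}=(\Gammaqbarp^{1,n})^m$ because Zariski closure commutes with finite products, so it suffices to show $\operatorname{V}(\Ifoc{1}(\qqbar))\subseteq\Gammaqbarp^{1,n}$. (If $n<6$ both sides equal $(\PP^2)^n$ by \Cref{remark:six-points}, so assume $n\ge6$.) Given $\pp=(p_1,\dots,p_n)\in\operatorname{V}(\Ifoc{1}(\qqbar))$, I would use the $n$-focals, which are among the generators of $\Ifoc{1}(\qqbar)$ by \Cref{def:Ifoc}: because $n\ge6$, the $(12+n)\times(12+n)$ minors of the $3n\times(12+n)$ matrix $M$ obtained from \eqref{eq:k-focal-matrix} with $\bar{\bB}=\Qbar$ and $\sigma=[n]$ are maximal minors, and they all vanish at $\pp$, so $\rank M<12+n$. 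Choosing a nonzero kernel vector $(v,-\mu_1,\dots,-\mu_n)$ and reading off block $j$ gives $\bar{Q}_j v=\mu_j p_j$ for all $j$. If $v=0$ then each $\mu_j p_j=0$ forces $\mu=0$, a contradiction; hence $v\ne0$, so $v=\vectorize(A^\top)$ for a unique nonzero $A\in\CC^{3\times4}$ with $A\bar{q}_j=\mu_j p_j$ for all $j$, again by \Cref{prop:horn}.

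It remains to show that $\pp$ lies in $\Gammaqbarp^{1,n}$, which by the first equality is the closed image of $A\mapsto(A\bar{q}_j)_j$; this is where the no-four-coplanar hypothesis is genuinely used. Put $S=\{\,j:\mu_j=0\,\}$, so $A\bar{q}_j=0$ for $j\in S$. If $|S|\ge4$ then, since no four $\bar{q}_j$ are coplanar, four of the vectors $\bar{q}_j$ with $j\in S$ form a basis of $\CC^4$, forcing $A=0$, a contradiction; hence $|S|\le3$. Using no-four-coplanarity and $n\ge6$ one can choose $T\subseteq[n]\setminus S$ with $|S\cup T|=4$ and $\{\bar{q}_j:j\in S\cup T\}$ linearly independent. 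Fixing lifts $\widetilde p_j\in\CC^3\setminus\{0\}$ of $p_j$ with $A\bar{q}_j=\mu_j\widetilde p_j$ for $j\notin S$, let $B$ be the unique $3\times4$ matrix with $B\bar{q}_j=\widetilde p_j$ for $j\in S$ and $B\bar{q}_j=0$ for $j\in T$. For small $\epsilon\ne0$ the matrix $A_\epsilon:=A+\epsilon B$ is nonzero, and one checks that $A_\epsilon\bar{q}_j=\epsilon\widetilde p_j\sim p_j$ for $j\in S$, that $A_\epsilon\bar{q}_j=\mu_j\widetilde p_j\sim p_j$ for $j\in T$, and that $A_\epsilon\bar{q}_j=\mu_j\widetilde p_j+\epsilon B\bar{q}_j\to\mu_j\widetilde p_j$ for $j\notin S\cup T$. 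Thus $(A_\epsilon\bar{q}_j)_j\to\pp$ in $(\PP^2)^n$ as $\epsilon\to0$, and since $(A_\epsilon\bar{q}_j)_j\in\Gammaqbarp^{1,n}$ for all small $\epsilon\ne0$ and $\Gammaqbarp^{1,n}$ is closed, we get $\pp\in\Gammaqbarp^{1,n}$. Chaining the inclusions then yields $\Gammaqbarp^{m,n}=\operatorname{V}(\Ifoc{m}(\qqbar))$.

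I expect the last step to be the main obstacle: not the computation, but the conceptual point that a point of $\operatorname{V}(\Ifoc{1}(\qqbar))$ for which the camera $A$ extracted from the $n$-focal constraint is degenerate---in the sense that some $\bar{q}_j$ lies at the center of $A$, i.e.\ $\mu_j=0$---must still lie in the resectioning variety, which forces the one-parameter degeneration above and is exactly where genericity of $\qqbar$ is used, both to bound $|S|\le3$ and to build the perturbation $B$. A secondary, more routine, obstacle is making the foundational identifications precise at the outset---that $\Gammaqbarp^{m,n}$ really is the closed image in question and that both sides split as $m$-fold products---so that only the single-camera case requires attention.
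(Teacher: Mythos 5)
Your proposal is correct and follows essentially the same strategy as the paper's proof: extract $A$ and the scalars $\mu_j$ from a kernel vector of the $n$-focal matrix, use the no-four-coplanar hypothesis to bound the degenerate index set $S$ by $|S|\le 3$, build a one-parameter perturbation of $A$ whose image converges to $\pp$, and conclude by closedness. The only cosmetic difference is that you impose the extra conditions $B\bar q_j=0$ for $j\in T$ to make the perturbing matrix unique and the convergence on $T$ exact, whereas the paper only requires the perturbing matrix $A'$ to satisfy $A'\bar q_j=p_j$ on $S$; this tidies the bookkeeping but does not change the argument.
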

\begin{proof}
It is relatively straightforward to prove the inclusions
\[
\Gamma_{\Qbar , \pp}^{n,m} \subset  \Gammaqbarp^{m,n} \subset  \operatorname{V}(\Ifoc{m} (\qqbar )),
\]
so we focus on the harder inclusion $\operatorname{V}(\Ifoc{m} (\qqbar )) \subset \Gamma_{\Qbar , \pp}^{n,m}$.
This is also where we need the noncoplanarity assumption.
Consider any point 
\[\pp \in \operatorname{V}(\Ifoc{m} (\qqbar )).\]
We will construct a sequence of points $(\pp^{(k)}) \in \Gamma_{\Qbar, \pp}^{n,m}$ converging to $\pp .$
To simplify notation in what follows, we consider the case $m=1.$
When $m>1,$ the same construction applies component-wise.
We write $p_i$ in place of $p_{1 i},$ so that the kernel of the matrix 
\[
\begin{bmatrix}
    \bar Q_1 & p_{1} & \cdots & 0\\
    \vdots & \ddots & \vdots \\
    \bar Q_n & 0& \cdots & p_{n} 
    \end{bmatrix} 
    \]
contains a point $v = 
    [ v_{1} : \cdots : v_{12+n}] \in \PP^{11 + n}$.
    Let us fix homogeneous coordinates for $p_{1}, \ldots , p_{n}, \bar{q}_1, \ldots , \bar{q}_n, v$.
    We define
    \[A = \begin{bmatrix}
    v_{1} & \cdots & v_{4} \\
    \vdots & \ddots & \vdots \\
    v_{9} & \cdots & v_{12}
    \end{bmatrix},
    \quad
    \lambda_{j} = -v_{12+j}.
    \]
Let us first observe that the matrix $A$ is nonzero, for otherwise we would have 
\[
\lambda_j p_j = A \bar{q}_j = \bar{Q}_j \vectorize (A^\top) = 0 
\quad \Rightarrow \quad 
\lambda_j = 0
\]
for all $j$, contradicting the fact that $v\ne 0.$
Next, observe that at most three of the $\lambda_j$ can be zero: otherwise, four of the points $\bar{q}_j$ would lie in some plane containing the kernel of $A$, contradicting our hypothesis that $\bar{\qq}$ is noncoplanar.
It follows that we can find a nonzero $3\times 4$ matrix $A'$ with $A' \bar{q}_j = p_j$ for each $j$ with $\lambda_j=0$.
Fix such a matrix $A'.$
We now construct the desired sequence $(\pp^{(k)})_{k\ge 1}$.
Set
\begin{align*}
A^{(k) } &= A + (1/k) A', \\
p_j^{(k)} &= A^{(k)} \bar{q}_j, \quad j=1, \ldots , n.
\end{align*}
To show convergence, note that when $\lambda_j = 0$ we have
\[
p_j^{(k)} = (1/k) A' \bar{q}_j \sim A' \bar{q}_j = p_j.
\]
For $\lambda_j \ne 0$ we attain $p_j$ in the limit as $k\to \infty $, since
\[
p_j^{(k)} = A \bar{q}_j + (1/k) A' \bar{q}_j \to A \bar{q}_j \sim p_j.
\]
In summary, we have found for any point $\pp \in \operatorname{V}(\Ifoc{m} (\qqbar ))$ a sequence of points $(\pp^{(k)}) \in \Gamma_{\Qbar, \pp}^{n,m}$ converging to $\pp .$
Since $\Gamma_{\Qbar, \pp}^{n,m}$ is closed in the Euclidean topology, we deduce the needed inclusion: $\operatorname{V}(\Ifoc{m} (\qqbar )) \subset \Gamma_{\Qbar , \pp}^{n,m}$.
\end{proof}
We conclude this section with the simplest interesting example of a resectioning variety.
\begin{example}\label{ex:hypersurface-case-1}
For $m=1$ camera and a generic point arrangement $\bar{\qq} = (\bar{q}_1, \ldots , \bar{q}_6) \in \left( \PP^3 \right)^n,$ the resectioning variety $\Gammaqbarp^{1,6}$ is a hypersurface in $\left( \PP^{2} \right)^6.$
Applying a suitable permutation to the rows of the $6$-focal matrix, this hypersurface has an $18 \times 18$ determinantal representation,
\begin{equation}\label{eq:det-hypersurface}
\det 
\begin{bmatrix}
\bar{q}_1^T & & & p_1 [1] & \\
\vdots  & & & & \ddots & \\
\bar{q}_6^T & & & & &  p_6 [1] \\
& \bar{q}_1^T& & p_1 [2] & \\
  & \vdots & & & \ddots & \\
& \bar{q}_6^T & & & &  p_6 [2] \\
& & \bar{q}_1^T &  p_1 [3] & \\
& & \vdots   & & \ddots & \\
& & \bar{q}_6^T  & & &  p_6 [3] 
\end{bmatrix}
=0.
\end{equation}
Out of the $3^6=729$ possible terms of a sextilinear form on $\left( \PP^2 \right)^6$, the special structure of the $6$-focal determinant dictates that only $\binom{6}{2} \binom{4}{2} = 90$ can be nonzero.
On the other hand,~\Cref{lemma:make-minor-generic} below shows that applying a general linear change of coordinates to~\eqref{eq:det-hypersurface} has the effect that all $729$ possible terms become nonzero.
This highlights an important distinction between resectioning and multiview ideals---the initial ideal for generic data $\qqbar $ is not the same as the $\mathbb{Z}^6$-graded Borel-fixed generic initial ideal (cf.~\cite[\S 3]{Hilb}.)
Letting $<$ denote the lexicographic order with $p_6[3] < p_6 [2] < p_6 [1] < p_5 [3] < \cdots <p_1[1],$ we have
\begin{equation}\label{eq:in-not-gin}
\begin{split}
\operatorname{in}_< ( I (\Gamma_{\bar{\qq}, \pp}^{1,6} ) = \langle 
p_{1} [1] p_{2} [1] p_{3} [2] p_{4} [2] p_{5} [3] p_{6} [3]
\rangle , \\
\operatorname{gin}_< ( I (\Gamma_{\bar{\qq}, \pp}^{1,6} ) = \langle p_{1} [1] p_{2} [1] p_{3} [1] p_{4} [1] p_{5} [1] p_{6} [1] \rangle .
\end{split}
\end{equation}
Interestingly,~\eqref{eq:det-hypersurface} also has several smaller determinantal representations.
Many of these may be obtained from~\eqref{eq:det-hypersurface} using Schur complements.
For example, we have the $12\times 12$ determinantal representation
\begin{equation}\label{eq:12-det}
\left( p_1 [3] \cdots p_6[3] \right)^{-1}
\det 
\begin{bmatrix}
p_1[3]\bar{q}_1^T & & - p_1 [1] \bar{q}_1^T \\
\vdots  &  & \vdots & \\
p_6[3] \bar{q}_6^T & & - p_6 [1] \bar{q}_6^T \\
 & p_1 [3]\bar{q}_1^T & - p_1 [2] \bar{q}_1^T \\
& \vdots  & \vdots & \\
& p_6[3]\bar{q}_6^T & - p_6 [2] \bar{q}_6^T 
\end{bmatrix}
=0.    
\end{equation}
The $6$-focal determinant also has a $6\times 6$ determinantal representation, which specializes to~\eqref{eq:dual-fundamental-1} below after fixing  $\bar{q}_1, \ldots , \bar{q}_4, p_1, \ldots , p_4.$
This is the classical form of the constraint appearing in works such as~\cite{quan,DBLP:journals/ijcv/CarlssonW98}.
Finally, we note that Schaffilitzky et al.~\cite{DBLP:conf/eccv/SchaffalitzkyZHT00} derive a $3\times 3$ determinantal constraint relating 3D points and their 2D projections that is linear in a distinguished image point $p_6$.~\Cref{thm:main-ideals} implies that their determinant is a multiple of the $6$-focal determinant.
Notably, Schaffilitzky et al.~use their constraint to solve the minimal problem of reconstructing $6$ points from $3$ views.
Earlier works, eg.~\cite{DBLP:journals/ijcv/CarlssonW98}, had already observed that this problem is equivalent to the classical $7$ point problem in $2$ views.
This equivalence follows from the principle of Carlsson-Weinshall duality, which we revisit in the next section. 
\end{example}

\section{Carlsson-Weinshall duality revisited}~\label{sec:CWduality}

Recall the image formation variety $\GammaAqp^{m,n}$  from~\eqref{eq:image-formation-correspondence}.
Previous work of Agarwal et al.~\cite{agarwal2022atlas} explains how the problems of reconstruction, triangulation, and resectioning may all be understood in terms of slicing and projection operations on this variety. 
The relationships between the varieties produced by these operations are summarized in a diagram designated as an \emph{atlas} for the pinhole camera. 
One striking feature of the atlas's appearance is the apparently symmetric roles of cameras in $\PP^{11}$ and world points in $\PP^3.$
A simple explanation for this phenomenon is as follows: for a given camera center $c\in \PP^{3},$ world point $q \in \PP^{3},$ and image plane $L \in \Gr (\PP^2, \PP^3),$ we obtain the same projected point on $L$ whether we project $c$ through $q$ or project $q$ through $c$.
If we want to express this symmetry in terms of camera matrices instead of camera centers, one approach is to introduce coordinates on the image plane.
Indeed, there are an additional $\dim \PGL_3 = 8 = 11 - 3$ degrees of freedom in choosing projective coordinates on $L$.
A particular choice of coordinates leads directly to the framework of \emph{Carlsson-Weinshall (CW) duality} from the multiview geometry literature.

In this section, we point out that several world-to-image point constraints which were previously discovered using CW duality arise naturally as specializations of our focal constraints. We also show in~\Cref{thm:cw-main-thm} that Carlsson-Weinshall duality gives rise to a rational quotient of the image formation correspondence, and develop a \emph{reduced} version of the atlas that better explains the symmetry between cameras and world points---see~\Cref{fig:reduced-atlas}.

A direct application of the focal constraints described in~\Cref{sec:res-triang}
arises naturally in the setting of Carlsson-Weinshall (CW) duality. 
In the eponymous authors' celebrated work, CW duality is described as the notion that 
``\emph{problems of [resectioning] and [triangulation] from image data are... dual in the sense that they can be solved with the same algorithm depending on the number of [world] points and cameras}''~\cite{DBLP:journals/ijcv/CarlssonW98}.

In this section, we develop CW duality in the context of a \emph{reduced atlas}, analogous to that of \cite{agarwal2022atlas}, which makes the symmetry between cameras and 3D points evident.
\Cref{thm:cw-main-thm,thm:reduced-scheme-theoretic} explain how nodes in this atlas arise as rational quotients of their non-reduced counterparts.
The latter result also includes an analogue of~\Cref{thm:main-ideals}: the reduced resectioning variety is cut out scheme-theoretically by bilinear forms for a sufficiently generic point configuration.

\begin{remark}\label{rem:coordinate-free}
Recent work by Trager, Hebert, and Ponce~\cite{DBLP:conf/cvpr/TragerHP19} demonstrates that the exact coordinates of the camera centers and world points are not essential features of CW duality, contrary to the original setup.
For simplicity, we state the main results of this section with respect to the conventional projective frame defined in~\eqref{eq:standard-position}.
\end{remark}

\subsection{Geometric formulation}\label{subsec:geometric-formulation}
For $m$ cameras and $n$ world points, we define the \emph{reduced image formation correspondence} to be the variety
\begin{equation}\label{eq:reduced-image-formation-correspondence}
\Rhoaqp^{m,n} = 
\overline{\{
(\ba, \qq, \pp )
\in (\PP^3)^m \times (\PP^3)^n \times (\PP^2)^{mn}
\mid 
A(a_i) \cdot q_j \sim p_{i j} \quad \forall i \in [m], \, j \in [n]
\}},
\end{equation}
where for $a_i = [a_{i 1} : a_{i 2} : a_{i 3} : a_{i 4} ] \in \PP^3$ we define
\begin{equation}\label{eq:Aq}
A( a_i ) = 
\begin{bmatrix}
a_{i 1} & 0 & 0 & a_{i 4}\\
0 & a_{i 2} & 0 & a_{i 4}\\
0 & 0 & a_{i 3} & a_{i 4}
\end{bmatrix}.
\end{equation}
When $A(a_i)$ is of full rank, we call it the \emph{reduced camera matrix} associated to the point $a_i.$
The center of a reduced camera matrix $A(a_i)$ is $\mathcal C(a_i)$, where $\mathcal C$ is the quadratic Cremona involution
\begin{align}
\mathcal{C} : \PP^3 &\dashrightarrow \PP^3 \nonumber\\
[a_1 : a_2 : a_3 : a_4 ] &\mapsto [1/a_1 : 1/a_2 : 1/a_3 : - 1 / a_4]. \label{eq:cremona}
\end{align}
Note that $\mathcal C(a_i)$ is defined exactly when at most one $a_{ij}$ is zero, or equivalently, when $A(a_i)$ is a full-rank camera matrix with a well-defined center.

The key observation of Carlsson-Weinshall duality is expressed by the symmetric roles of a 3D point $q_j$ and a reduced camera $A(a_i)$ in image formation:
\begin{equation}\label{eq:cw-flip}
A (a_i) q_j = A(q_j) a_i \quad \forall i=1, \ldots, m, \, \, j=1, \ldots , n.
\end{equation}
The special form of the reduced camera matrix arises from fixing a projective basis in each image and a partial projective basis in the world. 
We adopt the notation of~\cite[Ch.~16]{HZ04}:
\begin{align}
E_1 = [1 : 0: 0 : 0], &\quad e_1 = [1: 0 : 0], \nonumber \\
E_2 = [0 : 1: 0 : 0], &\quad e_2 = [0: 1 : 0], \nonumber \\
E_3 = [0 : 0: 1 : 0], &\quad e_3 = [0: 0 : 1], \nonumber \\
E_4 = [0 : 0: 0 : 1], &\quad e_4 = [1: 1 : 1], \nonumber \\
E_5 = [1 : 1: 1 : 1]. \label{eq:standard-position}
\end{align}
Each set of four points $E_1, \ldots E_4\in \PP^3$, $e_1,\ldots , e_4\in \PP^3$ is said to span a \emph{reference tetrahedron} in $\PP^3.$
The geometry relating these points and the Cremona transformation $\mathcal{C}$ can be appreciated in~\Cref{fig:twisted-cubic}.

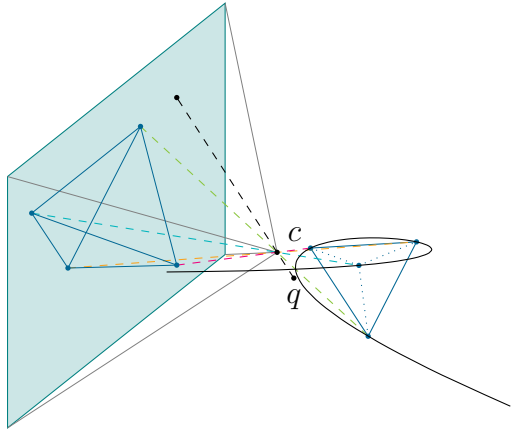
\begin{figure}[h]
\tdplotsetmaincoords{48}{220}
\centering
\begin{tikzpicture}[scale = 1, tdplot_main_coords]
    \filldraw[draw = teal, fill = teal!20] 
    (5, 2.5,1)
    -- (5, -2,1)
    -- (5, -2,-3.5)
    -- (5, 2.5,-3.5)
    -- cycle;
    \draw[very thin, gray] (2, 1/2,0) -- (5, 2.5,1); 
    \draw[very thin, gray] (2, 1/2,0) --  (5, -2,1);
    \draw[very thin, gray] (2, 1/2,0) -- (5, -2,-3.5);
    \draw[very thin, gray] (2, 1/2,0) -- (5, 2.5,-3.5);
    \draw[thin, MidnightBlue] (0,1,0) -- (1,1,1) -- (0,0,1) -- cycle; 
    \draw[MidnightBlue, dotted] (0,1,0) -- (1,0,0);
    \draw[MidnightBlue, dotted] (1,0,0) -- (1,1,1);
    \draw[MidnightBlue, dotted] (1,0,0) -- (0,0,1);

    \draw[thin, dashed, Aquamarine] (5,2,0) -- (1,0,0); 
    \draw[thin, dashed, LimeGreen] (5, -1/4, 0) -- (0,1,0); 
    \draw[thin, dashed, YellowOrange] (5, 5/4, -3/2) -- (0,0,1); 
    \draw[thin, dashed, RubineRed] (5, -1, -3) -- (1,1,1); 

    \draw[very thin, MidnightBlue] (5,2,0) -- (5, -1/4, 0) -- (5, 5/4, -3/2) -- cycle;
    \draw[very thin, MidnightBlue] (5, -1, -3) -- (5,2,0);
    \draw[very thin, MidnightBlue] (5, -1, -3) -- (5, -1/4, 0);
    \draw[very thin, MidnightBlue] (5, -1, -3) -- (5, 5/4, -3/2);

    \fill (3/2, 3/4, 0) circle (1pt) node[anchor = north]{$q$}; 
    \fill (2, 1/2, 0) circle (1pt) node[anchor = south west]{$c$}; 
    \draw[thin, dashed] (3/2,3/4,0) -- (2, 1/2, 0);
    
    \fill[MidnightBlue] (1,0,0) circle (1pt) 
    ;
    \fill[MidnightBlue] (0,1,0) circle (1pt) 
    ;
    \fill[MidnightBlue] (0,0,1) circle (1pt) 
    ;
    \fill[MidnightBlue] (1,1,1) circle (1pt) 
    ;
    
    \fill[MidnightBlue] (5,2,0) circle (1pt) 
    ;
    \fill[MidnightBlue] (5, -1/4, 0) circle (1pt) 
    ;
    \fill[MidnightBlue] (5, 5/4, -3/2) circle (1pt) 
    ;
    \fill[MidnightBlue] (5, -1, -3) circle (1pt) 
    ;

    \fill (5, -1, 0) circle (1pt) node[anchor = south]{};

    \draw[dashed, thin] (2, 1/2, 0) -- (5, -1, 0);
\draw[domain=-.4:3.5,smooth,variable=\t]plot({(\t)*(\t-2)*(2*\t-5)*(1/3)},{(\t-3)*(\t-2)*(2*\t+1)*(1/6)},{(\t)*(\t-3)*(-1/2)});
\end{tikzpicture}
\caption{Four fixed points $E_1, \ldots , E_4 \in \PP^3$ determine a reference tetrahedron.
They project through the camera center $c\in \PP^3$ to the four points $e_1, \ldots e_4\in \PP^2.$
The Cremona transformation $\mathcal{C}$ maps the line through $c$ and $q$ to the unique twisted cubic passing through $\mathcal{C} (q), \mathcal{C} (c), E_1, \ldots , E_4.$ 
}\label{fig:twisted-cubic}
\end{figure}

Given a camera of the form $A = A(a), a \in \PP^3$, we have that $A(a) E_i = e_i$ for $i = 1, \dots, 4$. 
The converse is true as well; that is, a camera matrix takes the reduced form~\eqref{eq:Aq} if and only if it sends $E_i$ to $e_i$ for each $i=1, \ldots ,4.$
As we will soon demonstrate, there is a rational group action by an algebraic group $\calG_m$ on $\GammaAqp^{m,n+4}$ for which each $\calG_m$-orbit, where defined, contains a unique element of $\Rhoaqp^{m,n}$.
That is, $\Rhoaqp^{m,n}$ can be thought of as a kind of quotient of the general image formation correspondence. \Cref{thm:cw-main-thm} makes this precise using the notion of a \emph{rational quotient.}
In this reduced setting, the roles of camera centers and world points are manifestly symmetric: a point $(\ba, \qq, \pp) \in \Rhoaqp^{m,n}$ is also a point in $\Rhoaqp^{n,m}$ after swapping the $\ba$ and $\qq$ factors. 
By this observation, we then get the isomorphism $\Rhoaqp^{m,n} \simeq \Rhoaqp^{n,m}$. 

Just as a point in $\GammaAqp^{m,n}$ can be thought of as a configuration of cameras and points, a point in $\Rhoaqp^{m,n}$ can be thought of as such a configuration up to certain coordinate changes.
More precisely, points in $\Rhoaqp^{m,n}$ correspond to orbits in $\GammaAqp^{m,n}$ under the action of a group $\mathcal{G}_m$ consisting of coordinate changes in the world and each of the $m$ images.
Up to this group action, we may assume the image planes $L_1, \ldots , L_m \in \Gr (\PP^2, \PP^3)$ are all equal, ie.~$L_1=\ldots=L_m$. This explains the center image in~\Cref{fig:cw-duality-illustration}.

We now transition into a formal treatment of the notions described above.
Define 
$\calG_m = (\PGL_3)^m \times \operatorname{Stab}_{\, \PGL_4} \left(E_5 \right)$, 
an algebraic group of dimension $8m + 12$ which acts rationally on $\GammaAqp^{m,n}$ as follows:
\begin{align} 
\calG_m \times \GammaAqp^{m,n} &\dashrightarrow \GammaAqp^{m,n} \nonumber \\
(T_1, \ldots , T_m, S) &\cdot  (A_1, \ldots , A_m, q_1, \ldots q_n, p_{1 1}, \ldots , p_{m n} ) \label{eq:act-Gm}\\
&= (T_1 A_1 S^{-1}, \ldots , T_m A_m S^{-1}, S q_1 , \ldots , S q_n, T_1 p_{1 1}, \ldots , T_m p_{m n}). \nonumber 
\end{align}
To formalize the intuition that $\Rhoaqp^{m,n}$ is a quotient of $\GammaAqp^{m,n}$ by $\mathcal{G}_m$,  we recall the definition of a \emph{rational quotient} as follows.
\begin{definition}\label{cf:rational-quotient}
(cf.~\cite[\S 6.2]{Dolgachev}.) 
Let $X$ and $Y$ be irreducible algebraic varieties and $G$ an algebraic group acting rationally on $X.$
We say $Y$ is a \emph{rational quotient} of $X$ by $G$, and write $
\ratquo{X}{G}{Y}$, if $Y$ is a model for the field of $G$-invariant rational functions on $X$: that is, if there exists an isomorphism $\CC (Y) \cong \CC (X)^G.$
\end{definition}
A classical result due to Rosenlicht states that rational quotients always exist over any algebraically closed field (cf.~\cite[Theorem 6.2]{Dolgachev}.)
The following simple lemma provides sufficient conditions for recognizing a particular class of rational quotients in which the action yields a birational equivalence of $X$ with $G \times Y.$
\begin{lemma}\label{lemma:rat-quot}
Let $G$ be an algebraic group acting rationally on a variety $X.$
For a subvariety $Y \subset X$, 
we have $\ratquo{X}{G}{Y}$ if there exists a rational map
\begin{align*}
\mu_G : X &\dashrightarrow G \\
x &\mapsto \mu_G (x)
\end{align*}
such that $\mu_G (y) = \operatorname{id}_G$ for all $y$ in a dense open subset of $Y,$ and such that $\mu_G( g \cdot x) = \mu_G ( x) \, g^{-1}$ for all $(g,x)$ in dense open subset of $G\times X.$
Moreover, these assumptions imply that 
\begin{align*}
X &\dashrightarrow G \times Y\\
x &\mapsto (\mu_G (x), \mu_G (x) \cdot x) 
\end{align*}is a birational equivalence, with a rational inverse given by
\begin{align*}
 G \times Y &\dashrightarrow X\\
(g, y) &\mapsto g^{-1} \cdot y.
\end{align*}
\end{lemma}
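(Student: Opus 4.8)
The plan is to show that the two explicit maps in the ``moreover'' clause are mutually inverse birational maps, to use them to transport the $G$-action on $X$ to a transparent action on $G \times Y$, and then to read off the $G$-invariant rational functions. Write $\nu \colon X \dashrightarrow X$ for the rational map $x \mapsto \mu_G(x)\cdot x$, and put $\phi \colon x \mapsto (\mu_G(x), \nu(x))$ and $\psi \colon (g,y) \mapsto g^{-1}\cdot y$. Every identity below is asserted only on a suitable dense open set, which is all that is needed for working with rational maps and function fields.

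I would start by extracting the structural properties of $\nu$ from the two conditions on $\mu_G$. Substituting $\mu_G(g\cdot x) = \mu_G(x) g^{-1}$ into $\nu$ gives $\nu(g\cdot x) = (\mu_G(x) g^{-1})\cdot(g\cdot x) = \mu_G(x)\cdot x = \nu(x)$, so $\nu$ is constant on $G$-orbits; the condition $\mu_G(y) = \mathrm{id}_G$ on a dense open of $Y$ gives $\nu|_Y = \mathrm{id}_Y$ there; and applying the equivariance once more, $\mu_G(\nu(x)) = \mu_G(x)\mu_G(x)^{-1} = \mathrm{id}_G$, so $\nu$ takes values in the fibre $\mu_G^{-1}(\mathrm{id}_G)$.

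The step I expect to be the main obstacle is confirming that $\phi$ really takes values in $G \times Y$, i.e.\ that $\nu(x) \in Y$ for generic $x$. This does not follow from the two conditions on $\mu_G$ alone (it can fail when $Y$ is replaced by a proper subvariety on which $\mu_G$ still restricts to $\mathrm{id}_G$), so one must use that the generic $G$-orbit meets $Y$ --- equivalently, that $\psi$ is dominant, or that $\dim Y = \dim X - \dim G$ --- which holds in the intended applications and is the precise sense in which $Y$ is a ``slice''. Granting this, for generic $x$ pick $g \in G$ and $y \in Y$ with $x = g^{-1}\cdot y$; since $x$ then lies in the orbit of $y$, the $G$-invariance of $\nu$ together with $\nu|_Y = \mathrm{id}$ gives $\nu(x) = \nu(y) = y \in Y$.

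Once $\phi$ is known to land in $G \times Y$, the rest is a short computation. For $\psi\circ\phi$, the action axiom gives $\psi(\phi(x)) = \mu_G(x)^{-1}\cdot(\mu_G(x)\cdot x) = x$. For $\phi\circ\psi$, set $x = g^{-1}\cdot y$; the equivariance applied with group element $g^{-1}$, together with $\mu_G|_Y \equiv \mathrm{id}_G$, gives $\mu_G(x) = \mu_G(y)g = g$, hence $\nu(x) = g\cdot(g^{-1}\cdot y) = y$, so $\phi(\psi(g,y)) = (g,y)$. Thus $\phi$ and $\psi$ are mutually inverse birational maps, establishing the ``moreover'' claim. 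Finally I transport the action: if $\phi(x) = (g,y)$ then $\phi(h\cdot x) = (\mu_G(h\cdot x), \nu(h\cdot x)) = (\mu_G(x) h^{-1}, \nu(x)) = (g h^{-1}, y)$, so across $\phi$ the $G$-action becomes $h\cdot(g,y) = (g h^{-1}, y)$: right translation on the $G$-factor, trivial on $Y$. Hence $\CC(X)^G \cong \CC(G\times Y)^G$, and since a rational function on $G\times Y$ invariant under all right translations in the first coordinate is independent of that coordinate (a right-translation-invariant rational function on a group is constant), $\CC(G\times Y)^G = \CC(Y)$. This gives $\CC(X)^G \cong \CC(Y)$, i.e.\ $\ratquo{X}{G}{Y}$.
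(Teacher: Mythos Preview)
Your argument is correct, and you have put your finger on a real issue: the stated hypotheses do not by themselves force $\nu(x) = \mu_G(x)\cdot x$ to land in $Y$, only in the fibre $\mu_G^{-1}(\mathrm{id}_G)$. The paper's own proof glosses over exactly this point---its map $\varphi^*\colon \CC(Y)\to\CC(X)^G$, sending $f$ to the function $x\mapsto f(\mu_G(x)\cdot x)$, tacitly assumes $\mu_G(x)\cdot x\in Y$ without verification. Your proposed remedy (dominance of $\psi$, i.e.\ that generic orbits meet $Y$) is the right additional hypothesis and does hold in the paper's applications, where $Y$ is constructed precisely to be such a slice.

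Setting this shared gap aside, your route differs from the paper's in the order of operations. The paper establishes the field isomorphism $\CC(Y)\cong\CC(X)^G$ directly, by writing down $\varphi^*$ as above and the restriction map $\iota^*\colon\CC(X)^G\to\CC(Y)$ and checking they are mutual inverses; it then asserts that the birational equivalence with $G\times Y$ ``follows similarly.'' You instead establish the birational equivalence first, transport the $G$-action across $\phi$ to $G\times Y$ where it becomes right translation on the first factor, and read off the invariants as $\CC(Y)$. Your approach makes the ``moreover'' clause the engine rather than an afterthought, and has the advantage of explaining structurally why $\CC(X)^G\cong\CC(Y)$ rather than merely verifying two composites are identities.
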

\begin{proof}
A function $f\in \CC (Y)$ pulls back to a function $h \in \CC (X)$ defined by $h(x) = f(\mu_G (x) \cdot x) $ on $X$.
Our assumptions imply $h$ is $G$-invariant, since
\[
h(g\cdot x ) = f( \mu_G (g\cdot x)  \cdot (g\cdot x) ) 
= f\left( (\mu_G (x) g^{-1}) \cdot (g\cdot x) \right) = h(x).
\]
Let us write $\varphi^* : \CC (Y) \to \CC (X)^G.$
Since $Y \subset X,$ we also have the induced map $\iota^*: \CC (X)^G \to \CC (Y).$
We show that $\varphi^* $ and $\iota^* $ are mutual inverses.
Taking any function $f \in \CC (Y)$ and $y\in Y$ in its domain of definition, we calculate
\[
\iota^* \varphi^* f (y) = f ( \mu_G (y) \cdot y ) = f(y) .
\]
Similarly, for any fixed $f \in \CC (X)^G,$ the values $f(x)$ and $f( \mu_G (x) \cdot x)$ are defined for a dense open subset of $x \in X,$ for which we compute
\[
\varphi^* \iota^* f (x) = f ( \mu_G (x) \cdot x ) = f(x).
\]
This proves $\ratquo{X}{G}{Y}.$ 
The birational equivalence of $G \times Y$ and $X$ follows similarly.
\end{proof}

\begin{theorem}[CW duality]\label{thm:cw-main-thm}
For any $m,n \geq 0$, we have a birational equivalence of varieties
\[
\GammaAqp^{m,n+4} \cong_{\textrm{Bir}} 
\Rhoaqp^{m,n} \times 
\mathcal{G}_m ,
\]
which yields following commutative diagram (in which
each arrow labeled $\sim$ is a birational or biregular isomorphism.)
\[
\begin{tikzcd}[ampersand replacement=\&]
\GammaAqp^{m,n+4} \times (\PGL_3)^n \arrow[r, dashed, "\sim"] \arrow[d, dashed, "\rotatebox{90}{$\sim$}"] \& \GammaAqp^{n,m+4} \times (\PGL_3)^m \arrow[d, "\rotatebox{90}{$\sim$}", dashed]\\
(\Rhoaqp^{m,n} \times \calG_m) \times (\PGL_3)^n \ar[dashed, r, "\sim"] \ar[d] \& (\Rhoaqp^{n,m} \times \calG_n) \times (\PGL_3)^m \ar[d] \\
\Rhoaqp^{m,n} \ar[r, "\sim"] \& \Rhoaqp^{n,m}
\end{tikzcd}
\]
This diagram has the following 
additional properties:
\begin{enumerate}
\item If $\nu_{m,n}$ denotes any of the horizontal maps, we have $\nu_{n,m} \circ \nu_{m,n} = \operatorname{id}$ wherever both maps are defined.
\item The vertical maps express the reduced image formation variety as a rational quotient of the image formation correspondence, \begin{equation}\label{eq:rat-quot}
\ratquo{\GammaAqp^{m,n+4}}{\mathcal{G}_m}{\Rhoaqp^{m,n}}.
\end{equation}
\item The duality between the problems of exact resectioning and triangulation may be expressed in terms of this commutative diagram and certain projections: eg., for the bottom row, if $\pi_{\ba}', \pi_{\qq}'$ denote the projections from $\Rhoaqp$ that forget the $\ba$ and $\qq$ factors, then the diagram below commutes.
\begin{center}
\begin{tikzcd}[ampersand replacement=\&, column sep = small]
\& (\PP^3)^n \times (\PP^2)^{mn} \\
\Rhoaqp^{m,n} \ar[rr, "\sim"] \ar[rd, "\pi_{\qq}'"'] \ar[ru, "\pi_{\ba} '"] \&  \& \Rhoaqp^{n,m} \ar[ld, "\pi_{\ba}'"] \ar[lu, "\pi_{\qq}' "']\\
\& (\PP^3)^m \times (\PP^2)^{mn}
\end{tikzcd}
\end{center}
\end{enumerate}
\end{theorem}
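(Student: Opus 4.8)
The plan is to obtain the birational equivalence $\GammaAqp^{m,n+4}\cong_{\textrm{Bir}}\Rhoaqp^{m,n}\times\calG_m$ as a direct application of~\Cref{lemma:rat-quot}, via an explicit ``normalizing'' rational map onto $\calG_m$; the commutative diagram and properties (1)--(3) then follow by combining this equivalence with the elementary factor-swap isomorphism $\Rhoaqp^{m,n}\cong\Rhoaqp^{n,m}$. Here $\GammaAqp^{m,n+4}$ and $\Rhoaqp^{m,n}$ are irreducible, being Zariski closures of graphs of rational maps out of $(\PP^{11})^m\times(\PP^3)^{n+4}$ and $(\PP^3)^m\times(\PP^3)^n$ respectively.

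Distinguish four of the $n+4$ world points, say $q_{n+1},\dots,q_{n+4}$. Define $\mu_{\calG_m}\colon\GammaAqp^{m,n+4}\dashrightarrow\calG_m$ by $(\bA,\qq,\pp)\mapsto(T_1,\dots,T_m,S)$, where $S\in\operatorname{Stab}_{\PGL_4}(E_5)$ is the unique element with $Sq_{n+j}=E_j$ $(1\le j\le4)$ and $T_i\in\PGL_3$ is the unique element with $T_ip_{i,n+j}=e_j$ $(1\le j\le4)$. Since the projection $\GammaAqp^{m,n+4}\to(\PP^{11})^m\times(\PP^3)^{n+4}$ is dominant, for generic $(\bA,\qq,\pp)$ the tuple $(q_{n+1},\dots,q_{n+4},E_5)$ is a projective frame of $\PP^3$ and each $(p_{i,n+1},\dots,p_{i,n+4})=(A_iq_{n+1},\dots,A_iq_{n+4})$ is a projective frame of $\PP^2$; since $\PGL_4$ and $\PGL_3$ act simply transitively on projective frames, $S$ and the $T_i$ exist, are unique, and depend rationally on the input, so $\mu_{\calG_m}$ is a rational map. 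Let $\iota\colon\Rhoaqp^{m,n}\hookrightarrow\GammaAqp^{m,n+4}$ be the injective morphism appending $E_1,\dots,E_4$ to the world points and $e_1,\dots,e_4$ to the $m$ tuples of images while sending $a_i\mapsto A(a_i)$, and set $Y=\iota(\Rhoaqp^{m,n})$, so that $\CC(Y)\cong\CC(\Rhoaqp^{m,n})$. On $Y$ the distinguished world points are the $E_j$ and their images are the $e_j$, whence $S$ and every $T_i$ fix a full projective frame and therefore equal the identity; thus $\mu_{\calG_m}|_Y=\operatorname{id}_{\calG_m}$. The equivariance hypothesis of~\Cref{lemma:rat-quot} holds because, by~\eqref{eq:act-Gm}, acting by $g=(T_1',\dots,T_m',S')$ replaces the distinguished world points with $S'q_{n+j}$ and their images with $T_i'p_{i,n+j}$, so by uniqueness the normalizing maps of $g\cdot x$ are $SS'^{-1}$ and $T_iT_i'^{-1}$; that is, $\mu_{\calG_m}(g\cdot x)=\mu_{\calG_m}(x)\,g^{-1}$. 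Finally, for generic $x$ the camera $T_iA_iS^{-1}$ sends $E_k=Sq_{n+k}$ to $T_ip_{i,n+k}=e_k$ for $k=1,\dots,4$, so by the characterization of reduced cameras (a $3\times4$ matrix is of the form $A(a)$ exactly when it sends $E_k$ to $e_k$, $k=1,\dots,4$) it equals $A(a_i)$ for some $a_i\in\PP^3$; hence $\mu_{\calG_m}(x)\cdot x\in Y$. Applying~\Cref{lemma:rat-quot} with $X=\GammaAqp^{m,n+4}$, $G=\calG_m$ now yields both~\eqref{eq:rat-quot} (property (2)) and the equivalence $\GammaAqp^{m,n+4}\cong_{\textrm{Bir}}\calG_m\times\Rhoaqp^{m,n}$.

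Next, $\sigma_{m,n}\colon\Rhoaqp^{m,n}\to\Rhoaqp^{n,m}$, $(\ba,\qq,\pp)\mapsto(\qq,\ba,\pp')$ with $p'_{ji}:=p_{ij}$, is merely a permutation of the factors of the ambient product of projective spaces; by the flip identity~\eqref{eq:cw-flip}, $A(a_i)q_j=A(q_j)a_i$, it carries the defining locus of $\Rhoaqp^{m,n}$ onto that of $\Rhoaqp^{n,m}$, so it is a biregular involution. Combining $\sigma_{m,n}$ with the tautological identification $\calG_m\times(\PGL_3)^n=(\PGL_3)^m\times\operatorname{Stab}_{\PGL_4}(E_5)\times(\PGL_3)^n\cong(\PGL_3)^n\times\operatorname{Stab}_{\PGL_4}(E_5)\times(\PGL_3)^m=\calG_n\times(\PGL_3)^m$ (the extra $\PGL_3$-factors carry the eight coordinate-choice degrees of freedom per image, balancing the dimension count) gives the biregular middle horizontal map $\tau_{m,n}$, and $\nu_{m,n}:=\beta_{n,m}^{-1}\circ\tau_{m,n}\circ\beta_{m,n}$ is the top horizontal map, where $\beta_{m,n}$ is the birational equivalence of the preceding paragraph with $(\PGL_3)^n$ adjoined and factors reordered to $(\Rhoaqp^{m,n}\times\calG_m)\times(\PGL_3)^n$. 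Both squares commute by construction (the top square by the definition of $\nu_{m,n}$, the bottom square because $\tau_{m,n}$ acts as $\sigma_{m,n}$ on the $\Rhoaqp$-factor and independently on the group factors), which also proves property (1): $\nu_{n,m}\circ\nu_{m,n}=\beta_{m,n}^{-1}\circ(\tau_{n,m}\circ\tau_{m,n})\circ\beta_{m,n}=\operatorname{id}$ since a factor swap is an involution. For property (3), $\pi_{\ba}'$ on $\Rhoaqp^{m,n}$ returns $(\qq,\pp)$ while $\pi_{\qq}'\circ\sigma_{m,n}$ returns $(\qq,\pp')$, and these agree under the relabeling $(\PP^2)^{nm}\cong(\PP^2)^{mn}$; the lower triangle is symmetric and returns $(\ba,\pp)$. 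The resulting commuting triangle of projections is exactly CW duality: the fibre of $\pi_{\ba}'$ over a resectioning datum $(\qq,\pp)$ is the set of reduced-camera solutions, the fibre of $\pi_{\qq}'$ over a triangulation datum $(\ba,\pp)$ is the set of point solutions, and $\sigma_{m,n}$ matches them. The degenerate cases $m=0$ or $n=0$ go through verbatim, reading $(\PGL_3)^0$ as trivial and omitting the absent $\Rhoaqp$-factor.

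The substance of the argument lies in the second paragraph. The delicate step is verifying that $\mu_{\calG_m}$ is defined on a dense open set---the genericity claim that a generic configuration restricts to a projective frame in the world (together with $E_5$) and a projective frame in every image---and that $\mu_{\calG_m}(x)\cdot x$ really returns to $\iota(\Rhoaqp^{m,n})$, which is precisely where the ``only if'' half of the reduced-camera characterization is used. Everything downstream---the swap $\sigma_{m,n}$, its involutivity, and the assembly of the two commuting squares---is formal bookkeeping.
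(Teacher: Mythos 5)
Your proposal is correct and follows the paper's own strategy: apply \Cref{lemma:rat-quot} with $X=\GammaAqp^{m,n+4}$, $Y=\iota(\Rhoaqp^{m,n})$, $G=\calG_m$, then assemble the diagram and properties (1)--(3) from the factor-swap involution $\sigma_{m,n}$. The only superficial differences are that you obtain the normalizing map $\mu_{\calG_m}$ abstractly from simple transitivity of $\PGL$ on projective frames (normalizing the last four distinguished points rather than the first four), whereas the paper writes explicit Cramer's-rule formulas for $S$ and $T$, and you verify explicitly that $\mu_{\calG_m}(x)\cdot x$ lands in $Y$---a point the paper's invocation of \Cref{lemma:rat-quot} leaves implicit.
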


\begin{proof}
We begin by constructing the maps that yield the rational quotient~\eqref{eq:rat-quot}.
This part follows by applying~\Cref{lemma:rat-quot} with $X = \GammaAqp^{m,n+4} $, $Y = \Rhoaqp^{m,n}$, and $G = \mathcal{G}_m.$
To obtain the inclusion $\Rhoaqp^{m,n} \subset \GammaAqp^{m,n+4}$ we define
\begin{align*}
\iota : \Rhoaqp^{m,n} &\to \GammaAqp^{m,n+4}\\
(a_1, \ldots , a_m, q_1, \ldots , q_n , p_{1 1}, \ldots , p_{m n}) &\mapsto \\
(A(a_1), \ldots , A(a_m), E_1, E_2, E_3, E_4, &\, q_1, \ldots , q_n , e_1, e_2, e_3, e_4, p_{1 1}, \ldots , p_{m n}).
\end{align*}
To construct the map $\mu_{\mathcal{G}_m} : \GammaAqp^{m,n+4} \dashrightarrow \mathcal{G}_m$, consider first the map
\begin{align*}
S : (\PP^3)^4 &\dashrightarrow \operatorname{Stab}_{\PGL_4}(E_5) \\
( q_1, \ldots , q_4) &\mapsto 
\bigg( 
\left[\hspace{-.2em}
\begin{array}{c|c|c|c}
q_1 & q_2 & q_3 & q_4
\end{array}\hspace{-.2em}
\right]
\cdot 
\diag \left( [5 2 3 4]_\qq, [1 5 3 4]_\qq, [1 2 5 4]_\qq , [1 2 3 5]_\qq \right)
\bigg)^{-1} ,
\end{align*}
where each $[5 2 3 4]_\qq, \ldots , [1 2 3 5]_\qq$ is the determinant of a matrix obtained by replacing $q_1, \ldots , q_4$ with $E_5$ in the $4\times 4$ matrix whose columns are $q_1, \ldots , q_4.$
We verify that $S(q_1,\ldots , q_4)$ is well-defined and contained in $\operatorname{Stab}_{\PGL_4}(E_5)$ using linear algebra. 
To ease notation,
we write $Q = \left[\hspace{-.2em}
\begin{array}{c|c|c|c}
q_1 & q_2 & q_3 & q_4
\end{array}\hspace{-.2em}
\right]$ and $D = \diag \left( [5 2 3 4]_\qq, [1 5 3 4]_\qq, [1 2 5 4]_\qq , [1 2 3 5]_\qq \right)$. 
Rescaling any of the $q_1, \ldots , q_4$ then rescales the matrix product $Q D$.
Using
Cramer's rule, we calculate that
\begin{align*}
S(q_1, \ldots , q_4) E_5 &= 
D^{-1} \cdot Q^{-1} E_5 \\
&= 
D^{-1} [
[5 2 3 4]_\qq : [1 5 3 4]_\qq : [1 2 5 4]_\qq : [1 2 3 5]_\qq ]\\
&= E_5.
\end{align*}
An analagous calculation can be used to verify that for any $S_0 \in \PGL_4$ we have
\begin{equation}\label{eq:T-act}
S( S_0 q_1, \ldots , S_0 q_4) = S(q_1, \ldots ,q_4) \, S_0^{-1}.    
\end{equation}
Similar to our definition of $S$ above, we may define a map
\begin{align*}
T : (\PP^2)^4 &\dashrightarrow \PGL_3 \\
( p_1, \ldots , p_4) &\mapsto 
\bigg( 
\left[\hspace{-.2em}
\begin{array}{c|c|c}
p_1 & p_2 & p_3
\end{array}\hspace{-.2em}
\right]
\cdot 
\diag \left( [4 2 3]_\pp, [1 4 3]_\pp, [1 2 4]_\pp \right) 
\bigg)^{-1},
\end{align*}
but we replace $p_1, \ldots ,p_3$ by $p_4$ (rather than $e_4$) when forming the expressions $[4 2 3]_\pp, \ldots , [1 2 4]_\pp.$
Once again, for $T_0 \in \PGL_3$ we have 
\begin{equation}\label{eq:S-act}
T( T_0 p_1, \ldots , T_0 p_4) = T(p_1, \ldots ,p_4) \, T_0^{-1}.    
\end{equation}
Finally, we define
\begin{align*}
\mu_{\mathcal{G}_m} : \GammaAqp^{m,n+4} &\dashrightarrow \mathcal{G}_n \\
(A_1, \ldots ,A_m, q_1, \ldots , q_{n+4} , p_{1 1} , \ldots , p_{m n}) &\mapsto \\
(T(p_{1 1}, &\ldots , p_{1 4}), \, \ldots ,\,  T(p_{1 1}, \ldots , p_{m 4}) , \, S(q)).
\end{align*}
We check that the two assumptions of~\Cref{lemma:rat-quot} are satisfied.
The map $\mu_{\mathcal{G}_m}$ fixes $\Rhoaqp^{m,n}$ pointwise since $T(e_1, e_2, e_3, e_4)$ and $S(E_1, E_2, E_3, E_4)$ both act as the identity.
Similarly, for sufficiently generic $g\in \mathcal{G}_m$ and $x\in \GammaAqp^{m,n+4}$ the assumption that $\mu_{\mathcal{G}_m} ( g\cdot x) = \mu_{\mathcal{G}_m} (x) \, g^{-1}$ follows from~\eqref{eq:T-act} and~\eqref{eq:S-act}.

Thus, we may conclude from~\Cref{lemma:rat-quot} that we have the rational quotient~\eqref{eq:rat-quot}, giving property 2 in the statement of the theorem. 
Moreover, the lemma implies that $\GammaAqp^{m,n+4} $ is birationally equivalent to $\Rhoaqp^{m,n} \times \mathcal{G}_m$, which allows us to define the vertical maps in the main diagram.
To complete the diagram, it suffices to define the bottom-most map, which is
\begin{align*}
\nu_{m,n} : \Rhoaqp^{m,n} &\to \Rhoaqp^{n,m} \\
(a_1,\ldots , a_m , q_1, \ldots , q_n, p_{1 1} , \ldots p_{m n}) &\mapsto (q_1,\ldots , q_n , a_1, \ldots , a_m, p_{1 1} , \ldots p_{n m}). 
\end{align*}
Now, to show that $\nu_{m,n}$ is an isomorphism, we use the symmetric equations~\eqref{eq:cw-flip}.
The remaining parts of the theorem now follow easily.
\end{proof}
\begin{figure}[h]
\centering 
\begin{tikzcd}[]
& & 
{\Rho_{\bar{\ba}, \qq,\pp}^{m,n} 
\arrow[dl] 
\arrow[from=dr, color=red]
} 
& 
{
\Rho_{\ba, \qq, \bar{\pp}}^{m,n} 
}
& 
{\Rho_{\ba, \bar{\qq},\pp}^{m,n} 
\arrow[dr] 
}
& & 
\\
&
{
\Rho_{\bar{\ba}, \pp}^{m,n}
} 
& 
\Rho_{\ba, \bar{\pp}}^{m,n} \arrow[from=ur, crossing over]
& 
{
\Rho_{\ba, \qq, \pp}^{m,n} 
\arrow[dr] 
\arrow[dl]
\arrow[u,color=red, crossing over]
\arrow[ur,color=red, crossing over]
}
&
{\Rho_{\qq,\bar{\pp}}^{m,n}}\arrow[from=ul, crossing over]
& 
{\Rho_{\bar{\qq}, \pp}^{m,n}}
&
\\ 
&
& 
{
\Rho_{\ba, \pp}^{m,n}
\arrow[ul,color=red]
\arrow[dr] 
\arrow[u,color=red]
} 
& 
& 
{
\Rho_{\qq, \pp}^{m,n}
\arrow[ur,color=red]
\arrow[dl] 
\arrow[u,color=red]
} 
& 
&
\\ 
&
& 
& 
{
\Rho_{ \pp}^{m,n}
} 
& 
&
&
\end{tikzcd}
\caption{An atlas for the reduced pinhole camera, cf.~\cite[Figure 1]{agarwal2022atlas}.
}\label{fig:reduced-atlas}
\end{figure}
The reduced image formation correspondence $\Rhoaqp^{m,n}$ sits at the center of the \emph{reduced atlas} depicted in~\Cref{fig:reduced-atlas}.
Following~\cite{agarwal2022atlas}, we may define the remaining entities in this figure using slices and projections of the reduced image formation correspondence. 
For instance, the varieties $\Rho_{\ba , \pp }^{m,n}$ and $\Rho_{\qq , \pp}^{m,n}$ are defined, respectively, as the image under the coordinate projections $\pi_{\qq  }' : \Rhoaqp^{m,n} \to \left( \PP^3 \right)^m \times \left( \PP^2 \right)^{mn}$, $\pi_{\ba} ' : \left( \PP^3 \right)^n \times \left( \PP^2 \right)^{mn}$ appearing in~\Cref{thm:cw-main-thm}. 
Slicing the variety $\Rho_{\qq , \pp }^{m,n}$ with the coordinate planes defined by $\qq = \bar{\qq},$ we obtain the \emph{reduced resectioning variety} $\Rho_{\bar{\qq}, \pp}^{m,n}.$ 
Up to Zariski closure, this is the \emph{dual reduced joint image} introduced in~\cite{DBLP:conf/cvpr/TragerHP19}.
\Cref{thm:cw-main-thm} above and~\Cref{thm:reduced-scheme-theoretic} below illustrate the correspondence between these varieties and their non-reduced counterparts in~\cite{agarwal2022atlas}, and provide an explanation for the vertical symmetry present in both atlases.
\subsection{Algebraic consequences}\label{subsec:algebraic-consequences}
Readers familiar with multiview geometry will no doubt wonder how the focal constraints of~\Cref{thm:main-ideals} relate to various ``dual multiview constraints", derived by Carlsson, Weinshall, and others.
All of these previously-studied constraints may be interpreted as polynomials vanishing on $\Rho_{\bar{\qq}, \pp}^{m,n}.$
Specializing the $6$-focal constraints 
to $\Rhoaqp^{m,n},$ we obtain
\begin{equation}\label{eq:det-dual-fundamental}
\det \begin{bmatrix}
E_1^\top \otimes I & e_1\\
E_2^\top \otimes I &    & e_2 \\
E_3^\top \otimes I &    &    & e_3 \\
E_4^\top \otimes I &    &   &&  e_4\\
\bar{Q}_{j_1} &  &  &   & & p_{i j_1}\\ 
\bar{Q}_{j_2} & & & &  & & p_{i j_2}
\end{bmatrix} 
=
0,
\quad 
\forall i = 1, \ldots , m, 
\, \, 
1\le j_1 < j_2 \le n.
\end{equation}
Permuting the rows,~\eqref{eq:det-dual-fundamental} implies that
\begin{align}
&\det 
\left[ 
\begin{array}{c|cc}
I_{12 \times 12} & \begin{array}{cccc}
E_1 & & & E_4\\
 & E_2 & & E_4\\
& & E_3 & E_4
\end{array} \\[.9em]
\hline 
\vspace{.9em}
\begin{array}{c}
\qbarcheck{j_1}\\
\qbarcheck{j_2}
\end{array}
& & 
\begin{array}{cc}
p_{i j_1} & \\
 & p_{i j_2}
\end{array}
\end{array}
\right]
= 0\nonumber ,
\end{align} 
and taking the Schur complement, we find
\begin{align}
&\det \left( 
\left[\begin{array}{ccc}
0_{3 \times 4} &p_{i j_1} & \\
0_{3\times 4} & & p_{i j_2}
\end{array}\right]
- 
\left[\begin{array}{c}
\qbarcheck{j_1}\\
\qbarcheck{j_2}
\end{array}\right]
\, 
\left[ 
\begin{array}{cccc}
E_1  & & & E_4\\
 & E_2 & & E_4\\
& & E_3 & E_4
\end{array}
\right]
\right) = \nonumber \\[0.8em]
&\det 
\left[\begin{array}{ccc}
A(\bar{q}_{j_1}) & p_{i j_1} & \\
A(\bar{q}_{j_2}) & & p_{i j_2}
\end{array}\right] = 0. \label{eq:dual-fundamental-1} 
\end{align}
\Cref{eq:dual-fundamental-1} is a bilinear form in $p_{i j_1}$ and $p_{i j_2},$ which may be represented by Carlsson and Weinshall's $3\times 3$ \emph{dual fundamental matrix} (cf.~\cite[eq.~18]{DBLP:journals/ijcv/CarlssonW98}),
\begin{equation}\label{eq:dual-fundamental-2}
\left[ \begin{smallmatrix}
0 & 
\bar{q}_{j_1} [2] \bar{q}_{j_2} [1] 
\det \left[\begin{smallmatrix}
\bar{q}_{j_1} [4] & \bar{q}_{j_1} [3] \\
\bar{q}_{j_2} [4] & \bar{q}_{j_2} [3]
\end{smallmatrix}\right] 
&
\bar{q}_{j_1} [3] \bar{q}_{j_2} [1] 
\det \left[\begin{smallmatrix}
\bar{q}_{j_1} [2] & \bar{q}_{j_1} [4] \\
\bar{q}_{j_2} [2] & \bar{q}_{j_2} [4]
\end{smallmatrix}\right] \\
\bar{q}_{j_1} [1] \bar{q}_{j_2} [2] 
\det \left[\begin{smallmatrix}
\bar{q}_{j_1} [3] & \bar{q}_{j_1} [4] \\
\bar{q}_{j_2} [3] & \bar{q}_{j_2} [4]
\end{smallmatrix}\right] 
& 
0 
& 
\bar{q}_{j_1} [3] \bar{q}_{j_2} [2] 
\det \left[\begin{smallmatrix}
\bar{q}_{j_1} [4] & \bar{q}_{j_1} [1] \\
\bar{q}_{j_2} [4] & \bar{q}_{j_2} [1]
\end{smallmatrix}\right] 
\\
\bar{q}_{j_1} [1] \bar{q}_{j_2} [3] 
\det \left[\begin{smallmatrix}
\bar{q}_{j_1} [4] & \bar{q}_{j_1} [2] \\
\bar{q}_{j_2} [4] & \bar{q}_{j_2} [2]
\end{smallmatrix}\right] 
& 
\bar{q}_{j_1} [2] \bar{q}_{j_2} [3] 
\det \left[\begin{smallmatrix}
\bar{q}_{j_1} [1] & \bar{q}_{j_1} [4] \\
\bar{q}_{j_2} [1] & \bar{q}_{j_2} [4]
\end{smallmatrix}\right] 
& 
0
\end{smallmatrix}
\right].
\end{equation}
This construction yields a total of $m \binom{n}{2}$ bilinear equations vanishing on the reduced joint image $\Rho_{\bar{\qq}, \pp}^{m,n}.$
A similar application of this Schur complement trick to suitably-chosen $7$- and $8$-focals leads to the \emph{dual trifocal and quadrifocal tensors} (cf.~\cite[\S 6.3--6.4]{DBLP:journals/ijcv/CarlssonW98},~\cite[\S 3, 4]{DBLP:conf/cvpr/TragerHP19}).

For a sufficiently generic point configuration $\bar{\qq} \in \left( \PP^3 \right)^n,$ it turns out that the reduced $2$-focals~\eqref{eq:dual-fundamental-1} determine $\Rho_{\bar{\qq}, \pp}^{m,n}$ as a subscheme of $\left(\PP^2\right)^{mn}.$
\Cref{thm:reduced-scheme-theoretic} states precise genericity conditions such that this occurs.
Thus, while equations needed to cut out $\GammaAqp^{m,n}$ in a strong sense have very high degree, only bilinear equations are needed to cut out its quotient $\Rhoaqp^{m,n}$ in a weaker sense.
The essential insight is, via Carlsson-Weinshall duality, that $\Rho_{\bar{\qq}, \pp}^{m,n}$ is simply the direct product of ``ordinary" multiview varieties,
\begin{equation}\label{eq:iso-joint-image}
\Rho_{\bar{\qq}, \pp}^{m,n} = 
\Gamma_{A(\bar{\qq}), \pp}^{n,m} \cong \Gamma_{A(\bar{\qq}), \pp}^{n,1} \times \cdots \times \Gamma_{A(\bar{\qq}), \pp}^{n,1}
\quad 
\text{ where } 
A(\bar{\qq}) = \left(A(\bar{q}_1), \ldots , A(\bar{q}_n) \right).
\end{equation}
A previous result, namely part (3) of~\Cref{thm:multiview-omnibus}, states that the multiview variety of a sufficiently generic camera arrangement is cut out by the bilinear forms in its vanishing ideal.
The Cremona transformation $\mathcal{C}$ allows us to translate these genericity conditions on the cameras $A(\bar{\qq})$ into conditions on the point arrangement $\bar{\qq} \in \left( \PP^2 \right).$
A $4$-nodal cubic surface in $\PP^3$ containing the points $E_1,\ldots E_4$ is given by an equation of the form
\begin{equation}\label{eq:cayley-cubic}
a_1 x_2 x_3 x_4 + a_2 x_1 x_3 x_4 + a_3 x_1 x_2 x_4 + a_4 x_1 x_2 x_3 = 0,
\quad 
[a_1 : a_2 : a_3 : a_4] \in \PP^3.
\end{equation}
If all $a_i$ are nonzero, then such a surface is projectively equivalent to \emph{Cayley's nodal cubic surface}, for which $a_1=a_2=a_3=a_4=1$ and the points $E_1,\ldots , E_4$ comprise the singular locus.
We also allow degenerate cases where one or more $a_i=0$ in~\eqref{eq:cayley-cubic}, in which case the surface degenerates to the union of a plane and a quadric, or the union of three planes.
\begin{theorem}\label{thm:reduced-scheme-theoretic}
    Fix $n$ distinct points $\bar q_1, \dots, \bar q_{n} \in \PP^3 \setminus \{ E_1, E_2, E_3, E_4\} $, $n \geq 2$, such that no four $\bar q_j$ lie on a common 4-nodal cubic surface through $E_1,\ldots E_4.$
    Write $$\bar{\qq} = (\bar{q}_1, \ldots , \bar{q}_{n}, E_1, \ldots , E_4),$$ and $\bar{\qq}'$ for the sub-arrangement of $\bar{\qq}$ obtained by deleting the $E_1, \ldots , E_4$.
    We have a birational equivalence of varieties
    \[
\Gammaqbarp^{m,n+4} \simeq_{\text{\bf Bir}} \Rho_{\bar{\qq} ', \pp}^{m,n} \times (\PGL_3)^m,
    \]
which realizes the reduced resectioning variety $\Rho_{\bar{\qq} ', \pp}^{m,n}$ as a rational quotient of $\Gammaqbarp^{m,n}$ by $(\PGL_3)^m$. 
Additionally, $\Rho_{\bar{\qq} ', \pp}^{m,n}$ is cut out scheme-theoretically by the $m \binom{n}{2}$ bilinear equations~\eqref{eq:dual-fundamental-1}.
\end{theorem}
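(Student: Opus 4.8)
The plan is to prove the two assertions separately. The birational equivalence (and the rational quotient it induces) is a direct reuse of the machinery built for~\Cref{thm:cw-main-thm}; the scheme-theoretic statement is obtained by transporting~\Cref{thm:multiview-omnibus}(3) across Carlsson--Weinshall duality via~\eqref{eq:iso-joint-image}.

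For the birational equivalence I would apply~\Cref{lemma:rat-quot} with $X=\Gammaqbarp^{m,n+4}$ (irreducible, as the closure of the image of the rational resectioning map on $(\PP^{11})^m$); with $G=(\PGL_3)^m$ acting on $X$ by coordinate changes in the images, $(T_1,\dots,T_m)\cdot\pp=(T_i p_{ij})_{i\in[m],\,j\in[n+4]}$, which preserves $X$ since $T_iA_i$ is again a camera realising the transformed tuple; and with $Y=\Rho_{\qqbar',\pp}^{m,n}$ embedded in $X$ by appending $(e_1,e_2,e_3,e_4)$ to the image tuple of each camera. This embedding lands in $X$ because a reduced camera $A(a_i)$ with $A(a_i)\bar q_j\sim p'_{ij}$ automatically sends $E_1,\dots,E_4$ to $e_1,\dots,e_4$. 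For $\mu_G\colon X\dashrightarrow G$ I would take $\mu_G(\pp)=\bigl(T(p_{1,n+1},\dots,p_{1,n+4}),\dots,T(p_{m,n+1},\dots,p_{m,n+4})\bigr)$, where $T$ is the $\PGL_3$-valued map built in the proof of~\Cref{thm:cw-main-thm}. Then $\mu_G$ restricts to the identity on $Y$ since $T(e_1,e_2,e_3,e_4)=\operatorname{id}$, and $\mu_G(g\cdot\pp)=\mu_G(\pp)\,g^{-1}$ follows coordinatewise from~\eqref{eq:S-act}. Hence~\Cref{lemma:rat-quot} yields at once the rational quotient $\ratquo{\Gammaqbarp^{m,n+4}}{(\PGL_3)^m}{\Rho_{\qqbar',\pp}^{m,n}}$ and the birational equivalence $\Gammaqbarp^{m,n+4}\simeq_{\textbf{Bir}}\Rho_{\qqbar',\pp}^{m,n}\times(\PGL_3)^m$. (Equivalently one may restrict the diagram of~\Cref{thm:cw-main-thm} to the slice $\qq=\qqbar$, on which the $\operatorname{Stab}_{\PGL_4}(E_5)$-factor of $\calG_m$ acts trivially because $S(E_1,\dots,E_4)=\operatorname{id}$, using that resectioning from $\qqbar$ is generically faithful since no four of $\bar q_1,\dots,\bar q_n,E_1,\dots,E_4$ are coplanar; the direct route avoids restricting a birational map.)

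For the scheme-theoretic claim I would first record~\eqref{eq:iso-joint-image} as a consequence of~\Cref{thm:cw-main-thm}: the flip $\nu$ together with the fact that the multiview variety of $n$ cameras and $m$ world points factors as the product over world points (constraints for distinct world points involve disjoint image coordinates) identifies $\Rho_{\qqbar',\pp}^{m,n}$, after the permutation of the $mn$ factors of $(\PP^2)^{mn}$ exchanging $p_{ij}$ and $p_{ji}$, with the multiview variety $\Gamma_{A(\qqbar'),\pp}^{n,m}$ of the arrangement $A(\qqbar')=(A(\bar q_1),\dots,A(\bar q_n))$. The same permutation carries the $m\binom n2$ forms~\eqref{eq:dual-fundamental-1} onto the $2$-focals of $A(\qqbar')$: for fixed $i$ and $j_1<j_2$ the form in~\eqref{eq:dual-fundamental-1} is literally $\det\begin{bmatrix}A(\bar q_{j_1})&p_{ij_1}&0\\A(\bar q_{j_2})&0&p_{ij_2}\end{bmatrix}$, the fundamental-matrix constraint between images $j_1$ and $j_2$ of the $i$-th world point. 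It remains to verify the hypotheses of~\Cref{thm:multiview-omnibus}(3) for $A(\qqbar')$ — each $A(\bar q_j)$ of full rank, the centres $\mathcal C(\bar q_1),\dots,\mathcal C(\bar q_n)$ distinct, no four of them coplanar — which one reads off from the hypotheses on $\qqbar'$ through the Cremona involution $\mathcal C$ of~\eqref{eq:cremona}: $A(\bar q_j)$ has full rank exactly when at most one coordinate of $\bar q_j$ vanishes, in which case $\mathcal C(\bar q_j)$ is defined and is the centre of $A(\bar q_j)$; $\mathcal C$ is injective on this locus, being a birational involution; and the $\mathcal C$-preimage of an arbitrary plane in $\PP^3$ is precisely a (possibly degenerate) $4$-nodal cubic surface~\eqref{eq:cayley-cubic} through $E_1,\dots,E_4$, so four of the $\bar q_j$ become coplanar under $\mathcal C$ iff they lie on a common such cubic. (That no $\bar q_j$ lies on a coordinate line is also forced: a point with two vanishing coordinates lies on every cubic~\eqref{eq:cayley-cubic}, hence on a common such cubic together with any three other $\bar q_j$'s once $n\ge4$; the cases $n\le3$ are elementary.) Then~\Cref{thm:multiview-omnibus}(3) says the $2$-focals cut out $\Gamma_{A(\qqbar'),\pp}^{n,m}$ scheme-theoretically, and transporting this back along the coordinate permutation gives the statement for $\Rho_{\qqbar',\pp}^{m,n}$ and the forms~\eqref{eq:dual-fundamental-1}.

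The substantive step, and the main obstacle, is the genericity dictionary of the last paragraph: verifying that the three standard hypotheses of~\Cref{thm:multiview-omnibus}(3), applied to the very special arrangement $A(\qqbar')$, translate exactly into the stated condition on $\qqbar'$. This is what forces the unusual ``$4$-nodal cubic surface through $E_1,\dots,E_4$'' hypothesis rather than a more familiar genericity condition, and it requires care with the degenerate cubics permitted in~\eqref{eq:cayley-cubic} and with the precise indeterminacy locus of $\mathcal C$. Everything else is bookkeeping on top of~\Cref{thm:cw-main-thm},~\Cref{lemma:rat-quot}, and~\eqref{eq:iso-joint-image}.
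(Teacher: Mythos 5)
Your proposal follows exactly the same two-step strategy as the paper: reuse the \Cref{lemma:rat-quot}/\Cref{thm:cw-main-thm} machinery for the rational-quotient and birationality claims, and transport \Cref{thm:multiview-omnibus}(3) across the identification \eqref{eq:iso-joint-image} for the scheme-theoretic claim, translating the genericity hypotheses through the Cremona involution $\mathcal C$. The paper's own proof is terser than yours (roughly five lines), but the route, the key inputs, and the place where the real work lives — the Cremona dictionary — are identical. You correctly flag that dictionary as ``the substantive step,'' and you supply more of the bookkeeping than the paper does.

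One concrete slip to fix: your justification of distinct camera centers, ``$\mathcal C$ is injective on this locus, being a birational involution,'' is false as stated. On the locus where \emph{at most one} coordinate vanishes, $\mathcal C$ is not injective: the entire open coordinate hyperplane $\{x_4 = 0\}$ (away from lower strata) is contracted to $E_4$, and likewise for the other three. A birational involution is only injective on the open set where it and its inverse are both regular, i.e.\ where \emph{no} coordinate vanishes. Your observation about cubics already contains the correct fix, though: if $\bar q_j$ and $\bar q_k$ both have the same single vanishing coordinate $x_\ell$, then both lie on every cubic \eqref{eq:cayley-cubic} with $a_\ell=0$, a $\PP^2$ of cubics, which meets the two hyperplanes of cubics through any further pair $\bar q_{l},\bar q_{l'}$, so $\{\bar q_j,\bar q_k,\bar q_{l},\bar q_{l'}\}$ lie on a common cubic — ruled out by hypothesis once $n\ge 4$. (The paper's own proof is also cavalier here: it asserts ``$\bar q_i$ is not in the span of any three $E_j$'' without deriving it from the stated hypothesis.) Finally, for completeness: \Cref{thm:multiview-omnibus}(3) literally demands that the centers ``do not lie in a common plane,'' which no configuration of $n\le 3$ points can satisfy; both you and the paper pass over the cases $n=2,3$, where the conclusion must instead be sourced from the classical two- and three-view theory.
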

\begin{proof}
The statements involving rational quotients follow similarly as in~\Cref{thm:cw-main-thm}.
Under the isomorphism~\eqref{eq:iso-joint-image}, the bilinear constraints in the theorem statement are the usual $2$-focals vanishing on the multiview variety.
We recall from part (3) of~\Cref{thm:multiview-omnibus} that the $2$-focals cut out the multiview variety scheme-theoretically whenever the camera centers are distinct and do not lie on a common plane.
Now, since $\bar{q}_i $ is not in the span of any three $E_j,$ the center of the camera $A(\bar{q}_i)$ is given by the Cremona transformation $\mathcal{C} (\bar{q}_i)$.
Since $\mathcal{C}$ maps any plane in $\PP^3$ to a $4$-nodal cubic surface, and vice-versa, we are done. 
\end{proof}
From the practitioner's point of view, the genericity assumptions of~\Cref{thm:reduced-scheme-theoretic}, as well as the implicit assumption that we can fix four fiducial 3D points and their images to the standard positions~\eqref{eq:standard-position}, may be quite reasonable.
This is supported by the experiments of~\cite[\S 5]{DBLP:conf/cvpr/TragerHP19}, suggesting some potential uses of Carlsson-Weinshall duality in SfM settings.

\section{Proof of~\Cref{thm:main-ideals}}\label{sec:ideals}
Our proof of~\Cref{thm:main-ideals} follows the general strategy used in the proof of~\cite[Theorem 3.2]{agarwal2022atlas}, but requires some nontrivial modifications.

\begin{remark}\label{remark:m-equals-one}
Unlike triangulation, resectioning is an interesting problem even for $m=1$ camera.
In fact, most of the work needed to prove~\Cref{thm:main-ideals} involves the special case $m=1$. 
As in the proof of~\Cref{prop:set-theoretic}, we fix $m=1$ and write $p_i$ in place of $p_{1 i}.$
We also write $\foc \left( \bullet \right)$ in place of $\Ifoc{1} \left( \bullet \right),$ and $\Gammaqbarp$ instead of $\Gammaqbarp^{1,n}.$
Finally, let us recall the variety $\Gamma_{\Qbar, \pp}^{n,m} \subset \left( \PP^2 \right)^{mn}$ introduced in~\Cref{prop:set-theoretic}.
In place of $\Gamma_{\Qbar, \pp}^{n,1},$ we simply write $\Gamma_{\Qbar, \pp}.$
\end{remark}

\subsection{Proof outline and preliminary facts}\label{subsec:outline}

To begin, we describe our proof strategy at a high level.
The main steps of our proof can be understood via the diagram in Figure \ref{fig:proof-schematic},
with each of the steps (1)--(4) explained below.
\begin{figure}[h]
\centering 
    \begin{tikzcd}[row sep = huge]
        & \textit{structured} 
        & \phantom{x         } 
        & \textit{generic} 
        \\[-2.5em]
        \textit{focal ideals} 
        & \foc (\bar\bQ) \arrow[rr, "(1)"', "\mathbf H \cdot", crossing over] \arrow[d, equal, "(4)"'] 
        & \phantom{hi} & \foc (\bar \bB)
        \arrow[d, equal, "(3)"]\\
        \textit{vanishing ideals} 
        & I(\Gammaqbarp) & \phantom{y} & I(\Gamma_{\bar\bB, \pp}) \arrow[ll, "(2)"', "\mathbf H^{-1} \cdot", crossing over] \vspace{.5em}
    \end{tikzcd}
    \caption{Schematic outline of the proof of~\Cref{thm:main-ideals}.}\label{fig:proof-schematic}
\end{figure}
\begin{enumerate}
    \item[(1)] \textbf{Coordinate change to obtain a generic hypercamera arrangement from the structured one.}
    For $\bar q_1, \dots, \bar q_n \in \PP^3$ with no four coplanar, 
    we establish in~\Cref{lemma:make-minor-generic,lem:rowspan-uniform-2} that there exist $3 \times 3$ invertible matrices $H_1, \dots, H_n$ such that the transformed hypercamera arrangement $$\bar{\bB} := (H_1 \bar Q_{1}  , \ldots, H_n \bar Q_{n})$$ is \emph{minor-generic} in the sense of~\Cref{def:minor-generic}.
    Applying the coordinate change $\mathbf H = (H_1, \dots, H_n)$ to $(\PP^2)^n$ reduces the study of $\foc (\qqbar)$ for the structured arrangement $\bar\bQ $ to that of $\foc (\bar{\bB})$ for the generic $\bar{\bB}.$
    \item[(2)] If we apply the inverse of the coordinate change $\mathbf H^{-1} = (H_1^{-1}, \dots, H_n^{-1})$ from step (1) to $(\PP^2)^n$, we can specialize from $I(\Gamma_{\bar \bB, \pp})$ to $I(\Gammaqbarp)$:
    \[
        \bar B_j (A) = p_j \quad \iff \quad 
        H_j^{-1} \bar B_j (A) = H_j^{-1} p_j \quad \iff \quad
        \bar Q_{j}(A) = H_j^{-1} p_j.
    \]
    \item[(3)] \textbf{Show equality of focal and vanishing ideals in the generic case.}
    By the previous two steps, it is sufficient to show 
    $I_{\operatorname{foc}}(\bar \bB) =
    I(\Gamma_{\bar\bB, \pp})$.
    We establish this using Gr\"obner bases, as described in~\Cref{subsec:gb-tools}.
    \item[(4)] \textbf{Show equality of focal and vanishing ideals in the structured case.} Combine steps (1)--(3).
\end{enumerate}
For the first step in the proof outline, we need the following definition.
\begin{definition}\label{def:minor-generic}
We say the hypercamera arrangement $\bar{\bB} = (\bar{B}_1, \ldots , \bar{B}_n) \in \left( \PP^{35} \right)^n$ is \emph{minor-generic} if all $12 \times 12$ minors of the $12 \times 3n$ matrix $\left( \bar{B}_1^\top \mid \cdots \mid \bar{B}_n^\top \right)$ are nonzero.
\end{definition}
This is a direct analogue of the genericity condition in~\Cref{thm:multiview-omnibus}, part (1).
We also need the following result.
Let $\FF$ be a field, and consider $s$ matrices $A_1, \ldots , A_s \in \FF^{M\times N}$. 
We say $A_1, \ldots , A_s$ are \emph{rowspan-uniform} if, for any subset $S\subset [s]$ of size at least $M/N,$  we have
\begin{equation}\label{eq:rowspan}
\displaystyle\sum_{i\in S} \rowspan (A_i) = \FF^N.
\end{equation}
\begin{lemma}\label{lemma:make-minor-generic}
If $A_1, \ldots , A_s \in \FF^{M\times N}$ are rowspan-uniform, then there exists a dense Zariski-open set of matrices $(H_1, \ldots , H_s) \in \GL (\FF^M)^s$ such that the maximal $n\times n$ minors of the the $sM \times N$ matrix
\begin{equation}\label{eq:stacked-transformed-matrix}
\left( 
\begin{array}{c}
H_1 A_1 \\ 
\hline 
\vdots \\
\hline 
H_s A_s
\end{array}
\right)
\end{equation}
are all nonzero.
\end{lemma}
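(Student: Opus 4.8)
The plan is to prove the lemma by establishing, for each maximal minor separately, that it is not identically zero as a function of $(H_1,\dots,H_s)$; since the non-vanishing locus of each minor is Zariski-open in the irreducible variety $\GL(\FF^M)^s$, and a finite intersection of dense open subsets of an irreducible variety is again dense open, this suffices. Write $W_i := \rowspan(A_i) \subseteq \FF^N$. One checks that rowspan-uniformity, together with the bound $\dim W_i \le M$, forces $\dim\!\big(\sum_{i\in T} W_i\big) = \min(|T|\,M,\,N)$ for every $T \subseteq [s]$; in particular each $A_i$ has full row rank $M$, and $sM \ge N$, so that the maximal minors of the stacked matrix~\eqref{eq:stacked-transformed-matrix} are indeed its $N\times N$ minors.

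Next I would translate a choice of $N$ rows of~\eqref{eq:stacked-transformed-matrix} into the data of subsets $R_i \subseteq [M]$—the rows taken from the $i$-th block—with $\sum_i |R_i| = N$; set $d_i := |R_i| \le M$. Writing $h_{i1}^\top,\dots,h_{iM}^\top$ for the rows of $H_i$, the selected rows of $H_i A_i$ are the vectors $h_{ij}^\top A_i$ with $j \in R_i$, and since $A_i$ has full row rank the assignment $h \mapsto h^\top A_i$ is an isomorphism of $\FF^M$ onto $W_i$. Hence the chosen minor is nonzero for some $(H_1,\dots,H_s) \in \GL(\FF^M)^s$ precisely when there exist subspaces $U_i \subseteq W_i$ with $\dim U_i = d_i$ and $\FF^N = \bigoplus_i U_i$: given such $U_i$, pull back a basis of each $U_i$ to $d_i$ linearly independent vectors of $\FF^M$, complete to a basis to define $H_i \in \GL(\FF^M)$, and observe that the resulting selection of rows is a basis of $\FF^N$. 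Because $\GL(\FF^M)$ contains the permutation matrices, this reduction depends only on the sizes $d_i$, not on the chosen $R_i$.

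It remains to produce, for every tuple $(d_i)_{i\in[s]}$ with $0\le d_i\le M$ and $\sum_i d_i = N$, a direct-sum decomposition $\FF^N = \bigoplus_i U_i$ with $U_i \subseteq W_i$ and $\dim U_i = d_i$. I would argue by induction on $s$: choosing any $i_0$ with $d_{i_0} > 0$ and a generic $d_{i_0}$-dimensional subspace $U_{i_0} \subseteq W_{i_0}$, pass to the quotient $\FF^N/U_{i_0} \cong \FF^{N - d_{i_0}}$ and the images $\overline{W_i} := (W_i + U_{i_0})/U_{i_0}$ for $i \ne i_0$. Using the generic-position identity $\dim\!\big(\sum_{i\in T} W_i + U_{i_0}\big) = \min\!\big(\dim\!\big(\sum_{i\in T}W_i\big) + d_{i_0},\ \dim\!\big(\sum_{i\in T\cup\{i_0\}} W_i\big)\big)$ together with the estimate from the first paragraph, a short calculation shows that the $\overline{W_i}$ again form a uniform arrangement: $\dim\!\big(\sum_{i\in T}\overline{W_i}\big) = \min(|T|\,M,\,N - d_{i_0})$ for all $T \subseteq [s]\setminus\{i_0\}$. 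Since $\sum_{i\ne i_0} d_i = N - d_{i_0}$ and the bounds $d_i \le M$ persist, the inductive hypothesis produces $\overline{U_i} \subseteq \overline{W_i}$ with the required dimensions and in direct sum; lifting these to $\FF^N$ and adjoining $U_{i_0}$ gives the decomposition. (Alternatively, the decomposition exists by the matroid union theorem, the needed rank inequality $\min(|T|M,N) + \sum_{i\notin T} d_i \ge N$ following at once from $\sum_i d_i = N$ and $d_i \le M$.)

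Assembling these pieces completes the proof. The step I expect to be the main obstacle is the last one: extracting the full uniform-arrangement estimate $\dim(\sum_{i\in T}W_i) = \min(|T|M,N)$ from the rowspan-uniform hypothesis, and then carrying out either the quotient induction—keeping careful track of how this estimate transforms under passage to $\FF^N/U_{i_0}$—or the matroid-union bookkeeping. By contrast, the reductions in the first two paragraphs are routine linear algebra once the role of the sizes $d_i$ is pinned down.
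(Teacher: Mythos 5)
Your proof is correct (modulo the same implicit hypotheses the paper itself uses) but takes a genuinely different route from the one in the paper, and it is worth contrasting the two. The paper's argument is a direct determinant expansion: it writes each row $H_{\sigma_j}A_{\sigma_j}[i,:]$ as $\sum_\ell H_{\sigma_j}[i,\ell]\,A_{\sigma_j}[\ell,:]$, expands the chosen $N\times N$ minor by multilinearity, and observes that the resulting polynomial in the entries of $(H_1,\dots,H_s)$ has coefficients equal to determinants of selected rows of the $A_i$; rowspan-uniformity is invoked to guarantee that at least one such coefficient is nonzero, so the minor is a nonzero polynomial on $\GL(\FF^M)^s$. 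You instead reformulate the generic nonvanishing of a fixed minor as the existence of a direct-sum decomposition $\FF^N=\bigoplus_i U_i$ with $U_i\subseteq W_i=\rowspan(A_i)$ of prescribed dimensions $d_i$, and then produce such a decomposition by a quotient-by-generic-subspace induction (or, as you note, by Rado's theorem/matroid union). Your reformulation is conceptually cleaner and makes explicit the transversal-matroid structure that the paper's ``for some choice of indices'' step quietly relies on; on the other hand, the paper's expansion is shorter, self-contained, and avoids the careful bookkeeping you correctly flag as the main obstacle in the inductive step. One shared subtlety worth noting: both proofs use, in effect, the strengthening $\dim\bigl(\sum_{i\in T}W_i\bigr)=\min(|T|M,N)$ for all $T$, which you assert as a consequence of rowspan-uniformity. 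That implication holds in the paper's application, where $M$ divides $N$ (there $M=3$, $N=12$), but it can fail when $M\nmid N$ (e.g.\ $M=2$, $N=3$ admits rowspan-uniform arrangements with a rank-deficient $A_i$, for which the lemma's conclusion itself fails). This is a feature of how the lemma is stated rather than a defect of your argument specifically, since the paper's proof relies on the same fact without stating it.
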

We leave the proof of this result to~\Cref{appendix:proofs}.
This result is a direct generalization of~\cite[Lemma 3.6]{idealMultiview}, in the setting of triangulation.
In our setting of resectioning, we take $(M,N) = (3, 12),$ and deduce that we can transform the arrangement $\Qbar $ for suitably generic $\qqbar$ to a minor-generic arrangement $\bar{\bB}$ using the following result. 
\begin{lemma}\label{lem:rowspan-uniform-2}
Suppose that $\qqbar \in (\PP^3)^n$ is a point arrangement such that no four points are coplanar. Then $\Qbar $ is rowspan-uniform.
\end{lemma}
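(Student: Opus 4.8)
The plan is to read everything off from the block (tensor) structure of the special hypercameras $\bar Q_j = I_3 \otimes \bar q_j^\top$, and to reduce the statement to the elementary fact that four non-coplanar points of $\PP^3$ are a basis of $\CC^4$. First I would identify $\CC^{12}$ with $\CC^3 \otimes \CC^4$ in such a way that the $k$-th row of $\bar Q_j$ becomes $(e_k \otimes \bar q_j)^\top$, where $e_1,e_2,e_3$ is the standard basis of $\CC^3$; this is just~\eqref{eq:q-check} read blockwise. It then follows immediately that $\rowspan(\bar Q_j) = \CC^3 \otimes \langle \bar q_j\rangle$, a $3$-dimensional subspace of $\CC^{12}$.

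The next step is to sum these subspaces over a subset $S \subseteq [n]$. By bilinearity of $\otimes$ one computes
\[
\sum_{i \in S} \rowspan(\bar Q_i) \;=\; \sum_{i \in S} \bigl(\CC^3 \otimes \langle \bar q_i \rangle\bigr) \;=\; \CC^3 \otimes \Bigl( \sum_{i \in S} \langle \bar q_i\rangle \Bigr) \;=\; \CC^3 \otimes \operatorname{span}\{\bar q_i : i \in S\},
\]
so this sum equals $\CC^3 \otimes \CC^4 = \CC^{12}$ exactly when the vectors $\{\bar q_i : i \in S\}$ span $\CC^4$. Now recall that, for $3 \times 12$ matrices (so $(M,N)=(3,12)$), rowspan-uniformity of $\Qbar$ in the sense of~\eqref{eq:rowspan} asks precisely that $\sum_{i \in S}\rowspan(\bar Q_i) = \CC^{12}$ for every $S$ with $|S| \ge N/M = 4$. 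Given such an $S$, pick any four indices in it; the hypothesis that no four of $\bar q_1, \dots, \bar q_n$ are coplanar says exactly that the corresponding four vectors are linearly independent in $\CC^4$, hence a basis, so a fortiori $\operatorname{span}\{\bar q_i : i \in S\} = \CC^4$. Combining this with the displayed identity finishes the proof.

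I do not anticipate a genuine obstacle: the argument is pure linear algebra. The only points needing a moment's care are (i) confirming that the relevant size threshold in~\eqref{eq:rowspan} is $N/M = 4$ rather than anything smaller --- a single $\bar Q_j$ has rank only $3 < 12$, so no proper reduction is possible --- and (ii) the standard dictionary between ``four coplanar points of $\PP^3$'' and ``four linearly dependent vectors of $\CC^4$''. Both are routine, so essentially all the work is in assembling the tensor-product identity above cleanly.
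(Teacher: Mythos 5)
Your proof is correct and uses essentially the same decomposition as the paper: writing $\CC^{12}$ as $\CC^3\otimes\CC^4$ is the tensor-product phrasing of the paper's direct-sum identification $\CC^{12}\simeq\CC^4\oplus\CC^4\oplus\CC^4$ with $\rowspan(\bar Q_j)\simeq\rowspan(\bar q_j^\top)^{\oplus 3}$, and both reduce the claim to four noncoplanar points spanning $\CC^4$. Nothing substantive differs.
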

\begin{proof}
For any subset $S \subset [n]$ of size at least $4$, we must show \[\sum_{j \in S} \rowspan(\bar Q_j) = \CC^{12}  .\] 
Noting the compatible direct-sum decompositions
\begin{align*}
\CC^{12} &\simeq \CC^4 \oplus \CC^4 \oplus \CC^4 ,\\ 
\rowspan (\bar{Q}_j ) &\simeq \rowspan (\bar{q}_j^\top) \oplus \rowspan (\bar{q}_j^\top) \oplus \rowspan (\bar{q}_j^\top) ,
\end{align*}
it suffices to observe that any set of four elements from the set $\{ \bar{q}_j \}_{j \in S}$ span $\mathbb{P}^3,$ from our assumption that such a set is noncoplanar.
\end{proof}
\Cref{lem:rowspan-uniform-2} gives us a geometric interpretation of when $\Qbar $ is minor-generic. 
Algebraically, this condition is precisely what we need to obtain the Gr\"{o}bner basis of~\Cref{thm:main-ideals} via a standard specialization argument.
This is the focus of the next subsection.

\subsection{Gr\"{o}bner basis tools}\label{subsec:gb-tools}

To realize $\foc (\qqbar )$ as the specialization of an ideal that is independent of $\qqbar$, we could replace the arrangement $\Qbar$ with $\bB = (B_1, \ldots , B_n)$, where
\begin{equation}\label{eq:Q-matrix}
B_i = 
\left[
\begin{smallmatrix}
B_i [1,1] & 
B_i [1,2] &
B_i [1,3] & 
B_i [1,4] & 
B_i [1,5] & 
B_i [1,6] & 
B_i [1,7] & 
B_i [1,8] & 
B_i [1,9] & 
B_i [1,10] & 
B_i [1,11] & 
B_i [1,12] \\
B_i [2,1] & 
B_i [2,2] &
B_i [2,3] & 
B_i [2,4] & 
B_i [2,5] & 
B_i [2,6] & 
B_i [2,7] & 
B_i [2,8] & 
B_i [2,9] & 
B_i [2,10] & 
B_i [2,11] & 
B_i [2,12] \\
B_i [3,1] & 
B_i [3,2] &
B_i [3,3] & 
B_i [3,4] & 
B_i [3,5] & 
B_i [3,6] & 
B_i [3,7] & 
B_i [3,8] & 
B_i [3,9] & 
B_i [3,10] & 
B_i [3,11] & 
B_i [3,12] 
\end{smallmatrix}\right],
\end{equation}
thereby introducing $36 n$ new indeterminates.
Alternatively, we could replace $\Qbar$ with the symbolic arrangement $\bB^\star = (B_1^\star, \ldots , B_n^\star)$, where
\begin{equation}\label{eq:Qvee-matrix}
B_i^\star  = 
\left[
\begin{smallmatrix}
B_i^\star [1,1] & 
B_i^\star [1,2] &
B_i^\star [1,3] & 
B_i^\star [1,4] & 0 & 0 &0 &0 & 0 & 0 & 0 &0 \\
0 & 0 &0 &0 & B_i^\star [2,1] & 
B_i^\star [2,2] &
B_i^\star [2,3] & 
B_i^\star [2,4] & 0 & 0 & 0 & 0\\
0 & 0 &0 &0 & 0 & 0 & 0 &0 & B_i [3,1]^\star & 
B_i^\star [3,2] &
B_i^\star [3,3] & 
B_i^\star [3,4]
\end{smallmatrix}\right],
\end{equation}
for a total of $12n$ new indeterminates.
For either $\bB$ or $\bB^\star $ and for each $k$ with $6\le k \le m$, we are interested in the determinants of the matrices
\begin{align}
(\bB \, | \, \pp ) [\mathbf{r} ]_\si &= 
\begin{bmatrix} B_{\si_1}[\row{1}, :] &p_{\si_1}[\row{1}] &0&\dots &0\\B_{\si_2}[\row{2}, :] & 0 & p_{\si_2}[\row{2}]&\ddots &0 \\ \vdots & \vdots &\ddots&\ddots &\vdots \\B_{\si_k}[\row{k}, :] &0&\dots&0&p_{\si_k}[\row{k}]
\end{bmatrix}, \label{eq:Q-focal-matrix}
\\[0.5em]
(\bB^\star \, |  \, \pp ) [\mathbf{r} ]_\si &=
\begin{bmatrix} B_{\si_1}^\star[\row{1}, :] &p_{\si_1}[\row{1}] &0&\dots &0\\B_{\si_2}^\star[\row{2}, :] & 0 & p_{\si_2}[\row{2}]&\ddots &0 \\ \vdots & \vdots &\ddots&\ddots &\vdots \\B_{\si_k}^\star[\row{k}, :] &0&\dots&0&p_{\si_k}[\row{k}]
\end{bmatrix},\label{eq:Qvee-focal-matrix} \\
\text{where } 
\mathbf{r} = (\row{1}, \ldots , \row{n}) &\subset [3]^n, \nonumber \\ 
\# \row{1} + \cdots + \# \row{k} &= k , \nonumber \\
\si = \{ \si_1, \ldots , \si_k \} &\subset [n]. \nonumber 
\end{align}
Upon specializing $\bB \to \Qbar$, or $\bB^\star \to \Qbar$, the respective determinants of~\eqref{eq:Q-focal-matrix} or~\eqref{eq:Qvee-focal-matrix} specialize to the same $k$-focal.

\begin{proposition}\label{prop:bump}
Equip either ring $\CC [ \bB , \pp ]$ or $\CC [\bB^\star , \pp ]$ with the $\ZZ^{2n}$-grading defined on generators by $\deg (B_i [j,k] ) = \deg (B_i^\star [j,k] ) = e_i $ and $\deg (p_{i} [j]) = e_{n+i}.$
The polynomial $\det (\bB \, | \, \pp ) [\mathbf{r} ]_\si$ is homogeneous of multidegree 
\begin{equation}\label{eq:multidegree}
\sum_{i=1}^k (\# \mathbf{r}_i - 1)e_{\sigma_i} + e_{n + \sigma_i}.
\end{equation}
The same is true for $\det (\bB^\star \, | \, \pp ) [\mathbf{r} ]_\si,$ provided it is nonzero.
When $k>12,$ we have $\mathbf{r}_j = \{ l \}$ for some $j\in [k]$, $l \in [3],$ and hence
\begin{align}
\det (\bB \, | \, \pp ) [\mathbf{r} ]_\si &= p_{\sigma_j} [l] \cdot \det (\bB \, | \, \pp ) [\mathbf{r}_1, \ldots , \widehat{\mathbf{r}_j}, \ldots , \mathbf{r}_k ]_{\si \setminus \{ j \}} , \label{eq:focal-factor}\\
\det (\bB^\star \, | \, \pp ) [\mathbf{r} ]_\si &= p_{\sigma_j} [l] \cdot \det (\bB^\star \, | \, \pp ) [\mathbf{r}_1, \ldots , \widehat{\mathbf{r}_j}, \ldots , \mathbf{r}_k ]_{\si \setminus \{ j \}} .\nonumber 
\end{align}
\end{proposition}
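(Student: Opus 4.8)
The plan is to analyze the Leibniz expansion of the determinant in terms of the block pattern of the matrices \eqref{eq:Q-focal-matrix} and \eqref{eq:Qvee-focal-matrix}, and then read off all three assertions from that analysis. First I would fix notation on $(\bB \mid \pp)[\mathbf{r}]_\si$: its columns split into $12$ ``$\bB$-columns'' and $k$ ``$\pp$-columns'' $p_{\si_1}, \ldots, p_{\si_k}$, and its rows split into $k$ block rows, the $i$-th supplying the $\#\mathbf{r}_i$ rows of $B_{\si_i}[\mathbf{r}_i, :]$, every $\bB$-entry of which has multidegree $e_{\si_i}$ and whose only nonzero $\pp$-entries lie in the single column $p_{\si_i}$ and have multidegree $e_{n+\si_i}$. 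Since the matrix is square of size $12 + k$ (so $\#\mathbf{r}_1 + \cdots + \#\mathbf{r}_k = 12 + k$), in any generalized diagonal exactly $12$ rows are matched to $\bB$-columns and $k$ to $\pp$-columns. The same description holds verbatim for $(\bB^\star \mid \pp)[\mathbf{r}]_\si$, the only change being the additional zeros prescribed by \eqref{eq:Qvee-matrix}.

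For homogeneity (claims 1 and 2), I would observe that in every nonzero term of the Leibniz expansion the column $p_{\si_i}$, being supported entirely inside block row $i$, is matched to exactly one of that block's $\#\mathbf{r}_i$ rows, while the remaining $\#\mathbf{r}_i - 1$ rows of block $i$ are matched to $\bB$-columns; thus block $i$ contributes the fixed multidegree $(\#\mathbf{r}_i - 1)e_{\si_i} + e_{n+\si_i}$ to that term, and summing over $i$ shows every nonzero term has multidegree \eqref{eq:multidegree}, whence the determinant is homogeneous of that multidegree. The argument applies word for word to each surviving term of $\det(\bB^\star \mid \pp)[\mathbf{r}]_\si$; the sparsity of \eqref{eq:Qvee-matrix} merely kills more generalized diagonals, and can in fact make the whole determinant vanish (for instance when a column block of \eqref{eq:Qvee-matrix} cannot be covered by the chosen rows), which is exactly why claim 2 carries the nonvanishing proviso.

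For the factorization \eqref{eq:focal-factor}, I would begin with a pigeonhole step: if $k > 12$, the $k$ block rows cannot all have $\#\mathbf{r}_i \geq 2$, since that would force at least $2k > 12 + k$ selected rows; hence some block $j$ has $\#\mathbf{r}_j = 1$, say $\mathbf{r}_j = \{\ell\}$ (if instead some $\mathbf{r}_i = \emptyset$, the column $p_{\si_i}$ is identically zero and \eqref{eq:focal-factor} reduces to $0 = 0$). For such a $j$, the column $p_{\si_j}$ of $(\bB \mid \pp)[\mathbf{r}]_\si$ has a single nonzero entry, $p_{\si_j}[\ell]$, in the lone row of block $j$, so Laplace expansion along this column yields $p_{\si_j}[\ell]$ times the minor obtained by deleting that row and that column. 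Inspecting which rows and columns survive --- the blocks $B_{\si_i}[\mathbf{r}_i, :]$ and the columns $p_{\si_i}[\mathbf{r}_i]$ for $i \neq j$ --- identifies this minor with $\det(\bB \mid \pp)[\mathbf{r}_1, \ldots, \widehat{\mathbf{r}_j}, \ldots, \mathbf{r}_k]_{\si \setminus \{j\}}$, giving \eqref{eq:focal-factor} (up to a sign, which plays no role since focals are defined only up to sign, \Cref{def:Ifoc}). The $\bB^\star$ case is identical, as the argument uses only the shape of the $\pp$-columns. None of this presents a genuine obstacle; the only thing needing care is the position/sign bookkeeping in the Laplace expansion and the matching of the complementary minor to the $(k-1)$-focal, both of which are routine.
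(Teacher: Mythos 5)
Your proposal is correct and follows essentially the same path as the paper's proof: pigeonhole to locate a singleton block when $k>12$, and Laplace expansion along the corresponding $p_{\sigma_j}$ column for the factorization. The only cosmetic difference is that you establish homogeneity by a direct analysis of the Leibniz terms, whereas the paper invokes the rescaling calculation of \eqref{eq:rescale} (a torus-action argument); these are two phrasings of the same observation, and your handling of the degenerate $\mathbf{r}_j=\emptyset$ case, which the paper leaves implicit, is a welcome bit of extra care.
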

\begin{proof}
The argument is nearly identical to~\cite[Proposition 3.3]{agarwal2022atlas}.
The multidegree formula~\eqref{eq:multidegree} follows from a calculation analagous to that already given in~\eqref{eq:rescale}.
Since $
12 = \displaystyle\sum_{i=1}^k (\# \row{i} - 1),$
it follows that at most $12$ of the sets $\row{i}$ can contain more than one element. 
If some $\row{j}$ is a singleton, the factorization~\eqref{eq:focal-factor} follows by Laplace expansion.
\end{proof}
We now define four auxiliary ideals.
\begin{definition}\label{def:Q-focal-ideals}
The ideals $\sixB, \mB \subset \CC [\bB , \pp ]$ are those which are generated by all determinants of~\eqref{eq:Q-focal-matrix} for all $k$, respectively, in the ranges $6\le k \le 12,$ and $k=m.$
Similarly, $\sixBstar, \mBstar \subset \CC [\bB^\star , \pp ]$ are generated by all determinants of~\eqref{eq:Qvee-focal-matrix}.
\end{definition}

Each of the four auxiliary ideals in~\Cref{def:Q-focal-ideals} is useful for different reasons.
For example, $\mB $ and $\mBstar$ are the ideals of maximal minors of a \emph{sparse generic matrix} whose nonzero entries are distinct indeterminates.
Thus, $\mB$ and $\mBstar$ belong to the class of \emph{sparse determinantal ideals}, whose structure has been analyzed in several previous works~\cite{giusti, boocher}. 
Most relevant to our work is the result of~\cite{boocher} which directly implies that the $m$-focals form a universal Gr\"{o}bner basis for either of these ideals.

For the other two ideals $\sixB$, $\sixBstar$, we do not know whether or not the focals form universal Gr\"{o}bner bases.
However,~\Cref{prop:gb-six-focals} shows that they \emph{do} form Gr\"{o}bner bases for a class of product orders that allows us to make the necessary specialization argument.

We recall that a \emph{product order} on $\CC [\mathbf{B} , \mathbf{p} ]$ with $\mathbf{B} < \mathbf{p}$ is a monomial order defined by comparing monomials first with some fixed monomial order in $\mathbf{p} $, then breaking any ties with some other monomial order in $\mathbf{B}.$

\begin{proposition}\label{prop:gb-six-focals}
\phantom{f}
\begin{enumerate}
\item The set $G = \{ \det (\bB \, |  \, \pp ) [\mathbf{r}]_\sigma \mid 6\le \# \sigma \le 12\}$ forms a Gr\"{o}bner basis for the ideal $\sixB$ for any product order with $\bB < \pp .$
\item The set $G^\star = \{ \det (\bB^\star \, |  \, \pp ) [\mathbf{r}]_\sigma \mid 6\le \# \sigma \le 12\}$ forms a Gr\"{o}bner basis for the ideal $\sixBstar$ for any product order with $\bB^\star < \pp .$
\end{enumerate}
\end{proposition}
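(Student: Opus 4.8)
The plan is to reduce the statement to a known Gröbner basis result about sparse determinantal ideals by a Schreyer-type argument on $S$-polynomials, exploiting the product order $\bB < \pp$. First I would recall from~\Cref{prop:bump} that every generator $\det(\bB \mid \pp)[\mathbf r]_\sigma$ is multihomogeneous, and that when $\#\sigma > 12$ it factors as $p_{\sigma_j}[l]$ times a smaller focal~\eqref{eq:focal-factor}; this is why it suffices to take $6 \le \#\sigma \le 12$ as generators, since the larger focals lie in the ideal generated by the smaller ones. Then, because the order is a product order with $\bB < \pp$, the leading term of $\det(\bB \mid \pp)[\mathbf r]_\sigma$ with respect to $<$ is $(\text{leading }\pp\text{-monomial}) \cdot (\text{leading }\bB\text{-monomial})$, and the leading $\pp$-monomial is exactly $\prod_{i} p_{\sigma_i}[\text{(chosen rows from }\mathbf r_i)]$ coming from the ``identity-like'' term of the block matrix~\eqref{eq:Q-focal-matrix}, while the $\bB$-part of the leading term is a maximal minor of the relevant $\bB$-submatrix. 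In other words, $\LM_<(\det(\bB\mid\pp)[\mathbf r]_\sigma)$ is (a multiple of) the leading monomial of a maximal minor of the sparse generic matrix $(\bB_1^\top \mid \cdots \mid \bB_n^\top)$, times a squarefree $\pp$-monomial.

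Next I would set up the Buchberger criterion directly. Take two generators $f = \det(\bB\mid\pp)[\mathbf r]_\sigma$ and $g = \det(\bB\mid\pp)[\mathbf r']_{\sigma'}$. If $\sigma \cap \sigma' = \emptyset$, their leading terms are coprime and the $S$-polynomial reduces to zero automatically. If $\sigma \cap \sigma' \ne \emptyset$, the key observation is that the $\pp$-variables appearing in $f$ (resp.\ $g$) are supported on $\{p_{\sigma_i}[*]\}$ (resp.\ $\{p_{\sigma'_i}[*]\}$), each with $\pp$-degree at most one per index in $\sigma$; so the overlap of the leading $\pp$-monomials is controlled. I would argue that $\operatorname{lcm}(\LM f, \LM g)$ factors compatibly, and that the $S$-polynomial $S(f,g)$, after clearing the $\pp$-monomial factors, reduces to an $S$-polynomial of maximal minors of the sparse generic matrix $(\bB_1^\top\mid\cdots\mid\bB_n^\top)$. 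At this point I would invoke the cited result of Boocher~\cite{boocher} (and the earlier analysis in~\cite{giusti}), which guarantees that the maximal minors of a sparse generic matrix form a universal Gröbner basis for the sparse determinantal ideal they generate; hence the reduced $S$-polynomial lies in $\mB$ (equivalently, in $\sixB$ once we bookkeep the $\pp$-factors via~\eqref{eq:focal-factor}) and reduces to zero modulo $G$. The argument for $G^\star$ and $\sixBstar$ is identical, using~\eqref{eq:Qvee-focal-matrix} in place of~\eqref{eq:Q-focal-matrix}; the only change is that $B_i^\star$ has a block-diagonal sparsity pattern, which only makes more leading terms coprime and hence makes the $S$-polynomial reductions easier.

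The main obstacle is the bookkeeping in the overlapping case $\sigma \cap \sigma' \ne \emptyset$: one must show precisely that $S(f,g)$ splits as a $\pp$-monomial times something in the sparse determinantal ideal, using the block structure of~\eqref{eq:Q-focal-matrix} to track which row-selection data $\mathbf r, \mathbf r'$ contribute to the overlap. The cleanest route is probably to mirror the proof of~\cite[Proposition 3.3 or the analogous Gröbner-basis statement]{agarwal2022atlas} almost verbatim, since their block matrices have the same shape; the extra input here is only that $3 \times 12$ (rather than the triangulation sizes) still leaves room for at most $12$ of the $\mathbf r_i$ to be non-singletons, which is exactly~\Cref{prop:bump}. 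I would flag that the product-order hypothesis is essential and cannot be relaxed to an arbitrary monomial order — this is precisely the phenomenon highlighted in~\Cref{ex:hypersurface-case-1}, where $\operatorname{in}_< \ne \operatorname{gin}_<$ — so the proof must genuinely use that ties in $\pp$ are broken by the $\bB$-order and never the reverse.
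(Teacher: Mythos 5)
Your proposal takes a genuinely different route from the paper, but unfortunately it has gaps that are not merely bookkeeping.

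The paper does not run Buchberger on the set $G$ directly. Instead, it builds an ascending chain $J_0 \subset J_1 \subset \cdots \subset J_n$ in which $J_0 = \mB$ is the sparse determinantal ideal of maximal minors of the full $3n \times (12+n)$ focal matrix (this, not the $12\times 3n$ matrix $(\bB_1^\top\mid\cdots\mid\bB_n^\top)$, is the object to which Boocher's universal Gr\"{o}bner basis theorem applies). It then peels off $\pp$-variable factors one index at a time via the factorization~\eqref{eq:focal-factor}, invoking a result of~\cite{agarwal2022atlas} (their Corollary 3.9) which says the Gr\"{o}bner basis property is preserved under this division operation for \emph{well-supported} polynomials and product orders. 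Your plan to reduce an overlapping $S$-polynomial $S(f,g)$ of two $k$-focals to an $S$-polynomial of $12\times 12$ minors of $(\bB_1^\top\mid\cdots\mid\bB_n^\top)$ is not substantiated and does not appear to be correct as stated: the $S$-polynomial of two $(12+k)\times(12+k)$ determinants, which mix $\pp$- and $\bB$-columns, is a general element of $\CC[\bB,\pp]$ and does not cleanly split as a $\pp$-monomial times an element of $\CC[\bB]$; moreover, those $12\times 12$ minors are polynomials in $\bB$ alone and do not lie in $\mB$, so the syllogism ``Boocher $\Rightarrow$ lies in $\mB$'' does not go through.

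The more serious problem is your claim that part (2) is ``identical'' and that the block-diagonal sparsity of $\bB^\star$ ``only makes more leading terms coprime and hence makes the $S$-polynomial reductions easier.'' This is backwards. The paper explicitly observes (see the remarks preceding the proof of~\Cref{prop:gb-six-focals} in~\Cref{appendix:proofs}) that $\det(\bB^\star\mid\pp)[\mathbf r]_\sigma$ is \emph{not} well-supported, because the nonzero coefficients of $\pp$-monomials are forced to have the factored form~\eqref{eq:det-prod}. This breaks the well-supportedness hypothesis on which the whole inductive argument for part (1) rests. For that reason, the paper proves part (2) by a separate lifting trick: extend the product order so the variables of $\bB$ that are set to zero in $\bB^\star$ are weighted last, lift $f\in\sixBstar$ to $\bar f\in\sixB$, apply part (1) to $\bar f$, and transfer the divisibility of leading monomials back. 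Your proposal offers no substitute for this step. You correctly flag that the overlapping-$\sigma$ case is where the work lies, but the route you sketch through it does not close the gap, and the part (2) argument as you describe it would fail.
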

We provide a proof in~\Cref{appendix:proofs}.
A specialization argument applied to the two parts of~\Cref{prop:gb-six-focals} gives, respectively, the two parts of~\Cref{lem:gb-specialized} below.
\begin{lemma}\label{lem:gb-specialized}
For any monomial order on $\CC [\pp ],$ the following hold:
\begin{enumerate}
\item Let $\bf{\bar{B}}$ be a specialization of $\bf{B}$ such that the $\bf{\bar{B}}$ is minor generic. Then the specialized $6-12$ focals form a Gr\"{o}bner basis for the ideal they generate.
\item Let $\Qbar$ be a specialization of $\bf{B}^\star$ derived from a point arrangement $\qqbar \in \left( \PP^3 \right)$ with no four points coplanar. Then the specialized $6-12$ focals form a Gr\"{o}bner basis.
\end{enumerate}

\end{lemma}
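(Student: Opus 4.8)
The engine is the standard principle for specializing Gröbner bases along a product order: if $G$ is a Gröbner basis for an ideal $J\subseteq \CC[\bB,\pp]$ with respect to a product order in which the $\bB$-block sits below the $\pp$-block, with the $\pp$-block ordered by a monomial order $<_\pp$, and if the (uniquely determined) coefficient in $\CC[\bB]$ of the $<_\pp$-largest $\pp$-monomial appearing in each $g\in G$ remains nonzero after a specialization $\bB\mapsto \bar\bB$, then $\{\,g|_{\bB=\bar\bB}\,\}$ is a Gröbner basis, with respect to $<_\pp$, for the ideal it generates in $\CC[\pp]$; the same holds with $\bB$ replaced by $\bB^\star$. (This is the specialization step used in the proof of~\cite[Theorem 3.2]{agarwal2022atlas}.) Since every monomial order $<_\pp$ on $\CC[\pp]$ extends to such a product order, with the order on the $\bB$- or $\bB^\star$-block chosen arbitrarily, \Cref{prop:gb-six-focals} reduces both parts of the lemma to one computation: identify the $\CC[\bB]$-coefficient (resp.\ $\CC[\bB^\star]$-coefficient) of the leading $\pp$-monomial of each symbolic $6$--$12$ focal, and verify it does not vanish under the relevant specialization.

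\textbf{Identifying the leading coefficients.} Fix a symbolic focal $\det(\bB\mid\pp)[\mathbf r]_\si$, and likewise for $\bB^\star$. Each of its $k=\#\si$ many $\pp$-columns is a $\CC$-linear combination of standard basis vectors whose coefficients are the distinct indeterminates $p_{\si_i}[l]$, $l\in\mathbf r_i$. Expanding the determinant multilinearly in these columns writes it as $\sum \pm\bigl(\prod_{i=1}^k p_{\si_i}[l_i]\bigr)\,M_{(l_i)}$, where $(l_i)\in\mathbf r_1\times\cdots\times\mathbf r_k$ and $M_{(l_i)}$ is a signed maximal $12\times 12$ minor of $(\bar B_{\si_1}^\top\mid\cdots\mid \bar B_{\si_k}^\top)$, obtained by deleting the $k$ rows indexed by the $l_i$. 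Because the $\si_i$ are distinct, the monomials $\prod_i p_{\si_i}[l_i]$ are pairwise distinct, so the coefficient of each $\pp$-monomial is a single such minor; in particular the leading coefficient of $\det(\bB\mid\pp)[\mathbf r]_\si$ is one specific $12\times 12$ minor of $(\bar B_1^\top\mid\cdots\mid\bar B_n^\top)$. For $\bB^\star$, the block-sparse shape~\eqref{eq:Qvee-matrix} forces any such minor that is not identically zero to factor as a product $\det A'\,\det B'\,\det C'$ of three $4\times 4$ minors, one drawn from each of the three rows of indeterminates $B^\star_\bullet[1,:],\,B^\star_\bullet[2,:],\,B^\star_\bullet[3,:]$; moreover, within each of $A',B',C'$ the four columns are indexed by four \emph{distinct} elements of $\si$.

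\textbf{Checking non-vanishing.} For part (1): by~\Cref{def:minor-generic}, minor-genericity of $\bar\bB$ asserts precisely that every $12\times 12$ minor of $(\bar B_1^\top\mid\cdots\mid\bar B_n^\top)$ is nonzero, so in particular every leading coefficient survives and the specialization principle applies. For part (2): we restrict to those symbolic focals $\det(\bB^\star\mid\pp)[\mathbf r]_\si$ that are not identically zero (cf.~\Cref{prop:bump}), so their leading coefficient is a nonzero $12\times 12$ minor, hence of the split form $\det A'\,\det B'\,\det C'$. The specialization $\bB^\star\mapsto\Qbar$ acts by $B^\star_j[r,l]\mapsto \bar q_j[l]$ (see~\eqref{eq:q-check}), which turns each of $A',B',C'$ into the $4\times 4$ matrix whose columns are $\bar q_j$ for four distinct indices $j\in\si$; since no four of the $\bar q_j$ are coplanar, each of these determinants, and therefore their product, is nonzero. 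Thus again every leading coefficient survives, and the specialization principle finishes the proof.

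\textbf{The main obstacle.} Part (1) is essentially formal once the specialization principle is available, because minor-genericity is exactly its hypothesis. The real content is part (2): the structured arrangement $\Qbar$ is emphatically \emph{not} minor-generic---most of its $12\times 12$ minors vanish---so one cannot simply invoke a genericity condition on $\Qbar$ itself. What makes it work is the bookkeeping above: the block-sparsity of $\bB^\star$ forces every surviving maximal minor into the product shape $\det A'\,\det B'\,\det C'$, and in each $4\times 4$ factor the columns correspond to distinct world points, so the non-coplanarity hypothesis of~\Cref{thm:main-ideals} is exactly strong enough to keep it nonzero. A secondary subtlety is that one must discard the identically-zero symbolic focals before specializing, so that ``leading coefficient'' is well defined.
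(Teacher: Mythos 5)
Your argument is correct and follows essentially the same route as the paper: both invoke the standard specialization principle for Gröbner bases under a product order with the $\bB$- (resp. $\bB^\star$-) block below $\pp$, appeal to \Cref{prop:gb-six-focals} for the symbolic Gröbner basis, identify the leading $\CC[\bB]$-coefficient of each focal as a $12\times 12$ minor of the stacked hypercamera matrix, and then observe that minor-genericity (part 1) or noncoplanarity via the factorization into three $4\times 4$ determinants as in~\eqref{eq:det-prod} (part 2) keeps these coefficients nonzero after specialization. Your extra care in restricting to the nonzero symbolic $\bB^\star$-focals and spelling out why the surviving minors split with four distinct indices per factor is consistent with, and slightly more explicit than, the paper's phrasing.
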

\begin{proof}For part (1), let $<$ be any product order with $\mathbf{B}<\mathbf{p}$. Then
\begin{equation}
    \text{in}_< \left(\sum_{p^{\alpha_1}<\ldots<p^{\alpha_k}}g_{\alpha_i}(\mathbf{B})\mathbf{p}^{\alpha_i}\right)=\text{in}_<(g_{\alpha_k}(\mathbf{B}))p^{\alpha_k}.
\end{equation}
Standard specialization results for Gr\"{o}bner bases with respect to product orders \cite[Theorem 2, \S 4.7]{CLO} imply that the $\bf{\bar{B}}$ specialized $6-12$ focals form a Gr\"{o}bner basis if each coefficient $g_{\alpha_k}(\bf{\bar{B}})$ is nonzero. Each of these coefficients is a $12\times 12$ minor of $(\bar{B}_1^\top \mid \cdots \mid \bar{B}_n^\top)$. By minor-genericity, none of these coefficients vanish.

Similarly, for part (2), consider any product order $<$ with $\bB^*<\mathbf{p}$. 
In this case, the nonzero coefficients $g_{\alpha_k}(\bB^\star) $ are always products of three $4\times 4$ determinants,
\begin{equation}\label{eq:det-prod}
\displaystyle\prod_{i=1}^3 \det 
\begin{bmatrix}
B_{j_{i, 1}}^\ast [i,1] & B_{j_{i, 1}}^\ast [i,2] & B_{j_{i, 1}}^\ast [i, 3] & B_{j_{i, 1}}^\ast [i,4] \\
B_{j_{i, 2}}^\ast [i,1] & B_{j_{i, 2}}^\ast [i,2] & B_{j_{i, 2}}^\ast [i, 3] & B_{j_{i, 2}}^\ast [i,4] \\
B_{j_{i, 3}}^\ast [i,1] & B_{j_{i, 3}}^\ast [i,2] & B_{j_{i, 3}}^\ast [i, 3] & B_{j_{i, 3}}^\ast [i,4] \\
B_{j_{i, 4}}^\ast [i,1] & B_{j_{i, 4}}^\ast [i,2] & B_{j_{i, 4}}^\ast [i, 3] & B_{j_{i, 4}}^\ast [i,4] 
\end{bmatrix}.
\end{equation}
Our noncoplanarity assumption implies that the $\bB^\star \to \Qbar $  specialization of~\eqref{eq:det-prod} is nonzero.
\end{proof}
Finally, we have the following result on the vanishing ideal of $\Gamma_{\bar{\bB}, \pp}$ when $\bar{\bB}$ is a minor-generic hypercamera arrangement.
See~\Cref{appendix:proofs} for the proof.
\begin{proposition}\label{prop:vanishing-ideal-minor-generic}
For a minor-generic hypercamera arrangement $\bar{\bB}$, we have that 
\[
I(\Gamma_{\bar{\bB}, \pp}) = \foc (\bar{\bB}).
\]
\end{proposition}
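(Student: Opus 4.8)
The plan is to reduce to the single-camera case and then combine the Gröbner-basis results of~\Cref{subsec:gb-tools} with a degeneration argument in the spirit of~\Cref{prop:set-theoretic}. First, since the $i$-th camera only affects the image coordinates $p_{i\bullet}$, the variety $\Gamma_{\bar\bB,\pp}^{n,m}$ is a product of $m$ copies of $\Gamma_{\bar\bB,\pp}^{n,1}$ sitting in disjoint coordinate blocks of $(\PP^2)^{mn}$, and $\Ifoc{m}(\bar\bB)$ is the sum of the corresponding single-camera focal ideals in disjoint variables; so it suffices to prove $\foc(\bar\bB)=I(\Gamma_{\bar\bB,\pp})$ for $m=1$, with notation as in~\Cref{remark:m-equals-one}. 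The inclusion $\foc(\bar\bB)\subseteq I(\Gamma_{\bar\bB,\pp})$ is the easy one: on the dense set of genuine images $p_j=\bar B_j a$ the vector $[a;-1;\dots;-1]$ lies in the kernel of every focal matrix~\eqref{eq:k-focal-matrix}, so the rank bound~\eqref{eq:rank-constraint} holds and each $k$-focal vanishes there, hence (being polynomials) on the whole closure. It remains to prove the reverse inclusion. Throughout I may discard the $k$-focals with $k>12$, which are multiples of smaller ones by~\eqref{eq:focal-factor}, and I may assume $n\ge 6$, since for $n<6$ there are no focals while $\Gamma_{\bar\bB,\pp}=(\PP^2)^n$, so both ideals vanish.

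The reverse inclusion I would deduce from two facts: that $\foc(\bar\bB)$ is radical (in fact saturated with respect to the multigraded irrelevant ideal), and that $\operatorname{V}(\foc(\bar\bB))=\Gamma_{\bar\bB,\pp}$ as sets. For radicality, note by~\Cref{prop:bump} that after specializing $\bar\bB$ to scalars each $k$-focal has $\pp$-multidegree $\sum_i e_{n+\sigma_i}$, i.e.\ it is multilinear in $p_{\sigma_1},\dots,p_{\sigma_k}$; hence every monomial appearing in it — in particular its leading monomial for any monomial order $<$ on $\CC[\pp]$ — is squarefree. By~\Cref{lem:gb-specialized}(1) the $6$–$12$ focals form a Gröbner basis of $\foc(\bar\bB)$ for every such $<$, so $\operatorname{in}_<(\foc(\bar\bB))$ is a squarefree monomial ideal and therefore radical; consequently $\foc(\bar\bB)$ is radical. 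That $\foc(\bar\bB)$ has no extraneous minimal prime containing a full coordinate block — i.e.\ that it is saturated — should also be extracted at this stage, either directly from the structure of this squarefree initial ideal or by matching the multigraded Hilbert function of $\CC[\pp]/\foc(\bar\bB)$, which the Gröbner basis makes computable, against that of the multiview-type variety $\Gamma_{\bar\bB,\pp}$ (cf.\ Li's formula referenced in~\Cref{subsec:results-organization}).

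For the set-theoretic statement, $\Gamma_{\bar\bB,\pp}\subseteq\operatorname{V}(\foc(\bar\bB))$ follows from the easy inclusion above, and for the reverse I would run the degeneration argument from the proof of~\Cref{prop:set-theoretic} with minor-genericity replacing noncoplanarity. Given $\pp\in\operatorname{V}(\foc(\bar\bB))$, all $n$-focals vanish at $\pp$ (lift from $k\le 12$ to $k=n$ via~\eqref{eq:focal-factor}), so the $3n\times(12+n)$ matrix of~\eqref{eq:rank-constraint} is rank-deficient and has a nonzero kernel vector $[a;-\lambda_1;\dots;-\lambda_n]$ with $\bar B_j a=\lambda_j p_j$; as in~\Cref{prop:set-theoretic}, $a\ne 0$ (otherwise all $\lambda_j=0$). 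Minor-genericity says any $12$ rows of the stacked matrix $(\bar B_1^\top\mid\cdots\mid\bar B_n^\top)^\top$ are linearly independent: this forces at most three $\lambda_j$ to vanish (four or more vanishing would give a common kernel vector $a$ for the corresponding $\bar B_j$), and it simultaneously makes the stacked block on the indices $j$ with $\lambda_j=0$ surjective onto its target, so there is $A'$ with $\bar B_j A'=p_j$ for all such $j$. Then $A^{(k)}=a+(1/k)A'$ and $p_j^{(k)}=\bar B_j A^{(k)}$ give, for large $k$, a point of $\Gamma_{\bar\bB,\pp}$ converging to $\pp$ in $(\PP^2)^n$ — for $\lambda_j\ne 0$ the term $\bar B_j a=\lambda_j p_j\ne 0$ dominates, while for $\lambda_j=0$ one has $p_j^{(k)}=(1/k)p_j\sim p_j$ — and since $\Gamma_{\bar\bB,\pp}$ is Zariski-, hence Euclidean-, closed we get $\pp\in\Gamma_{\bar\bB,\pp}$. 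Combining the two facts, $\foc(\bar\bB)$ is radical and saturated with $\operatorname{V}(\foc(\bar\bB))=\Gamma_{\bar\bB,\pp}$, so $\foc(\bar\bB)=I(\Gamma_{\bar\bB,\pp})$ by the Nullstellensatz.

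I expect the main obstacle to be the set-theoretic inclusion $\operatorname{V}(\foc(\bar\bB))\subseteq\Gamma_{\bar\bB,\pp}$ together with its scheme-theoretic refinement: this is where minor-genericity is genuinely load-bearing — the analogue of the hard direction of~\Cref{prop:set-theoretic} — and one must check that minor-genericity supplies precisely the two linear-algebra inputs used (few vanishing $\lambda_j$, surjectivity of the relevant stacked block) and then that no extraneous irrelevant component slips in, for which the Gröbner-basis structure from step~(2) is needed. By contrast, the reduction to $m=1$, the easy inclusion, and the radicality argument (given~\Cref{prop:bump} and~\Cref{lem:gb-specialized}) are essentially routine.
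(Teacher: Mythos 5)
Your overall strategy is the same as the paper's: show that $\foc(\bar{\bB})$ is radical, saturated with respect to the irrelevant ideal of $(\PP^2)^n$, and set-theoretically cuts out $\Gamma_{\bar{\bB},\pp}$, then conclude $\foc(\bar{\bB}) = I(\Gamma_{\bar{\bB},\pp})$ by the multiprojective Nullstellensatz. Your radicality argument (squarefree universal Gr\"obner basis via~\Cref{lem:gb-specialized}) matches the paper exactly, and your reduction to $m=1$ is harmless though largely superfluous since \Cref{remark:m-equals-one} already restricts the notation to one camera. On the set-theoretic inclusion, you are in fact \emph{more} careful than the paper: the paper simply cites~\Cref{prop:set-theoretic}, which is stated and proved only for the structured arrangement $\bar{\bQ}$ under noncoplanarity, whereas you explicitly re-run the degeneration argument for a general minor-generic $\bar{\bB}$, correctly identifying that minor-genericity supplies both the ``at most three $\lambda_j$ vanish'' bound (via triviality of $\bigcap_{\lambda_j=0}\ker\bar B_j$) and the surjectivity needed to construct $A'$. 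That adaptation is valid and fills a small gap in the paper's exposition.

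The genuine gap in your proposal is saturatedness. You acknowledge that it ``should also be extracted at this stage'' but offer only two vague options, neither of which is carried out. The first --- reading saturatedness ``directly from the structure of this squarefree initial ideal'' --- does not follow automatically: a squarefree initial ideal gives radicality, but a radical monomial ideal can still have an irrelevant minimal prime containing a full block $\langle p_j[1], p_j[2], p_j[3]\rangle$, and one must actually rule this out. The second --- matching multigraded Hilbert functions against Li's formula --- would work in principle but is substantially more machinery than the paper needs. The paper instead uses the ascending chain $J_0 \subset J_1 \subset \cdots \subset J_n$ constructed in the proof of~\Cref{prop:gb-six-focals}, where each $J_k$ is obtained from $J_{k-1}$ by the ideal-quotient relations~\eqref{eq:Jk-sat}: $J_k = J_{k-1} : \langle p_k[l]\rangle$ for each $l\in[3]$. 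Since the specialization $\bB \to \bar{\bB}$ preserves the Gr\"obner basis property (again by~\Cref{lem:gb-specialized}), these relations descend to $\foc(\bar{\bB})$, giving $\foc(\bar{\bB}) : \langle p_j[l]\rangle = \foc(\bar{\bB})$ for all $j,l$, which is precisely saturatedness. You should replace your placeholder with this quotient-chain argument; without it the Nullstellensatz conclusion does not yet follow.
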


\subsection{Completing the proof}\label{subsec:proof-main-ideals}

Using the results of the previous sections, we may complete the proof of~\Cref{thm:main-ideals}, following the overall structure presented in~\Cref{fig:proof-schematic}.

We first prove the statement for $m=1$ camera.
Let $\bar{\qq} \in \left(\PP^3 \right)^m$ be a point arrangement with no four points coplanar.
\Cref{lem:rowspan-uniform-2} then implies that the hypercamera arrangement $\bar{\bQ}$ is rowspan-uniform, and thus~\Cref{lemma:make-minor-generic} implies there exist coordinate changes in the images, $\mathbf{H} = (H_1, \ldots , H_m ) \in (\PGL_3)^m$, such that the arrangement $\bar{\bB} = (H_1 \bar Q_{1}, \ldots , H_m \bar Q_{m})$ is minor-generic.
Noting
\begin{equation}\label{eq:focal-transform}
\begin{pmatrix}
\bar Q_{1} & p_1 & &\\
\vdots & & \ddots &\\
\bar Q_{n} & & & p_n
\end{pmatrix}
=
\begin{pmatrix}
H_1^{-1} & &\\
&\ddots &\\
& & H_n^{-1}
\end{pmatrix}
\begin{pmatrix}
\bar{B}_1 & H_1 p_1 & &\\
\vdots & & \ddots &\\
\bar{B}_n & & & H_n p_n
\end{pmatrix},
\end{equation}
we define the isomorphism of multigraded rings \begin{align*}
L_{\mathbf{H}} : \CC [\pp] &\to \CC [\pp] \\
p_i &\to H_i p_i,
\end{align*}
and observe that
\[
\foc (\qqbar) = L_{\mathbf{H}} (\foc (\bar{\bB})) .
\]
To see this, take any focal $f \in \foc (\qqbar )$.
Just as in the proof of~\Cref{lemma:make-minor-generic}, corresponding minor of the focal matrix on the left of~\Cref{eq:focal-transform} may written as a $\mathbb{C}$-linear combination of focals for the arrangement $\bar{\bB}.$ 
Hence the inclusion $\foc (\qqbar)\subset L_{\mathbf{H}} (\foc (\bar{\bB})) $ holds, and the reverse follows similarly.
Thus, we have
\begin{align*}
\foc (\qqbar) &= L_{\mathbf{H}} (\foc (\bar{\bB})) \\
&= L_{\mathbf{H}} ( I(\Gamma_{\bar{\bB}, \pp})) \tag{\Cref{prop:vanishing-ideal-minor-generic}}\\
&= I(\Gamma_{\Qbar , \pp}) \\
&= I(\Gamma_{\qqbar , \pp}) \tag{\Cref{prop:set-theoretic}}. 
\end{align*}
Thus the focals generate $I(\Gamma_{\qqbar , \pp})$.
Moreover,~\Cref{lem:gb-specialized} part (2) implies that they form a universal Gr\"{o}bner basis, which completes the proof when $m=1$.

Finally, if $m>1,$ it suffices to observe that $\Gamma_{\qqbar, \pp}^{m,n}$ is the direct product of varieties $\Gamma_{\qqbar, \pp}$, and hence the vanishing ideals sum.
Moreover, two $k$-focals corresponding to different factors have disjoint support in $\CC [\pp],$ so their S-polynomials reduce to zero for any term order, and we may conclude that the focals form a universal Gr\"{o}bner basis for any number of cameras.

\section{Optimal single-camera resectioning}\label{sec:resectioning}

The results of~\Cref{sec:CWduality} express a duality principle for the exact versions of the camera resectioning and triangulation problems.
A consequence of this duality is that, in a certain sense, resectioning and triangulation are equivalent problems.
However, we should be mindful that this equivalence holds in an idealized setting which assumes that the pinhole camera is exact and there is no measurement noise.
In practice, neither of these assumptions hold.

In this section, we fix a generic point arrangement $\bar{\qq}$ and consider $\Gamma_{\qqbar, \mathbf{p}}^{1,n} \subset (\PP^2)^n$ intersected with the affine chart where $p_i[3]\ne 0$ for all $1\le i \le n.$ 
We denote this affine variety by $X_{\qqbar, n}$.
In other words, for a point arrangement $\qqbar \in \left( \PP^3\right)^n$ such that no four points are coplanar, the affine variety $X_{\qqbar, n}$ is, by~\Cref{prop:set-theoretic}, equal to the closed image of the rational map
\begin{align}
\psi_{\qqbar , n} : \PP^{11} &\dashrightarrow \CC^{2n} \nonumber \\
A &\mapsto \left(
\displaystyle\frac{A[1, :] \bar{q}_1}{ A[3, :] \bar{q}_1},
\displaystyle\frac{A[2, :] \bar{q}_1}{ A[3, :] \bar{q}_1},
\ldots ,
\displaystyle\frac{A[1, :] \bar{q}_n}{ A[3, :] \bar{q}_n},
\displaystyle\frac{A[2, :] \bar{q}_n}{ A[3, :] \bar{q}_n}
\right). \label{eq:psi}
\end{align}
In the resectioning problem, we are given world points $\bar{\qq} = (\bar{q}_1, \ldots , \bar{q}_n)\in (\PP^3)^n$ and pixel values of $n$ corresponding image points, $(\tilde{u}_i, \tilde{v}_i) $ for $i=1, \ldots , n.$
We denote the vector of image measurement data by $\tilde{d}_{uv} = (\tilde{u}_1, \ldots , \tilde{v}_n) \in \CC^{2n}$.
In practice, $\tilde{d}_{uv}$ and $\qqbar$ are both defined over the real numbers.
Our task is to recover a camera $A$ such that 
\begin{equation}\label{eq:exact-resection}
\psi_{\qqbar, n} (A) = \tilde{d}_{uv}.
\end{equation}
In an idealized setting, the pinhole model is exact and there is no measurement noise.
Hence, we can recover $A$ by computing the kernel of the $n$-focal matrix, and we expect a unique solution as soon as $n\ge 6.$
This is the basis of the so-called ``5.5-point" minimal solver.

In practice, the pinhole model is \emph{not} exact and there \emph{is} measurement noise.
Thus, for $n\ge 6,$ we should expect $\tilde{d}_{uv} \notin X_{\qqbar, n}$, meaning that no solution to~\eqref{eq:exact-resection} can exist.
However, we can still consider the following optimization problem: 
\begin{equation}\label{eq:opt-resection}
L_{\tilde{d}_{uv}}(u_1, v_1, \ldots , u_n, v_n) = \displaystyle\sum_{i=1}^n
(u_i - \tilde{u}_i)^2 + (v_i - \tilde{v}_i)^2
\quad 
\text{s.t.}
\quad 
(u_1, \ldots , v_n) \in X_{\qqbar, n}.
\end{equation}
This is essentially the formulation of the optimal resectioning problem that is used in Hartley and Zisserman's classic text~\cite{HZ04}[\S 7.2].
The only minor difference, implicit in their formulation, is that our feasible set $X_{\qqbar, n}$ differs from theirs by a set of measure zero picked up through Zariski closures.
Similar formulations, which make the camera matrix explicit, appear in other sources, eg.~in work of Cifuentes~\cite[Example 6.5]{cifuentes2021convex} who studied sums-of-squares relaxations of this problem.
Hartley and Zisserman refer to the squared Euclidean loss function $L_{\tilde{d}_{uv}}$ as the \emph{geometric error}, and suggest using local methods like Levenberg-Marquardt to optimize it.
Here, we address the complexity of computing the \emph{global minimum} of~\eqref{eq:opt-resection}.

We recall the notion of the \emph{Euclidean distance degree} of an affine variety,~\cite[\S 2]{draisma2016euclidean}.
For $X_{\qqbar, n},$ we denote this quantity by $\ED (X_{\qqbar, n})$. 
Given a generic data point $\tilde{d}_{uv}$, this is the number of critical points of the squared Euclidean loss $L_{\tilde{d}_{uv}}$ restricted to the smooth locus of $X_{\qqbar, n}$.

\begin{conjecture}\label{conjecture:ED}
For all $n\ge 6$ and generic $\qqbar \in (\PP^3)^n,$ we have 
\begin{equation}\label{eq:ED-formula-conjecture}
\ED (X_{\qqbar, n}) = (80/3) n^3 - 368 n^2 + (5068/3) n - 2580.
\end{equation}
\end{conjecture}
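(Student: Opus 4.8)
The plan is to follow the template that Maxim, Rodriguez, and Wang used for the multiview conjecture~\cite{MRW20}, adapted to the ``switched'' geometry of resectioning. Since a camera $A$ is recovered up to scale from the kernel of the $n$-focal matrix as soon as $n\ge 6$, the parametrization $\psi_{\qqbar,n}\colon\PP^{11}\dashrightarrow X_{\qqbar,n}$ is birational, so $X_{\qqbar,n}$ is an $11$-dimensional variety rationally equivalent to $\PP^{11}$. The first step is to resolve the indeterminacy of $\psi_{\qqbar,n}$: each view map $A\mapsto[A[1,:]\bar q_i:A[2,:]\bar q_i:A[3,:]\bar q_i]$ is the linear projection of $\PP^{11}$ away from the center $L_i=\{A\bar q_i=0\}\cong\PP^{8}$, so a common resolution $\widetilde Y$ of all $n$ view maps is obtained by blowing up $\PP^{11}$ along the subspace arrangement $\{L_i\}$ together with the flats of its intersection lattice. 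The noncoplanarity hypothesis on $\qqbar$ forces $L_{i_1}\cap\cdots\cap L_{i_k}$ to be a $\PP^{3(4-k)-1}$ for $k\le 3$ and empty for $k\ge 4$, so this lattice — and hence the combinatorial type of $\widetilde Y$ — is uniform in $n$.

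Second, I would compute the relevant characteristic data of $\overline{X_{\qqbar,n}}$ — its Chern--Schwartz--MacPherson class, equivalently its polar/Chern--Mather degrees — by pushing forward from $\widetilde Y$ along the birational morphism $\widetilde Y\to\overline{X_{\qqbar,n}}\subset(\PP^2)^n$, with the standard corrections for the projective-space exceptional fibers. This is feasible because the blowup centers are linear, the multidegrees of $\overline{X_{\qqbar,n}}$ are all equal to $1$ by~\Cref{thm:main-ideals}, and every intermediate intersection number is a polynomial in $n$ obtained by an inclusion--exclusion over the $n$ view factors — the same mechanism by which~\Cref{prop:set-theoretic} ties the Hilbert function to Li's formula.

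Third, I would feed this into a characteristic-class formula for the Euclidean distance degree valid without the transversality hypothesis used in~\cite{draisma2016euclidean}, namely the refinement employed in~\cite{MRW20}, expressing $\ED(X_{\qqbar,n})$ as a signed combination of $\deg c_{SM}(\overline{X_{\qqbar,n}})$ and of the CSM classes of the loci where $\overline{X_{\qqbar,n}}$ fails to be in general position with respect to the hyperplane at infinity and the isotropic quadric (in each $\PP^2$ factor, the circular points $[1:\pm\sqrt{-1}:0]$). Summing these contributions gives $\ED(X_{\qqbar,n})$ as an explicit polynomial in $n$; the construction forces its degree to be at most $3$, after which the experimental values at $n=6,7,8,9$ pin it down, and one checks agreement with $(80/3)n^3-368n^2+(5068/3)n-2580$.

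The main obstacle is the geometry at infinity in the third step: the resectioning variety is strongly non-transverse both to the isotropic quadric and to the boundary divisors $\{p_i[3]=0\}$ in every image factor — precisely the degeneracy that makes the naive polar-degree formula inapplicable — so the real work is to stratify $\overline{X_{\qqbar,n}}\cap(\text{isotropic}\cup\text{infinity})$ and compute the CSM classes of the strata with their dependence on $n$ made explicit. This is the analogue of the technical heart of~\cite{MRW20}, and the $n$-fold product-like structure of $X_{\qqbar,n}$ made visible by Carlsson--Weinshall duality (\Cref{thm:cw-main-thm}) is the structure one would exploit to carry the computation through.
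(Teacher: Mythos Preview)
The statement you are attempting to prove is labeled a \emph{Conjecture} in the paper, and the paper does not prove it. The only evidence offered is computational: a finite-field symbolic verification of the case $n=6$ in~\Cref{ex:hypersurface-case-2}, together with numerical monodromy heuristics confirming the values in~\Cref{tab:ED-comparison} for $6\le n\le 15$. The paper then explicitly flags the main obstruction to adapting the Maxim--Rodriguez--Wang argument: $X_{\qqbar,n}$ is \emph{singular} for every $n\ge 6$ (already of singular dimension $9$ when $n=6$), whereas the affine multiview variety $X_{\AAbar,m}$ is smooth for generic $\bar{\bA}$, so the smooth Euler-characteristic formula must be replaced by its Euler-obstruction refinement. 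Beyond that remark, no proof is attempted.

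Your proposal is therefore not comparable to a proof in the paper---there is none---and on its own it is a research outline rather than a proof. Two specific gaps deserve mention. First, your assertion that ``the construction forces its degree to be at most $3$'' is not justified: the paper observes that applying the same methods to a \emph{generic} hypercamera arrangement $\bar{\bB}\in(\PP^{35})^n$ would produce a degree-$11$ polynomial, even though the indeterminacy-locus combinatorics (an arrangement of $\PP^8$'s in $\PP^{11}$ with empty $4$-fold intersections) are identical; so the cubic bound must come from the special structure of $\Qbar$ in a way your sketch does not isolate. Second, you propose to exploit Carlsson--Weinshall duality to transport the computation, but the paper cautions that the projective coordinate changes underlying~\Cref{thm:cw-main-thm} and~\Cref{thm:reduced-scheme-theoretic} do \emph{not} preserve Euclidean distance, and that the ED degree of the reduced resectioning variety appears unrelated to $\ED(X_{\qqbar,n})$. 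Your plan is pointed in the right direction, but the hard step---stratifying the singular locus and the non-transverse intersection with the isotropic quadric and boundary, and computing the Euler-obstruction contributions explicitly as functions of $n$---remains entirely open.
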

We return to~\Cref{ex:hypersurface-case-1}, to verify the simplest case of this conjecture. 
\begin{example}\label{ex:hypersurface-case-2}
Consider the resectioning hypersurface $H(u_1, \ldots , v_6) =0$, ie.~\eqref{eq:det-hypersurface} in the chart
\begin{equation}\label{eq:ex-chart}
p_1[3] = p_2 [3] = p_3[3] = p_4[3] = p_5[3] = p_6[3] = 1.\end{equation}
The affine variety $X_{\bar{\qq}, 6} \subset \CC^{12}$ is in fact the cone over a projective variety in $\PP^{11}.$ 
This can be seen from the determinantal representation of $H$ in~\eqref{eq:12-det}.
It follows that $X_{\bar{\qq}, n}$ is singular.
More precisely,  the singular locus of $X_{\bar{\qq}, n}$ has dimension $9.$

Working over the finite field $\mathbb{F} = \mathbb{Z}_{32003}$, we may verify~\Cref{conjecture:ED} with symbolic computation using the computer algebra system Macaulay2~\cite{M2}.
To do so, we draw a $\mathbb{F}$-valued point configuration $\qqbar \in \left( \PP^3 \right)^6$ and data vector $\tilde{d}_{uv} \in \FF^{12}$ uniformly at random.
The critical points of~\eqref{eq:opt-resection} correspond to points $(u_1, \ldots,  v_6) \in X_{\qqbar, 6}$ such that 
\begin{equation}\label{eq:rank-deficient-ED}
\rank  \begin{bmatrix}
u_1 - \tilde{u}_1 & \cdots & v_6 - \tilde{v}_6 \\
\frac{\partial H}{\partial u_1} & \cdots & \frac{\partial H}{\partial v_6}
\end{bmatrix}
\le 1.   
\end{equation}
To remove the singular points on $X_{\bar{\qq}, 6}$ which cause rank-deficiency in~\eqref{eq:rank-deficient-ED}, it is sufficient take the ideal generated by the $2\times 2$ minors of this matrix and $H(u_1, \ldots , v_6)$ and compute its ideal quotient with respect to the ideal $\langle \frac{\partial H}{\partial u_1}, \frac{\partial H}{\partial v_1} \rangle $.
The result of this operation is a zero-dimensional ideal of degree $68.$
Moreover, we may compute that the vanishing locus of this ideal consists of $68$ distinct, nonsingular points on $X_{\bar{\qq}, 6}.$
The number $68$ may be seen as quantifying the intrinsic algebraic difficulty of solving the constrained optimization problem~\eqref{eq:opt-resection}.
This is further reinforced by heuristically computing the Galois/monodromy group of this problem, as in~\cite{galvis1}, which reveals the full symmetric group $S_{68}.$
\end{example}
Our conjectural formula~\eqref{eq:ED-formula-conjecture} is reminiscent of recent results characterizing the Euclidean distance degree of the {\em affine multiview variety} $X_{\AAbar, m}$. 
This can be defined by taking analagous affine charts on the multiview variety $\GammaAbarp^{m,1}$.
Using a topological formula for the ED-degree of a smooth variety, Maxim, Rodriguez, and Wang~\cite{MRW20} proved
\begin{equation}\label{eq:ED-formula-multiview}
\ED (X_{\AAbar, m}) = (9/2) m^3 - (21/2) m^2 + 8 m - 4.
\end{equation}
We compare this formula with ours in~\Cref{tab:ED-comparison}.
We confirmed the entries of this table using numerical monodromy heuristics~\cite{monodromy1}, using both the implementations provided in Macaulay2~\cite{M2} and Julia~\cite{HCJL}.
For these computations, it is advantageous to use the rational parametrization~\eqref{eq:psi} instead of the implicit focal constraints in~\Cref{thm:main-ideals}.

A surprising aspect of~\Cref{conjecture:ED} is that $\ED (X_{\qqbar , n})$ is a polynomial of degree 3 in $n.$
On the other hand, if we were to apply the methods of~\cite{MRW20} to computing the affine ED-degree of the variety $\Gamma_{\bar{\bB }, \qq}^{n,1}$ associated to a \emph{generic} hypercamera arrangement $\bar{\bB} \in \left( \PP^{11} \right)$, this would give instead a polynomial of degree $11.$
This highlights some special properties of the hypercamera arrangement $\Qbar $, and provides contrast with the results of previous sections.
One explanation for this contrast is the fact that the projective coordinate changes used in~\Cref{thm:cw-main-thm,thm:reduced-scheme-theoretic} do not preserve the Euclidean distance.
For similar reasons, the affine ED degree of the reduced resectioning variety, which is the same as $\ED (X_{\AAbar, m})$, appears to be unrelated to that of the general resectioning variety.

\begin{table}
\begin{center}
\begin{tabular}{c|c|c}
$m$ / $n$ & $\ED (X_{\AAbar , m})$ &  $\ED (X_{\qqbar, n})$ \\
\hline 
2 & 6 & --- \\
3 & 47 & --- \\
4 & 148 & --- \\
5 & 336 & --- \\
6 & 638 & 68 \\
7 & 1081 & 360 \\
8 & 1692 & 1036 \\
9 & 2498 & 2256 \\
10 & 3526 & 4180 \\
11 & 4803 & 6968 \\
12 & 6356 & 10780 \\
13 & 8212 & 15776 \\
14 & 10398 & 22116 \\
15 & 12941 & 29960
\end{tabular}
\end{center}
\caption{Euclidean distance degrees for optimal triangulation from $m$ generic cameras (middle column) and optimal resectioning from $n$ 3D points (right.)}\label{tab:ED-comparison}
\end{table}

We close this section by noting one immediate obstacle to proving~\Cref{conjecture:ED}.
As already seen in~\Cref{ex:hypersurface-case-2}, the variety $X_{\qqbar , n}$ for generic data $\bar{\qq}$ is \emph{not smooth} for any $n \ge 6.$
This contrasts with the case of $X_{\AAbar , m}$, which is smooth for a sufficiently generic arrangement of $m\ge 3$ cameras $\bar{\bA}.$
Thus, to prove~\eqref{eq:ED-formula-multiview} with similar techniques, the basic Euler characteristic formulas valid in the smooth case would need to be replaced by their singular counterparts involving Euler obstruction functions, eg.~\cite[Theorem 1.3]{eulerobs}.

\section{Conclusion}\label{sec:conclusion}

In summary, our work takes several first steps in studying the resectioning problem for general projective cameras from the algebro-geometric perspective, with a focus on
Gr\"{o}bner bases, Carlsson-Weinshall duality, and Euclidean distance optimization.
Our discoveries provide many parallels with the already well-studied multiview ideals associated with the triangulation problem.
Still, many open questions remain.

In this paper, we considered resectioning in the setting of general projective cameras.
Returning to the classical P3P problem~\cite{Grunert-1841}, it would be worthwhile to carry out a parallel study in the setting of \emph{Euclidean cameras}, as proposed in~\cite[\S 8.3, Q1]{agarwal2022atlas}.
In view of~\Cref{thm:multiview-omnibus} parts (2)--(3), it is natural to ask: are all $k$-focals for $6\le k \le 12$ are needed to generate $I_m (\qqbar )$ under the noncoplanarity assumption of~\Cref{thm:main-ideals}?
What can we say about $I_m (\qqbar )$ if this noncoplanarity assumption is relaxed?
Using the reduced atlas developed~\Cref{sec:CWduality} to answer more of the open questions in~\cite[\S 8]{agarwal2022atlas} is yet another interesting avenue to pursue.
Our focus on resectioning for linear maps $\mathbb{P}^3 \dashrightarrow \mathbb{P}^2$ was motivated by computer vision. 
However, it would make just as much sense to study resectioning varieties in the context of general projections $\mathbb{P}^N \dashrightarrow \mathbb{P}^M,$~\cite{Li-IMRN}, or even matrix multiplication maps as in~\cite[\S 8.3, Q3]{agarwal2022atlas}.
Finally, we offer~\Cref{conjecture:ED} as a challenge in Euclidean distance degree computation.

\section*{Acknowledgements}
The authors thank Sameer Agarwal, Max Lieblich, and Rekha Thomas for many helpful conversations and suggestions.
TD also thanks Laurentiu Maxim, Jose Rodriguez, and Felix Rydell for helpful discussions related to~\Cref{sec:resectioning}, and acknowledges support from an NSF Mathematical Sciences Postdoctoral Research Fellowship (DMS-2103310).

\bibliographystyle{amsplain}
\bibliography{refs.bib}

\appendix
\section{Miscellaneous Proofs}\label{appendix:proofs}
First, we prove~\Cref{lemma:make-minor-generic}, justifying the coordinate change $\mathbf{H}$ used to prove~\Cref{thm:main-ideals}.
\begin{proof}[Proof of~\Cref{lemma:make-minor-generic}]\label{proof-make-minor-generic}
Consider some maximal minor of the matrix~\eqref{eq:stacked-transformed-matrix}.
We fix the set of indices $\{ \sigma_1, \ldots , \sigma_k \},$ where $1\le \sigma_1 < \sigma_2 < \cdots < \sigma_k \le s $, such that at least one row is taken from the submatrix $H_{\sigma_j} A_{\sigma_j}$ when forming this minor, and let $1 \le i_{j 1}, \ldots , i_{j r_j} \le N$ index the rows that are taken from this submatrix.
We compute this minor using the multilinearity of the determinant:
\begin{align*}
\det \left( \begin{array}{c}
H_{\sigma_1} A_{\sigma_1} [i_{1 1}, :] \\
\hline 
\vdots \\
\hline 
H_{\sigma_k} A_{\sigma_k} [i_{k r_k}, :] \end{array}
\right) &= 
\det \left( \begin{array}{c}
\displaystyle\sum_{l=1}^M H_{\sigma_1} [i_{1 1}, \ell ] A_{\sigma_1} [ \ell , : ] \\ 
\hline 
\vdots \\
\hline 
\displaystyle\sum_{l=1}^M H_{\sigma_k} [i_{k r_k}, \ell ] A_{\sigma_k} [ \ell , : ]
\end{array}
\right) \\
&= \displaystyle\sum_{1 \le \ell_1, \ldots , \ell_M \le M}
\det \left( \begin{array}{c}
A_{\sigma_1} [\ell_{1}, :] \\ 
\hline 
\vdots \\
\hline 
A_{\sigma_k} [\ell_{M}, :] \end{array}
\right)
\cdot 
\left(H_{\sigma_1} [i_{1 1}, \ell_{1}] \cdots H_{\sigma_k} [i_{k r_k}, \ell_{M}] \right).
\end{align*}
We think of this minor as a polynomial in the entries of $(H_1, \ldots , H_s).$
Our assumption of rowspan-uniformity 
implies that $A_{\sigma_1} [\ell_1, :], \ldots , A_{\sigma_k} [\ell_M, :]$ form a basis of $\FF^N$ for some choice of indices in the sum above, and hence one of the coefficients of this polynomial is nonzero.
Thus, the equation
\begin{equation}\label{eq:transformed-minor-zero}
\det \left( \begin{array}{c}
H_{\sigma_1} A_{\sigma_1} [i_{1 1}, :] \\ 
\hline 
\vdots \\
\hline 
H_{\sigma_k} A_{\sigma_k} [i_{k r_k}, :] \end{array}
\right) = 0
\end{equation}
defines a hypersurface in the affine space of all $k$-tuples of $M\times M$ matrices $(H_1, \ldots , H_s).$
Taking the union over all such hypersurfaces and those defined by $\det H_i=0$ gives us a proper Zariski-closed set $Z$ in this affine space.
The complement of $Z$ is an open set which satisfies the desired conclusion.
\end{proof}
Next, to prove~\Cref{prop:gb-six-focals}, we recall~\cite[Definition 3.6]{agarwal2022atlas}.
\begin{definition}
Consider a polynomial $f\in \CC [\bB , \pp_{\sigma_1}, \ldots , \pp_{\sigma_k}]$ which is homogeneous of degree $1$ in each group of variables $\pp_{\sigma_i} = \{ p_{\sigma_i} [1], p_{\sigma_i} [2], p_{\sigma_i} [3] \} .$ We say $f$ is \emph{well-supported}  with respect to $\pp_{\sigma_1}, \ldots , \pp_{\sigma_k}$ if, for every choice of variables $p_{\sigma_1} [i_1]\in \pp_{\sigma_1} , \ldots , p_{\si_k} [i_k]\in \pp_{\si_k} $ such that each $p_i [i_j]$ appears in some term of $f$, the monomial $\displaystyle\prod_{j=1}^k p_{\sigma_j} [i_j]$ also appears in $f$ (with nonzero coefficient in $\CC [\bB]$.)
\end{definition}
More intuitively, $f$ is well-supported if its monomial support in $\pp$ is as large as possible given its variable support, or if it has a dense coefficient tensor in $\CC [ \bB ].$
For a product order with $\bB < \pp $, well-supportedness implies that the leading terms of $k$-focals depend only on the relative orderings of variables within each of the groups $\pp_1, \ldots , \pp_n $ and the ordering on $\bB$ (cf.~\cite[Lemma 3.8]{agarwal2022atlas}.)
When $6\le k\le m,$ an argument using Laplace expansion can be used to show that the $k$-focal $\det (\bB \, | \, \pp ) [\mathbf{r} ]_\si$ is well-supported with respect to $\pp_{\sigma_1}, \ldots , \pp_{\sigma_k}.$

In contrast, the $k$-focal $\det (\bB^\star \, | \, \pp ) [\mathbf{r} ]_\si$ is \emph{not} well-supported with respect to $\pp_{\sigma_1}, \ldots , \pp_{\sigma_k},$ since the nonzero coefficients of $\pp$-monomials must have the form~\eqref{eq:det-prod}.

\begin{proof}[Proof of~\Cref{prop:gb-six-focals}]
For part (1), we construct an ascending chain of ideals 
\[
J_0 \subset J_1 \subset \cdots \subset J_n,
\]
where $J_k = \langle G_k \rangle $, and $G_k$ is an inductively-defined Gr\"{o}bner basis with respect to the appropriate class of product orders.
We take $G_0$ to be the set of $m$-focals, so that $J_0$ is a sparse determinantal ideal. 
As previously noted,~\cite[Proposition 5.4]{boocher} implies that $G_0$ is a \emph{universal} Gr\"{o}bner basis.
Having defined $G_k$ for some $k\ge 0,$ we define $G_{k+1}$ to be the set consisting of all polynomials $g$ such that either $g\in G_k$ and is not divisible by any entry of $p_k$ or such that $g\notin G_k$ with $p_k[l] \cdot g \in G_k$ for some $l\in [3].$
\Cref{prop:bump} implies that each $G_k$ may be obtained from $G_0$ by dividing out any entry of the matrices $p_1, \ldots , p_k$ from any $m$-focal containing it as a factor.
When $k=n,$ we obtain $G=G_n$ as the set of all $k$-focals for $6\le k \le 12.$
We claim that each $G_k$ is a Gr\"{o}bner basis for the appropriate class of product orders, and moreover that $J_k$ can be expressed in terms of ideal quotients as
\begin{equation}\label{eq:Jk-sat}
J_k = J_{k-1} : \langle p_k [1] \rangle = J_{k-1} : \langle p_k [2] \rangle = J_{k-1} : \langle p_k [3] \rangle .
\end{equation}
The elements of each $G_k$ are well-supported, and hence by~\cite[Corollary 3.9]{agarwal2022atlas} the Gr\'{o}bner basis property for product orders is preserved.

Having established part (1), we can prove part (2) via an argument used in the proof of ~\cite[Proposition 5.4]{boocher}. If $<$ is any product order with $\mathbf{B}^\star<\mathbf{p}$ then we can extend this to a product order with $\mathbf{B}<\mathbf{p}$ where the entries of $\mathbf{B}$ which are zero in $\mathbf{B^\star}$ are weighted last.

Let $f\in\sixBstar$ be nonzero, so that $f=\sum c_{\sigma , \mathbf{r}} \det (\bB^\star \, |  \, \pp) [\mathbf{r}]_\sigma$ for some coefficients $c_{\sigma , \mathbf{R}} \in \CC [\bB^\star , \pp ] \subset \CC [\bB , \pp].$ 
Consider the lifted polynomial 
\[
\bar{f}=\sum c_{\sigma , \mathbf{r}}(\bB , \pp) \,   \det (\bB \, |  \, \pp ) [\mathbf{r}]_\sigma \in \sixB .
\]
 Our chosen weighting implies that $\text{in}(f)=\text{in}(\bar f)$. 
Part (1) implies $\text{in}(\bar f)$ is divisible by the leading monomial $\bar m_{\sigma , \mathbf{r}}=\text{in}(\det (\bB \, |  \, \pp ) [\mathbf{r}]_\sigma)$ corresponding to some summand above. 
It follows that $\text{in}(f)$ is divisible by $m_{\sigma , \mathbf{r}} =\text{in}(\det (\bB^\star \, |  \, \pp ) [\mathbf{r}]_\sigma)$. This gives part (2).
\end{proof}

\begin{proof}[Proof of~\Cref{prop:vanishing-ideal-minor-generic}]
Having shown the set-theoretic statement in~\Cref{prop:set-theoretic}, it is enough to show that the focal ideal is radical and saturated with respect to the irrelevant ideal of $\left( \PP^2 \right)^n.$ 
Radicality follows from~\Cref{lem:gb-specialized}, since the initial ideal is squarefree. 
For saturatedness, we note that, in the notation of the previous proof, the focal ideal $I (\bar{\bB})$ is the specialization of the ideal $J_m.$
Using~\eqref{eq:Jk-sat} with $k=m$ and the fact that specialization $\bB \to \bar{\bB}$ preserves the Gr\"{o}bner basis property, it follows that $I (\bar{\bB}) : \langle p_k [i] \rangle $ for all $i$ and $j.$
This in turn implies saturatedness with respect to the irrelevant ideal.
\end{proof}
\Addresses
\end{document}